\theoremstyle{plain}
\newtheorem{prop}{Proposition}
\newtheorem{thm}[prop]{Theorem}
\newtheorem{cor}[prop]{Corollary}
\newtheorem{fact}[prop]{Fact}
\newtheorem{ques}{Question}
\newtheorem*{thmA}{Theorem A}
\newtheorem*{theoB}{Theorem B}
\newtheorem*{theoC}{Theorem C}
\theoremstyle{definition}
\newtheorem*{defi}{Definition}
\theoremstyle{remark}
\newtheorem{rem}[prop]{Remark}
\newtheorem{example}{Example}
\numberwithin{prop}{section}
\numberwithin{example}{section} 
\numberwithin{exer}{subsection}
\numberwithin{claim}{prop}
\numberwithin{step}{prop}
\numberwithin{equation}{section}
\newcommand{\N}{\mathbb{N}}
\newcommand{\Z}{\mathbb{Z}}
\newcommand{\C}{\mathbb{C}}
\newcommand{\R}{\mathbb{R}}
\newcommand{\cuD}{\mathscr{D}}
\newcommand{\cuF}{\mathscr{F}}
\newcommand{\baX}{\bar{X}}
\newcommand{\tiX}{\widetilde{X}}
\newcommand{\der}{\partial}
\newcommand{\tider}{\widetilde{\der}}
\newcommand{\iid}{\mathrm{id}}
\newcommand{\eud}{\mathfrak{d}}
\newcommand{\caO}{\mathcal{O}}
\DeclareMathOperator{\hotimes}{\widehat{\otimes}}
\newcommand{\be}{\mathbf{e}}
\newcommand{\op}{\mathrm{op}}
\newcommand{\caA}{\mathscr{A}}
\newcommand{\whE}{\widehat{E}}
\newcommand{\caR}{\mathscr{R}}
\newcommand{\euS}{\mathscr{S}}
\newcommand{\euP}{\mathscr{P}}
\newcommand{\uomega}{\underline{\omega}}
\newcommand{\ueta}{\underline{\eta}}
\newcommand{\wtM}{\widetilde{\partial}\moM}
\newcommand{\bchi}{\overline{\chi}}
\newcommand{\Bor}{\mathrm{Bor}}
\newcommand{\moM}{\mathscr{M}}
\newcommand{\moQ}{\mathscr{Q}}
\newcommand{\bmoM}{\bar{\moM}}
\newcommand{\bmoQ}{\bar{\moQ}}
\newcommand{\tilpa}{\widetilde{\partial}}
\newcommand{\cl}{\mathrm{cl}}
\newcommand{\euF}{\mathscr{F}}
\newcommand{\beuF}{\bar{\euF}}
\newcommand{\caF}{\mathcal{F}}
\newcommand{\ab}{\mathrm{ab}}
\newcommand{\bphi}{\bar{\phi}}
\newcommand{\ccB}{\mathcal{B}}
\newcommand{\diam}{\mathrm{diam}}
\newcommand{\card}{\mathrm{card}}
\newcommand{\image}{\mathrm{im}}
\newcommand{\argu}{\hbox to 7truept{\hrulefill}}
\newcommand{\cC}{\mathrm{C}}
\newcommand{\bcC}{\widetilde{\mathrm{C}}}
\newcommand{\dC}{\mbox{\reflectbox{\textup{C}}}}
\newcommand{\caB}{\mathscr{B}}
\newcommand{\caT}{\mathscr{T}}
\newcommand{\caN}{\mathscr{N}}
\newcommand{\caS}{\mathscr{S}}
\newcommand{\eps}{\varepsilon}
\begin{document}
\title{Monoids, their boundaries, fractals and $C^\ast$-algebras}
\author{Giulia dal Verme and Thomas Weigel}

\address{G. dal Verme,\ Th. Weigel\\ Dipartimento di Matematica e Applicazioni\\
Universit\`a degli Studi di Milano-Bicocca\\
Ed.~U5, Via R.Cozzi 55\\
20125 Milano, Italy}
\email{giulia.dalverme@unimib.it}
\email{thomas.weigel@unimib.it}

\subjclass[2010]{20M30, 47D03, 28A80}

\begin{abstract}
In this note we establish some connections between the theory
of self-similar fractals in the sense of John E. Hutchinson (cf. \cite{hutch:frac}), and the theory of boundary quotients
of $C^\star$-algebras associated to monoids. Although we must leave several important questions open,
we show that the existence of  self-similar $\moM$-fractals for a given monoid $\moM$, gives rise to examples of $C^\star$-algebras \eqref{eq:Cstarfrac} generalizing the boundary quotients $\partial C^\ast_\lambda(\moM)$ discussed by X.~Li in 
\cite[\S 7,\, p.~71]{Li:sgrp}.
The starting point for our investigations is the observation that the
universal boundary of a finitely $1$-generated monoid carries naturally two
topologies. The fine topology plays a prominent role in the construction of these
boundary quotients. On the other hand, the cone topology can be used to define canonical measures on the attractor of an $\moM$-fractal provided $\moM$ is finitely $1$-generated.
\end{abstract}

\date{\today}

\keywords{Monoids, boundaries, fractals, $C^\ast$-algebras.}

\maketitle

\section{Introduction}
\label{s:intro}
On a monoid $\moM$ (=semigroup with unit $1_{\moM}$)  there is naturally defined a
reflexive and transitive relation ``$\preceq$'', i.e.,  for $\omega,\tau\in\moM$ one defines
$\omega\preceq\tau$  
 if, and only if, there exists $\sigma\in\moM$ satisfying
$\omega=\tau\cdot\sigma$. In particular, one may consider $(\moM,\preceq)$ as a 
{\emph{partially ordered set}}. Moreover, 
if $\moM$ is $\N_0$-graded,
then $(\moM,\preceq)$ is a (noetherian) partially ordered set (see Corollary~\ref{cor:poset}).
Such a poset has a poset completion $i_{\moM}\colon \moM\longrightarrow\bmoM$
(see \S~\ref{ss:compl}), and one defines the {\emph{universal boundary}}
 $\partial\moM$ of $\moM$ by
\begin{equation}
\label{eq:tilpadef}
\partial\moM=(\bmoM\setminus\image(i_\moM))/\!\approx,
\end{equation}
where $\approx$ is the equivalence relation induced  by "$\preceq$"
on $\bmoM\setminus\image(i_\moM)$ (see \S~\ref{ss:compl}).
For several reasons (cf. Theorem A, Theorem B, Theorem C)
one may consider $\partial\moM$ as the natural boundary associated to the monoid $\moM$.
However, it is less clear what topology one should consider. Apart from the cone topology $\caT_c(\bmoM)$ there is another
potentially finer topology $\caT_f(\bmoM)$ which will be called the {\emph{fine topology}}
on $\partial\moM$ (cf. \S~\ref{ss:finetop}), i.e., the identity 
\begin{equation}
\label{eq:idtop}
\iid_{\partial\moM}\colon (\partial\moM, \caT_f(\bmoM))\longrightarrow 
(\partial\moM, \caT_c(\bmoM))
\end{equation} 
is a continuous map. The monoid $\moM$ will be said to be {\emph{$\caT$-regular}}, if 
\eqref{eq:idtop} is a homeomorphism. E.g., finitely generated free monoids are 
$\caT$-regular (cf. Proposition~\ref{prop:T2}, \S~\ref{ss:boundT}). The universal boundary 
$\eth\moM=(\partial\moM,\caT_f(\bmoM)$ with the fine topology can be identified with the
Laca boundary $\whE(\moM)$ of the monoid $\moM$.
This topological space plays an essential role for defining
boundary quotients of $C^\ast$-algebras associated to monoids (cf. \cite[\S~7]{Li:sgrp},
\cite{Lietal:Cstar}). Indeed one has the following (cf. Theorem~\ref{thm:princ}).

\begin{thmA}
The map $\bchi_{\cdot}\colon (\partial \moM,\caT_f(\bmoM))\longrightarrow \whE(\moM)$ 
defined by \eqref{eq:map2} is a homeomorphism.
\end{thmA}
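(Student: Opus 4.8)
The plan is to read off both sides of \eqref{eq:map2} concretely and to exhibit $\bchi_\cdot$ as a continuous open bijection. A point of $\partial\moM$ is an $\approx$-class of elements of $\bmoM\setminus\image(i_\moM)$, and by the construction of the poset completion in \S~\ref{ss:compl} each such element is a filter of the poset $(\moM,\preceq)$. A point of the Laca boundary $\whE(\moM)$ is a nonzero character on the semilattice $E$ whose spectrum is $\whE(\moM)$, carried with the topology of pointwise convergence, i.e.\ the subspace topology induced from a product of copies of $\{0,1\}$. The map $\bchi_\cdot$ sends a boundary point $[\xi]$ to the character recording, for each generator of $E$, whether the corresponding basic right ideal is met by the filter $\xi$.

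First I would check that $\bchi_\cdot$ is well defined on $\approx$-classes. Since $\approx$ is by definition the equivalence relation induced by $\preceq$ on $\bmoM\setminus\image(i_\moM)$, two $\approx$-equivalent ideal points determine the same up-set, hence the same character; thus $\bchi_{[\xi]}$ is independent of the chosen representative, and the same computation read in reverse already yields injectivity of $\bchi_\cdot$.

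The substantial step is surjectivity. Given $\chi\in\whE(\moM)$ I would form the filter $F_\chi=\{\,X\in E:\chi(X)=1\,\}$, translate it into a filter in $(\moM,\preceq)$, and show that it determines a point $\xi_\chi$ of the completion $\bmoM$. The crucial verification is that $\xi_\chi$ does not lie in $\image(i_\moM)$ --- equivalently, that a genuine boundary character is not principal --- so that $[\xi_\chi]\in\partial\moM$ and $\bchi_{[\xi_\chi]}=\chi$. Here the structural hypotheses on $\moM$ should enter, in particular the noetherian property of $(\moM,\preceq)$ recorded in Corollary~\ref{cor:poset}, which guarantees that $F_\chi$ is proper, directed, and of the shape required by the completion.

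For the topological statement I would use that, by its definition in \S~\ref{ss:finetop}, the fine topology $\caT_f(\bmoM)$ is generated by the sets $\{[\xi]:\xi\text{ meets }X\}$ with $X\in E$, and that these are exactly the $\bchi_\cdot$-preimages of the subbasic cylinders $\{\chi:\chi(X)=1\}$ of $\whE(\moM)$. Hence $\bchi_\cdot$ is a continuous bijection carrying a subbasis onto a subbasis, so it is open and therefore a homeomorphism. I expect the main obstacle to be surjectivity together with the requirement that the two topologies agree \emph{exactly} rather than one merely refining the other; once $F_\chi$ is shown to yield a boundary point and the two subbases are identified, continuity in both directions is essentially formal.
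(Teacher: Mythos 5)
Your blueprint is the same as the paper's proof of Theorem~\ref{thm:princ}: show that $\bchi_\cdot$ is a bijection, then observe that the subbasic sets $U_\tau^{\eps}=\{\,\eta\mid\eta(\tau\moM)=\eps\,\}$ of $\whE(\moM)$ pull back exactly to the cones $C_\tau(\bmoM)\cap\partial\moM$ and their complements, which generate $\caT_f(\bmoM)$. Your well-definedness/injectivity step is Proposition~\ref{prop:val}, and your subbasis-matching step is precisely \eqref{eq:map4}; both of these are fine. Where you part ways with the paper is surjectivity: the paper invokes \cite[Lemma~2.3]{Lietal:Cstar} for surjectivity of the map $\chi_\cdot$ defined on all of $\cuD(\N,\moM,\preceq)$, whereas you propose to build a preimage directly from the filter $F_\chi$ of a given character.

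That construction breaks at exactly the step you flag as crucial. Under the paper's definition of $\whE(\moM)$ (\S~\ref{ss:sgrpid}, \S~\ref{ss:lacab}), \emph{every} nonzero character belongs to $\whE(\moM)$, in particular every principal character $\chi_\omega$ with $\chi_\omega(\tau\moM)=1\Leftrightarrow\tau\succeq\omega$, which is $\chi_{c_\omega}$ for the constant sequence $c_\omega$. For such a $\chi$ your filter is the cocone $\dC_\omega$ and the associated point of $\bmoM$ is $i_\moM(\omega)$, which \emph{does} lie in $\image(i_\moM)$; so ``a genuine boundary character is not principal'' is not something you can verify --- it is exactly what fails. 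The noetherian property of Corollary~\ref{cor:poset} works against you here, since it forces $\dC_\omega$ to be finite, hence never of boundary shape. (The same issue is hidden in the paper's own deduction ``$\chi_\cdot$ surjective $\Rightarrow$ $\bchi_\cdot$ surjective'': constant sequences are excluded from $\partial\moM$, so the statement is only tenable if $\whE(\moM)$ is read as the non-principal part, i.e.\ the closure of the maximal characters.) There is a second gap: even for a non-principal $\chi$, producing a point of $\bmoM$ requires extracting a \emph{decreasing sequence}, i.e.\ $D_\chi=\{\,\tau\mid\chi(\tau\moM)=1\,\}$ must be countable and downward directed. Multiplicativity of $\chi$ only gives $\tau_1\moM\cap\tau_2\moM\neq\emptyset$ for $\tau_1,\tau_2\in D_\chi$, not a common lower bound \emph{inside} $D_\chi$, and directedness does not follow from noetherianness: in $\moM=\langle\, x,y,a,b\mid xa=ya,\ xb=yb\,\rangle$ the assignment $\chi(I_A)=1\Leftrightarrow A\supseteq x\moM\cap y\moM$ defines a nonzero character with $D_\chi=\{1,x,y\}$, which is not directed, so this $\chi$ is not in the image of $\chi_\cdot$ at all. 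Securing directedness is what right-LCM-type hypotheses (and the lemma the paper cites) actually provide; without isolating such a hypothesis, your surjectivity argument cannot be completed.
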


\begin{rem}
\label{rem:funct}
(a) By Theorem A, the topological space $(\partial\moM,\caT_f(\bmoM))$ is
totally-disconnected and compact. On the other hand, in general one can only show
that $(\partial\moM,\caT_c(\bmoM))$ is a $T_0$-space. Indeed, if 
$(\partial\moM,\caT_c(\bmoM))$ happens to be Hausdorff, then \eqref{eq:idtop}
is necessarily a homeomorphism and $\moM$ is $\caT$-regular (cf. Proposition~\ref{prop:T2}). 
So from this perspective considering $\caT_f(\bmoM)$ as the natural topology would have
many advantages. However, there are also indications supporting the idea
to consider $\caT_c(\bmoM)$ as the natural topology.

\noindent
(b) If $\phi\colon\moQ\longrightarrow\moM$ is a surjective graded homomorphism of finitely $1$-generated monoids,
then, by construction, $\phi$ induces a surjective, continuous and open map
\begin{equation}
\label{eq:funct1}
\partial\bphi\colon (\partial\moQ,\tau_c(\bmoQ))\longrightarrow(\partial\moM,\tau_c(\bmoM))
\end{equation}
(cf. Proposition~\ref{prop:homs}). This property can be used to establish the following.

\begin{theoB}
Let $\moM$ be a finitely $1$-generated  $\N_0$-graded monoid.
Then $\partial\moM$ carries naturally a Borel probability meausure
\begin{equation}
\label{eq:bor}
\mu_\moM\colon\Bor(\partial\moM)\longrightarrow\R^+\cup\{\infty\}.
\end{equation}
\end{theoB}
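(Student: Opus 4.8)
The plan is to construct $\mu_\moM$ as the push-forward of the canonical Bernoulli measure on the boundary of a free monoid along the functorial map provided by Remark~\ref{rem:funct}(b). Since $\moM$ is finitely $1$-generated, the set $S=\moM_1$ of homogeneous elements of degree $1$ is finite and generates $\moM$; taking this intrinsic $S$, rather than an arbitrary generating set, is what will render the resulting measure canonical. Let $F$ denote the free monoid on the finite alphabet $S$, graded by word length so that $S$ sits in degree $1$, and let $\pi\colon F\longrightarrow\moM$ be the surjective graded homomorphism extending the inclusion $S\hookrightarrow\moM$; it is a surjective graded homomorphism of finitely $1$-generated $\N_0$-graded monoids.

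First I would record the free case. With $n=\card(S)$, the free monoid $F$ is $\caT$-regular (Proposition~\ref{prop:T2}) and its boundary is homeomorphic to the full shift $S^{\N}$, i.e.\ the Cantor set, on which the cone and fine topologies agree. On $\partial F$ there is the standard Bernoulli probability measure $\nu$ determined by requiring that each cone $\partial_\omega F$ (the boundary points having the degree-$d$ word $\omega$ as a prefix) satisfies $\nu(\partial_\omega F)=n^{-d}$. Consistency of these weights under the partition $\partial_\omega F=\bigsqcup_{s\in S}\partial_{\omega s}F$ is exactly the identity $n\cdot n^{-(d+1)}=n^{-d}$, so $\nu$ extends uniquely to a Borel probability measure on $\partial F$ by the Carath\'eodory--Kolmogorov extension theorem.

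Next I would push $\nu$ forward. Applying Remark~\ref{rem:funct}(b) to $\pi$ yields a surjective, continuous and open map $\partial\bar{\pi}\colon(\partial F,\caT_c(\overline{F}))\longrightarrow(\partial\moM,\caT_c(\bmoM))$ as in \eqref{eq:funct1}. Continuity with respect to the cone topologies makes $\partial\bar{\pi}$ Borel measurable, so the set function
\begin{equation*}
\mu_\moM(B)=\nu\bigl((\partial\bar{\pi})^{-1}(B)\bigr),\qquad B\in\Bor(\partial\moM),
\end{equation*}
is well defined. As the push-forward of a probability measure along a measurable map it is a Borel measure, and $\mu_\moM(\partial\moM)=\nu\bigl((\partial\bar{\pi})^{-1}(\partial\moM)\bigr)=\nu(\partial F)=1$, so $\mu_\moM$ is a probability measure, yielding \eqref{eq:bor}.

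The delicate point is not the extension in the free case, which is classical, but the word \emph{naturally}: one must verify that $\mu_\moM$ depends on no auxiliary choice. This is arranged from the start by taking $S=\moM_1$, so that $F$, $\pi$ and hence $\mu_\moM$ are canonically attached to $\moM$. I expect the remaining care to lie in confirming that the cone topology on $\partial F$ is the one for which $\nu$ was defined, and that $(\partial\bar{\pi})^{-1}(\partial_\tau\moM)$ is a countable union of cones of $\partial F$, so that the push-forward is genuinely controlled on the cone basis; both should follow from the explicit description of $\partial\bar{\pi}$ underlying \eqref{eq:funct1}. One may additionally ask whether $\moM\mapsto\mu_\moM$ is compatible with the maps \eqref{eq:funct1}; this is not needed for \eqref{eq:bor} and I would treat it separately.
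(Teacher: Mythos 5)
Your proposal is correct and follows essentially the same route as the paper: the paper also constructs the measure on $\partial\euF_n$ (identified with the boundary of the $n$-regular tree) via Carath\'eodory's extension theorem applied to the cone weights $n^{-|\tau|}$, and then defines $\mu_\moM(A)=\mu\bigl((\partial\phi_\moM)^{-1}(A)\bigr)$ as the push-forward along the boundary map induced by the canonical surjection $\phi_\moM\colon\euF\langle\moM_1\rangle\to\moM$, whose canonicality is exactly your point about choosing $S=\moM_1$.
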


On the other hand the induced mapping $\phi_{\whE}$ is given by a map
\begin{equation}
\label{eq:funct2}
\phi_{\whE}\colon \whE(\moM),\longrightarrow\whE(\moQ),
\end{equation}
(cf. Proposition~\ref{prop:hom}). Hence for the purpose of constructing Borel measures the fine topology seems to be inappropriate.
\end{rem}

Theorem B can be used to define the $C^\ast$-algebra 
\begin{equation}
\label{eq:Cstarmon}
C^\ast(\moM,\mu_\moM)=\langle\, \beta_\omega,\,\beta_\omega^\ast\mid \omega\in\moM\,
\rangle
\subseteq\ccB(L^2(\partial\moM,\C,\mu_{\moM}))
\end{equation}
for every finitely $1$-generated $\N_0$-graded monoid $\moM$,
where $\beta(\omega)$ is the mapping induced by left multiplication with $\omega$
(cf. \S~\ref{ss:caT}). We will show by explicit calculation that for the monoid $\euF_n$, freely generated
by a set of cardinality $n$, the $C^\ast$-algebra
$C^\ast(\euF_n,\mu_{\euF_n})$ coincides with the Cuntz algebra $\caO_n$
(cf. Proposition~\ref{prop:cuntz}),
while for the right-angled Artin monoid $\moM^\Gamma$ associated to the
finite graph $\Gamma$, $C^\ast(\moM^\Gamma,\mu_{\moM^\Gamma})$
coincides with the boundary quotients introduced by Crisp and Laca
in \cite{CL07} (cf. \S~\ref{ss:raam}).
Nevertheless the following more general question remains unanswered.

\begin{ques}
Let $\moM$ be a finitely $1$-generated $\N_0$-graded monoid with the left cancellation property. Then
$C^\ast(\moM,\mu_\moM)$ coincides with the boundary quotient $\partial C_\lambda(\moM)$
defined
by X.~Li in \cite[\S 7]{Li:sgrp}.
\end{ques}

From now on we will assume that the $\N_0$-graded monoid $\moM=\bigcup_{k\in\N_0}\moM_k$ is finitely $1$-generated.
In the context of self-similar fractals in the sense of 
John E. Hutchinson (cf. \cite{hutch:frac}) it will be more natural
to endow $\partial\moM$ with the cone topology.
Let $(X,d)$ be a complete metric space with a left $\moM$-action 
$\alpha\colon\moM\longrightarrow C(X,X)$
by continuous maps. Such a presentation will be said to be {\em{contracting}},
if there exists a positive real number $\delta<1$ such that
\begin{equation}
\label{eq:cont}
d(\alpha(s)(x),\alpha(s)(y))\leq \delta\cdot d(x,y),
\end{equation}
for all $x,y\in X$, $s\in\moM_1$ (cf. \cite[\S~2.2]{hutch:frac}).
For such a metric space $(X,d)$
there exists a unique compact subset $K\subseteq X$ such that
\begin{itemize}
\item[(1)] $K=\bigcup_{s\in\moM_1} \alpha(s)(K)$,
\item[(2)] $K=\cl(\{\,Fix(\alpha(t))\mid t\in \moM\,\}\subseteq X$.
\end{itemize}
Obviously, by definition every map $\alpha(t)\in C(X,X)$ is contracting, and thus
has a unique fixed point $x_t\in X$. For short we call $K=K(\alpha)\subset X$ the {\em{attractor}}
of the representation $\alpha$. One has the following (cf. Proposition~\ref{prop:uniprop}).

\begin{theoC}
Let $\moM=\bigcup_{k\in\N_0}\moM_k$ be a finitely $1$-generated $\N_0$-graded
monoid, let $(X,d)$ be a complete metric space and let
$\alpha\colon\moM\longrightarrow C(X,X)$ be a contracting representation of $\moM$.
Then for any point $x\in X$, $\alpha$ induces a continuous map
\begin{equation*}
\label{eq:attmap}
\kappa_x\colon\partial\moM\longrightarrow K(\alpha).
\end{equation*}
Moreover, if $\moM$ is $\caT$-regular, then $\kappa_x$ is surjective.
\end{theoC}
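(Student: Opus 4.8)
The plan is to define $\kappa_x$ chainwise and to read off all its properties from the contraction estimate. Represent a boundary point $\xi\in\partial\moM$ by a descending chain $1_\moM=\tau_0\succeq\tau_1\succeq\tau_2\succeq\cdots$ with $\tau_k\in\moM_k$ and $\tau_{k+1}=\tau_k\cdot s_{k+1}$, $s_{k+1}\in\moM_1$. Put $C=\max_{s\in\moM_1}d(\alpha(s)(x),x)$, which is finite because $\moM$ is finitely $1$-generated. Since $\alpha(\tau_k)$ is a composition of $k$ maps each contracting by $\delta$, the estimate $d(\alpha(\tau_{k+1})(x),\alpha(\tau_k)(x))\le\delta^{k}\,d(\alpha(s_{k+1})(x),x)\le C\delta^{k}$ shows that $(\alpha(\tau_k)(x))_k$ is Cauchy; as $X$ is complete it converges, and I set $\kappa_x(\xi)=\lim_k\alpha(\tau_k)(x)$. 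First I would check that this limit lies in $K(\alpha)$: choosing any $q\in K(\alpha)$ one has $\alpha(\tau_k)(q)\in\alpha(\tau_k)(K(\alpha))\subseteq K(\alpha)$ and $d(\alpha(\tau_k)(x),\alpha(\tau_k)(q))\le\delta^k d(x,q)\to 0$, so the limit is a limit of points of the closed set $K(\alpha)$. The sets $\alpha(\tau_k)(K(\alpha))$ are nested and compact with $\diam\le\delta^k\diam K(\alpha)\to0$, so $\bigcap_k\alpha(\tau_k)(K(\alpha))$ is the single point $\kappa_x(\xi)$; this identification also shows the value is independent of the chosen representative, since $\approx$-equivalent boundary points give cofinal chains and hence the same nested intersection, so $\kappa_x$ is well defined on $\partial\moM$.

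Continuity with respect to the cone topology is then immediate. A basic cone neighbourhood of $\xi$ is $U_{\tau_k}=\{\,\eta\in\partial\moM\mid \tau_k\ \text{is an initial segment of}\ \eta\,\}$, and by the previous paragraph $\kappa_x(U_{\tau_k})\subseteq\alpha(\tau_k)(K(\alpha))$, a set of diameter at most $\delta^k\diam K(\alpha)$. Hence, given $\eps>0$, any $k$ with $\delta^k\diam K(\alpha)<\eps$ forces $\kappa_x(U_{\tau_k})$ into the $\eps$-ball around $\kappa_x(\xi)$; thus the preimage of every open ball is a union of cones, and $\kappa_x$ is continuous.

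For surjectivity under $\caT$-regularity I would use property~(2). For each $t\in\moM\setminus\moM_0$ the periodic chain $(t^k)_k$ represents a boundary point, and $\kappa_x$ sends it to $\lim_k\alpha(t)^k(x)=\Fix(\alpha(t))$, the unique fixed point of the contraction $\alpha(t)$; hence $\{\Fix(\alpha(t))\mid t\in\moM\}\subseteq\image(\kappa_x)$. Now I invoke Theorem~A together with $\caT$-regularity: the identity \eqref{eq:idtop} is then a homeomorphism, so $(\partial\moM,\caT_c(\bmoM))\cong\whE(\moM)$ is compact, whence $\image(\kappa_x)$ is compact and therefore closed in the metric space $K(\alpha)$. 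A closed set containing all the fixed points must contain their closure, which by property~(2) is exactly $K(\alpha)$; therefore $\kappa_x$ is onto.

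The hard part will be this last step, and specifically its interface with the poset-completion definitions: one must verify that the periodic and, more generally, the branches needed to realise an arbitrary point of $K(\alpha)$ genuinely descend to points of $\partial\moM=(\bmoM\setminus\image(i_\moM))/\!\approx$, and it is here that $\caT$-regularity is indispensable, since it identifies the cone boundary with the full compact Laca boundary $\whE(\moM)$ and thereby guarantees both the compactness used above and that no boundary point is lost under $\approx$. An alternative route to surjectivity avoids fixed points altogether: from property~(1) one gets $K(\alpha)=\bigcup_{\tau\in\moM_k}\alpha(\tau)(K(\alpha))$ for every $k$, so for a given $p\in K(\alpha)$ the prefix-closed tree of those $\tau$ with $p\in\alpha(\tau)(K(\alpha))$ is infinite and, by finite $1$-generation, finitely branching; K\"onig's lemma then yields an infinite branch $\xi$ with $p\in\bigcap_k\alpha(\tau_k)(K(\alpha))=\{\kappa_x(\xi)\}$. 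Either way, the genuine obstacle is not the analysis but checking that the resulting branch names a point of $\partial\moM$.
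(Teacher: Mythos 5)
Your construction of $\kappa_x$ and the continuity argument are essentially the paper's own (Propositions~\ref{prop:frac} and~\ref{prop:leq}): both rest on the Cauchy estimate $d(\alpha(\tau_{k+1})(x),\alpha(\tau_k)(x))\le C\delta^k$ and on cones mapping into sets of diameter $O(\delta^k)$. Your variants are in fact slightly more careful than the paper's: the paper bounds distances by $\delta^{|f(n)|}\diam(X)$ (tacitly assuming $X$ bounded) and proves $\alpha(f)(x)\in K(\alpha)$ via Hutchinson's Hausdorff-metric convergence of $\euS^k(\{x\})$ to $K$, whereas you compare the orbit of $x$ with the orbit of a point $q\in K(\alpha)$ and work with the nested compacta $\alpha(\tau_k)(K(\alpha))$, which needs only compactness of $K(\alpha)$ and finiteness of $\moM_1$. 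One small debt you should record: you assume every class in $\partial\moM$ has a graded representative with $\tau_k\in\moM_k$; this follows from Fact~\ref{fact:strict} together with $1$-generation, by refining each step $f(n+1)=f(n)\sigma_n$ of a strictly decreasing representative through the prefixes of a factorization of $\sigma_n$ into generators.

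Surjectivity is where you genuinely depart from the paper, and both of your routes are sound. The paper (Proposition~\ref{prop:uniprop}) approximates $z\in K(\alpha)$ by points $\alpha(f_n)(x)$, $f_n\in\moM_n$, and then uses $\caT$-regularity through Proposition~\ref{prop:Treg} (compactness of $(\bmoM,\caT_c(\bmoM))$) to extract a cluster point $f\in\partial\moM$ with $[f]\cdot x=z$. Your route via fixed points replaces this delicate cluster-point extraction in a possibly non-metrizable space by the two defining properties of the attractor: periodic chains $(t^k)_k$ put every $\Fix(\alpha(t))$ in the image, compactness makes the image closed, and property (2) finishes. Your K\"onig's-lemma route, using property (1) and the finitely branching tree of those $\tau$ with $p\in\alpha(\tau)(K(\alpha))$, is even stronger: it uses no $\caT$-regularity at all, so it proves surjectivity for every finitely $1$-generated $\N_0$-graded monoid, a genuine strengthening of the statement. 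This also shows that your closing assessment is misplaced: regularity is not the obstacle, and indeed the compactness you invoke is automatic, since $(\partial\moM,\caT_c(\bmoM))$ is the continuous image of the compact space $(\partial\moM,\caT_f(\bmoM))$ by Theorem~A and Fact~\ref{fact:fintop}(b); what regularity adds is only the Hausdorff property. Likewise the deferred verification that your branches name points of $\partial\moM$ is immediate in the graded setting: $|\tau_k|\ge k\to\infty$, so no element of $\moM$ lies below the whole chain, and the chain cannot be $\approx$-equivalent to any constant function.
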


Under the general hypothesis of Theorem C we do not know whether the topological space  $(\partial\moM, \caT_c(\bmoM))$
is necessarily compact (see Question~\ref{ques:comp}).
However, in case that it is, then
Theorem~C offers the interpretation that
$(\partial\moM, \caT_c(\bmoM))$ may be considered as a kind of 
{\em{universal attractor of the
finitely $1$-generated $\N_0$-graded $\caT$-regular monoid $\moM$}. }

\begin{rem}
\label{rem:fracCstar}
Let $\moM$ be a finitely $1$-generated monoid.
Then $\partial\moM$ carries canonically a probability measure $\mu_\moM$
(cf. \S~\ref{ss:caT}).
Thus, by Theorem C, the attractor of the $\moM$-fractal $((X,d),\alpha)$
carries a type of \emph{contact probability measure} $\mu_x=\mu_\circ^{\kappa_x}$
for every point $x\in X$, which is given by
\begin{equation}
\label{eq:kont}
\mu_x(B)=\mu_{\moM}(\kappa_x^{-1}(B)),\qquad B\in\Bor(K).
\end{equation}

By (1), the monoid $\moM$ is acting on $K$, and thus also on
$L^2(K,\C,\mu_x)$ by bounded linear operators $\gamma(\omega)$, $\omega\in\moM$
(cf. \S~\ref{ss:Mfrac})
This defines a $C^\ast$-algebra (cf. \S~\ref{ss:Mfrac})
\begin{equation}
\label{eq:Cstarfrac}
C^\ast(\moM,X,d,\mu_x)=\langle\,\gamma(\omega),\,\gamma(\omega)^\ast\mid\omega\in\moM
\rangle\subseteq\ccB(L^2(K,\C,\mu)).
\end{equation}
\end{rem}

The action of $\partial\moM$ on $K$ factors through an $\tilpa\moM$-action,
and $\tilpa\moM$ is in general different from $(\partial\moM,\caT_c(\bmoM))$
(cf. Remark~\ref{rem:tilno}). Hence it seems plausible that the $C^\ast$-algebras
$C^\ast(\moM,X,d,\mu_x)$ are in general different from $C^\ast(\moM,\mu_\moM)$.
However further investigations seem to be necessary in order to clarify this 
point in more detail.


\section{Posets and their boundaries}
\label{s:poset}
A poset (or partially ordered set)  is a set $X$ together with a 
reflexive and transitive relation $\preceq\colon X\times X\to\{t,f\}$ 
with the property that for all $x,y\in X$
satisfying $x\preceq y$ and $y\preceq x$ follows that $x=y$.
By $\N=\{1,2,\ldots\}$ we denote the set of positive integers,
and by $\N_0=\{0,1,2,\ldots\}$ we denote the set of non-negative integers, i.e.,
$\N_0$ is a commutative monoid.


\subsection{Cones, cocones and intervalls}
\label{ss:cones}
For a poset $(X,\preceq)$ and $\tau, \omega\in X$ the set
\begin{align}
\cC_\omega&=\{\,x\in X\mid x\preceq\omega\,\}\label{eq:cone}\\
\intertext{will be called the {\em cone} defined by $\omega$, and}
\dC_\tau&=\{\,y\in X\mid y\succeq\tau\,\}\label{eq:cocone}\\
\intertext{the {\em cocone} defined by $\tau$. 
For $\tau\preceq\omega$ the set}
[\tau,\omega]&=\dC_\tau\cap\cC_\omega=\{\,x\in X\mid
\tau\preceq x\preceq\omega\,\}
\end{align}
is called the {\em closed intervall} from $\tau$ to $\omega$,
i.e., $[\omega,\omega]=\{\omega\}$.
The poset $(X,\preceq)$ is said to be 
{\em noetherian},
if $\card(\dC_\tau)<\infty$ for all $\tau\in X$.


\subsection{Complete posets}
\label{ss:compo}
For a poset $(X,\preceq)$ let 
\begin{equation}
\label{eq:dec}
\cuD(\N,X,\preceq)=\{\,f\in\cuF(\N,X)\mid \forall n,m\in\N:\ n\leq m\,\Longrightarrow\ 
f(n)\succeq f(m)\,\}
\end{equation}
denote the set of decreasing functions which we will - if necessary - identify with the set of decreasing sequences. A poset $(X,\preceq)$ is said to be {\it complete}, if for all 
$f\in\cuD(\N,X,\preceq)$ there exists an element $z\in X$ such that
\begin{itemize}
\item[(CP$_1$)] $f(n)\succeq z$ for all $n\in\N$, and
\item[(CP$_2$)] if $y\in X$ satisfies $f(n)\succeq y$ for all $n\in\N$, then $z\succeq y$.
\end{itemize}
Note that - if it exists - $z\in X$ is the unique element satisfying (i) and (ii) for 
$f\in\cuD(\N,X,\preceq)$.
As usually, $z=\min(f)$ is called
the {\em minimum} of $f\in\cuD(\N,X,\preceq)$.


\subsection{The poset completion of a poset}
\label{ss:compl}
Let $(X,\preceq)$ be a poset. 
For $u,v \in \cuD(\N,X,\preceq)$ put
\begin{equation}
\label{eq:rel}
u\preceq v\ \Longleftrightarrow \forall n\in\N\ \exists k_n\in\N\colon\ u(k_n)\preceq v(n),
\end{equation}
and put
\begin{equation}
\label{eq:rel2}
u\sim v \Longleftrightarrow \,\, \big[\,(u\preceq v \wedge v\preceq u)
\vee \big(v\preceq u \wedge v=c_m, m=min(u)\big)\,\big],
\end{equation}
where  $c_z\in \cuD(\N,X,\preceq)$, $z\in X$, is given by $c_z(n)=z$ for all $n\in\N$.

Let $\approx$ be the equivalence relation generated by $\sim$ and put $\baX=\cuD(\N,X,\preceq)/\!\approx$.
Then the following properties hold for $(\baX,\preceq)$.

\begin{prop}
\label{prop:comp}
Let $(X,\preceq)$ be a poset.
\begin{itemize}
\item[(a)] The relation $\preceq$ defined by \eqref{eq:rel} is reflexive and transitive.
\item[(b)] For any strictly increasing function $\alpha\colon\N\to\N$ and $u\in\cuD(\N,X,\preceq)$ 
one has $u\approx u\circ\alpha$.
\item[(c)] Define for $[u], [v]\in \baX$ that $[u]\preceq[v]$ if,
and only if, $u\preceq v$. Then $(\baX,\preceq)$ is a poset.
\item[(d)] $(\baX,\preceq)$ is complete.
\end{itemize}
\end{prop}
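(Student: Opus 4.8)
The plan is to dispatch (a) and (b) directly from the definitions, treat the descent of $\preceq$ to $\baX$ in (c), and concentrate the real work on the completeness statement (d). For (a), reflexivity of $\preceq$ on $\cuD(\N,X,\preceq)$ follows by taking $k_n=n$ in \eqref{eq:rel}, since $u(n)\preceq u(n)$; transitivity follows by composing the witnessing indices: if $u\preceq v$ and $v\preceq w$, then given $n$ I pick $l_n$ with $v(l_n)\preceq w(n)$ and then $k_{l_n}$ with $u(k_{l_n})\preceq v(l_n)$, so that $u(k_{l_n})\preceq w(n)$ by transitivity of $\preceq$ on $X$. For (b), I first note that $u\circ\alpha\in\cuD(\N,X,\preceq)$ because $\alpha$ is strictly increasing and $u$ decreasing. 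Since a strictly increasing $\alpha\colon\N\to\N$ satisfies $\alpha(n)\ge n$, the choice $k_n=n$ gives $u(\alpha(n))\preceq u(n)$, i.e. $u\circ\alpha\preceq u$, while the choice $k_n=\alpha(n)$ gives $u\preceq u\circ\alpha$. Hence the first disjunct of \eqref{eq:rel2} applies, $u\sim u\circ\alpha$, and therefore $u\approx u\circ\alpha$.

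For (c) the crux is that $\preceq$ descends to $\baX$, i.e. that $\approx$ is a congruence for $\preceq$: if $u\approx u'$ then $u$ and $u'$ bear the same $\preceq$-relation to every $w\in\cuD(\N,X,\preceq)$. As $\approx$ is generated by $\sim$, it suffices to verify this for a single step $u\sim u'$, and since being $\preceq$-indistinguishable is itself symmetric and transitive it then propagates along the generated equivalence. When $u\sim u'$ holds via the first disjunct ($u\preceq u'\wedge u'\preceq u$) the claim is immediate from the transitivity established in (a). The one case needing care is the second disjunct, where $u'=c_m$ with $m=\min(u)$; here the point to verify is that $m=\min(u)$ forces $u\preceq c_m$ as well (using the characterisation of $\min$ via (CP$_1$) and (CP$_2$)), so that together with the given $c_m\preceq u$ this case is in fact absorbed by the first disjunct. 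This is precisely the place where the intended meaning of $\min$ enters, and I would make it explicit. Granting well-definedness, reflexivity and transitivity of $\preceq$ on $\baX$ are inherited from (a), and antisymmetry is exactly the first disjunct of \eqref{eq:rel2}: if $[u]\preceq[v]$ and $[v]\preceq[u]$ then $u\preceq v$ and $v\preceq u$, whence $u\sim v$ and $[u]=[v]$.

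The substantial part is (d). Given a $\preceq$-decreasing sequence $([u_j])_{j\in\N}$ in $\baX$, so that $u_m\preceq u_j$ whenever $m\ge j$, I must produce $w\in\cuD(\N,X,\preceq)$ whose class is the minimum. The naive guess of a plain diagonal $w(n)=u_n(\,\cdots)$ founders on two requirements pulling in opposite directions: $w$ must be decreasing, yet it must also be cofinally below \emph{every} $u_j$ in order to satisfy (CP$_1$). In a general poset one cannot simply form meets, so the decisive observation is that the $u_j$ are comparable under $\preceq$: for $m\ge j$ one has $u_m\preceq u_j$, and hence below any term $u_m(c)$ and any prescribed $u_j(n)$ with $j\le m$ one can find a common lower term \emph{within the single sequence} $u_m$. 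I would therefore build $w$ term by term by dovetailing over all requirements $(j,n)\in\N\times\N$: keeping the current term in the form $u_m(c)$, to service $(j,n)$ I first pass to $u_{m'}$ with $m'=\max(m,j)$ (legitimate since $u_{m'}\preceq u_m$) and then descend within $u_{m'}$ to a term $u_{m'}(c'')\preceq u_j(n)$ (legitimate since $u_{m'}\preceq u_j$), always choosing indices large enough to keep the sequence decreasing.

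The payoff is that the two completeness axioms then fall out almost formally. Every term of the resulting $w$ is by construction an honest term $u_{m_n}(c_n)$ of one of the sequences, which immediately yields (CP$_2$): if $[y]\preceq[u_j]$ for all $j$, then for each $n$ we have $y\preceq u_{m_n}$, so some $y(k)\preceq u_{m_n}(c_n)=w(n)$, i.e. $y\preceq w$ and $[w]\succeq[y]$. Dovetailing guarantees that requirement $(j,n)$ is met for every pair, i.e. some $w(k)\preceq u_j(n)$, which is exactly $w\preceq u_j$ for every $j$ and gives (CP$_1$). Hence $[w]$ is the minimum of $([u_j])_j$ and $\baX$ is complete. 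I expect the construction of $w$ — reconciling monotonicity with cofinality below every $u_j$ while having no meets available in $X$ — to be the main obstacle, with the comparability $u_m\preceq u_j$ for $m\ge j$ as the key that unlocks it; once $w$ is in hand, the verification of (CP$_1$) and (CP$_2$) is routine.
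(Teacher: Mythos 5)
Parts (a) and (b) of your proposal are correct and essentially the paper's own arguments. Part (d) is correct and is in fact \emph{more} careful than the paper's proof: the paper simply sets $v(n)=u^n(n)$ and asserts that this diagonal is the minimum, which as written is incomplete, since $u^{n+1}\preceq u^n$ only provides \emph{some} index $k$ (possibly much larger than $n+1$) with $u^{n+1}(k)\preceq u^n(n)$, so the plain diagonal need not be decreasing, nor a lower bound of the $u^j$. Your dovetailing construction --- re-indexing inside the sequences so as to keep $w$ decreasing while servicing every requirement $(j,n)$, using that $u_m\preceq u_j$ for $m\geq j$ --- is exactly the repair this sketch needs, and your verifications of (CP$_1$) and (CP$_2$) go through.

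The genuine gap is in (c). Your claim that the second disjunct of \eqref{eq:rel2} is absorbed by the first, i.e.\ that $m=\min(u)$ forces $u\preceq c_m$, is false. Since $\min(u)$ is only the greatest lower bound of the terms of $u$ in the sense of (CP$_1$)/(CP$_2$), the relation $u\preceq c_m$ would require some term $u(k)\preceq m$, hence $u(k)=m$ by antisymmetry in $X$; so it holds only when $u$ \emph{attains} its infimum. The paper's own example following Fact~\ref{fact:compl} is a counterexample: take $X=\N\sqcup\{\infty\}$ with $n\preceq m\Leftrightarrow n\geq m$, and $u(n)=n$. Then $\min(u)=\infty$ and $c_\infty\preceq u$, so $u\sim c_\infty$ by the second disjunct, but $u\not\preceq c_\infty$ because no $u(k)$ satisfies $k\geq\infty$. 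The second disjunct exists precisely to identify a sequence with the constant sequence at its \emph{unattained} infimum --- this is what makes Fact~\ref{fact:compl} true --- so it cannot be reduced to the first. Consequently $\approx$-equivalent sequences need not be $\preceq$-indistinguishable ($c_\infty\preceq c_\infty$ while $u\not\preceq c_\infty$), and your well-definedness argument collapses at exactly the point you flagged. In fairness, the paper's own proof of (c) checks only antisymmetry and is silent on well-definedness; the representative-wise definition is unambiguous on classes in $\partial X$ (such a class contains no constant sequence, so all identifications within it come from the first disjunct, and its members are then mutually $\preceq$-equivalent by your transitivity argument), but on classes in $\image(\iota_X)$ it is genuinely ambiguous, as the example shows. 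So the difficulty you correctly localized cannot be resolved by the absorption argument you propose; it requires either restricting attention to $\partial X$ or modifying the definition of $\preceq$ on $\baX$.
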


\begin{proof}
(a) The relation $\preceq$ is obviously reflexive.
Let $u,v,w\in\cuD(\N,X,\preceq)$, $u\preceq v$, $v\preceq w$.
Then for all $n\in\N$ there exists $h_n, k_n\in\N$ such that 
$u(h_n)\preceq v(k_n)\preceq w(n)$. Thus, $u \preceq w$.

\noindent
(b) Let $u\in\cuD(\N,X,\preceq)$ and let $\alpha\colon\N\to\N$
be a strictly increasing function.
Let $m<n$, $m,n\in\N$. Since $\alpha$ is strictly increasing, $\alpha(m)<\alpha(n)$.
Then there exist $m_0,n_0\in\N$ such that $m_0\le\alpha(m)<\alpha(n)\le n_0$.
Then one has $u(m_0)\succeq u(\alpha(m))\succeq u(\alpha(n))\succeq u(n_0)$.
Thus $u\preceq u\circ\alpha$ and $u\circ\alpha\preceq u$, proving that $u\approx u\circ\alpha$.

\noindent
(c) Let $[u], [v]\in\baX$, $[u]\preceq [v]$ and $[v]\preceq [u]$.
Then, by definition, $u\preceq v$ and $v\preceq u$, and thus $u\approx v$, i.e.,
$[u]=[v]$.

\noindent
(d) Let $\{u^k\}_{k\in\N}\in\cuD(\N,\baX,\preceq)$, i.e., 
$u^k\in\baX$ for all $k\in\N$.
Then one has $u^1\succeq u^2\succeq\dots$ by definition.
Since each $u^k\in\cuD(\N,X,\preceq)$, 
one has $u^k(n)\succeq u^k(m)$ for all $n\le m$, $m,n\in\N$.
We define $v\in\cuD(\N,X,\preceq)$ by $v(n)=u^n(n)$, $n\in\N$.
Then $[v]\in\baX$ is the minimum of $\{u^k\}_{k\in\N}$.
This yields the claim.
\end{proof}

Assigning every element $x\in X$ the equivalence class containing the
constant function $c_x\in\cuD(\N,X,\preceq)$ satisfying
$c_x(n)=x$ for all $n\in\N$ yields a strictly increasing mapping of posets
$\iota_X\colon X\to\baX$. 
From now on $(X,\preceq)$ will be considered as a sub-poset of $(\baX,\preceq)$.
The poset $(\baX,\preceq)$ will be called the 
{\em poset completion of $(X,\preceq)$}.
The following fact is straightforward.

\begin{fact}
\label{fact:compl}
The map $\iota_X$ is a bijection if, and only if, $(X,\preceq)$ is complete.
\end{fact}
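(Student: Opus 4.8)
The plan is to prove Fact~\ref{fact:compl}, namely that $\iota_X\colon X\to\baX$ is a bijection if and only if $(X,\preceq)$ is complete. Since $\iota_X$ is a strictly increasing map of posets (hence injective already), the only substantive content is to characterize when it is \emph{surjective}, and to show this surjectivity is equivalent to completeness.

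First I would record injectivity. If $c_x\approx c_y$ for constant sequences, then tracing through the definitions \eqref{eq:rel}--\eqref{eq:rel2} forces $x\preceq y$ and $y\preceq x$, whence $x=y$ by the antisymmetry axiom of the poset; so $\iota_X$ is always injective, and the entire statement reduces to: $\iota_X$ is surjective $\iff$ $(X,\preceq)$ is complete.

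For the forward direction, assume $\iota_X$ is surjective and let $f\in\cuD(\N,X,\preceq)$ be an arbitrary decreasing sequence. Then $[f]\in\baX$, and by surjectivity $[f]=[c_z]$ for some $z\in X$. I would then verify that this $z$ satisfies the two completeness conditions: from $c_z\preceq f$ one extracts, via \eqref{eq:rel}, that $f(n)\succeq z$ for all $n$ (giving (CP$_1$)); and from $f\preceq c_z$ together with any lower bound $y$ of the $f(n)$, one deduces $z\succeq y$ (giving (CP$_2$)). Thus $z=\min(f)$ exists and $(X,\preceq)$ is complete.

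For the converse, assume $(X,\preceq)$ is complete and let $[u]\in\baX$ be arbitrary, represented by some $u\in\cuD(\N,X,\preceq)$. Completeness yields $z=\min(u)\in X$, and the two conditions (CP$_1$), (CP$_2$) translate directly back into $u\preceq c_z$ and $c_z\preceq u$ in the sense of \eqref{eq:rel}, so that $u\sim c_z$ and hence $[u]=[c_z]=\iota_X(z)$. This shows $\iota_X$ is surjective. The main point to handle with care -- the only place where the somewhat delicate definition \eqref{eq:rel2} of $\sim$ really matters -- is matching the asymmetric second disjunct (involving $v=c_m$, $m=\min(u)$) against the two completeness conditions; once one checks that $z=\min(u)$ is exactly the element witnessing $u\sim c_z$ under that clause, both directions close immediately. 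I expect no serious obstacle beyond this bookkeeping, since the statement is essentially unwinding the definitions, which is why the authors call it ``straightforward.''
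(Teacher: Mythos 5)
The paper itself offers no proof of this fact (it is declared ``straightforward''), so your argument has to stand on its own; its skeleton --- injectivity is automatic, so the content is ``surjective $\iff$ complete'' --- is the right one, and the direction ``complete $\Rightarrow$ surjective'' is correct \emph{provided} one argues as in your closing paragraph: (CP$_1$) says exactly $c_z\preceq u$ in the sense of \eqref{eq:rel}, and then the second disjunct of \eqref{eq:rel2}, with $v=c_z$ and $m=\min(u)=z$, gives $u\sim c_z$, hence $[u]=\iota_X(z)$. But your intermediate assertion in that direction, that (CP$_1$)--(CP$_2$) ``translate directly back into $u\preceq c_z$ \emph{and} $c_z\preceq u$,'' is false: $u\preceq c_z$ requires some $k$ with $u(k)\preceq z$, i.e.\ that the minimum is \emph{attained}. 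Take $X=\{\,1/n\mid n\in\N\,\}\cup\{0\}$ with the usual order, $u(n)=1/n$, $z=0$: here $z=\min(u)$ but $u\not\preceq c_z$. This non-attained case is precisely what the asymmetric second disjunct of \eqref{eq:rel2} exists to handle, so it cannot be argued away.

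The same conflation becomes a genuine gap in the forward direction. From surjectivity you get $[f]=[c_z]$, i.e.\ $f\approx c_z$, where $\approx$ is the equivalence relation \emph{generated} by the non-symmetric relation $\sim$. This does not give $f\preceq c_z$ (same counterexample), and it does not even directly give $c_z\preceq f$, since an $\approx$-chain from $f$ to $c_z$ may pass through non-constant sequences and use either disjunct of \eqref{eq:rel2} in either direction; consequently your derivation of (CP$_2$) from ``$f\preceq c_z$'' has no valid premise. The standard repair is an invariant: for $u\in\cuD(\N,X,\preceq)$ let $L(u)=\{\,y\in X\mid \forall n\in\N\colon y\preceq u(n)\,\}$ be its set of lower bounds, and check that $u\sim v$ implies $L(u)=L(v)$ --- for the first disjunct this follows from \eqref{eq:rel} and transitivity, for the second it is exactly the content of (CP$_1$)--(CP$_2$) for $m=\min(u)$. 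Then $L$ is constant on $\approx$-classes, so $f\approx c_z$ yields $L(f)=L(c_z)=\{\,y\mid y\preceq z\,\}$, which says precisely that $z=\min(f)$: membership $z\in L(f)$ is (CP$_1$), and $L(f)\subseteq\{\,y\mid y\preceq z\,\}$ is (CP$_2$). The same invariant is what makes your injectivity step rigorous as well: as written, ``tracing through the definitions'' overlooks that a chain witnessing $c_x\approx c_y$ need not stay among constant sequences, whereas $L(c_x)=L(c_y)$ immediately gives $x\preceq y$, $y\preceq x$, hence $x=y$.
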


\begin{example}
Let $X=\N\sqcup\{\infty\}$ and define $n\preceq m$ if, and only if, $n\geq m$,
where ``$\geq$'' denotes the natural order relation.
Then the poset $(X,\preceq)$ is complete and $\baX=X$.
\end{example}


\subsection{The universal boundary of a poset}
\label{ss:bound}
For a poset $(X,\preceq)$ the poset
 $\partial X=\baX\setminus \image(i_X)$ will be called
the {\em universal boundary} of the poset $(X,\preceq)$. 
From now on we use the notation $x\succ y$ as a short form for 
$x\succeq y$ and $x\not= y$. A function $f\colon\N\to X$
will be said to be strictly decreasing, if $f(n+1)\prec f(n)$ for all $n\in\N$.
The following fact will turn out to be useful.

\begin{fact}
\label{fact:strict}
Let $f\in\cuD(\N,X,\preceq)$ be a decreasing function such that $[f]\in\partial X$.
Then there exists a strictly decreasing function $h\in\cuD(\N,X,\preceq)$ 
such that $f\approx h$, i.e., $[f]=[h]$.
\end{fact}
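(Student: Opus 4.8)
The plan is to produce $h$ as the subsequence of $f$ indexed by the places where $f$ strictly decreases, and then to recognize that $h=f\circ\alpha$ for a strictly increasing reindexing $\alpha$, so that Proposition~\ref{prop:comp}(b) supplies $f\approx h$ essentially for free. The whole argument thus reduces to two things: locating infinitely many strict drops, and checking that the resulting subsequence is genuinely strictly decreasing.

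First I would record the decisive consequence of the hypothesis $[f]\in\partial X$, namely that $f$ cannot be eventually constant. Indeed, suppose $f(n)=z$ for all $n\ge N$. Since $f$ is decreasing one has $f(n)\succeq z$ for every $n$, so $c_z\preceq f$ (take $k_n=1$ in \eqref{eq:rel}); and since $f(n)=z$ for large $n$ one also gets $f\preceq c_z$. Hence $f\sim c_z$ by the first clause of \eqref{eq:rel2}, so $[f]=[c_z]=i_X(z)\in\image(i_X)$, contradicting $[f]\in\partial X=\baX\setminus\image(i_X)$. Thus $f$ is not eventually constant.

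Next I would set $S=\{\,n\in\N\mid f(n+1)\prec f(n)\,\}$, the set of indices at which $f$ strictly drops. If $S$ were contained in $\{1,\dots,M\}$, then $f(n+1)=f(n)$ for all $n>M$, i.e.\ $f$ would be constant beyond $M$; this is excluded by the previous step, so $S$ is infinite. Enumerate $S=\{n_1<n_2<\cdots\}$, let $\alpha\colon\N\to\N$ be the strictly increasing function $\alpha(k)=n_k$, and define $h=f\circ\alpha$. As the composite of a decreasing function with an increasing reindexing, $h$ again lies in $\cuD(\N,X,\preceq)$.

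Finally I would verify that $h$ is strictly decreasing and conclude. For each $k$ one has $n_{k+1}\ge n_k+1$, so monotonicity of $f$ gives $h(k+1)=f(n_{k+1})\preceq f(n_k+1)$, while $n_k\in S$ gives $f(n_k+1)\prec f(n_k)=h(k)$; combining these yields $h(k+1)\preceq f(n_k+1)\preceq h(k)$. Since $\alpha$ is strictly increasing, Proposition~\ref{prop:comp}(b) then gives $f\approx f\circ\alpha=h$, i.e.\ $[f]=[h]$, as desired. The only point requiring care is the strictness: from $h(k+1)\preceq f(n_k+1)\preceq h(k)$ one must rule out $h(k+1)=h(k)$, which would force $f(n_k+1)=f(n_k)$ by the antisymmetry of $\preceq$ on $X$ and contradict $n_k\in S$. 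This is precisely where the genuine partial-order structure of $(X,\preceq)$ — rather than a mere preorder — is used, and it is the one step I would treat explicitly rather than as routine.
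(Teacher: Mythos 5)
Your proof is correct and takes essentially the same route as the paper's: both extract the subsequence of $f$ along the indices where a new, strictly smaller value appears (your set $S$ of strict drops versus the paper's set of first-occurrence indices of $\image(f)$ — the same indices up to a shift) and then invoke Proposition~\ref{prop:comp}(b) to conclude $f\approx f\circ\alpha=h$. The only difference is that you explicitly justify what the paper asserts without argument, namely that $[f]\in\partial X$ forces infinitely many strict drops (equivalently, infinite image), via the comparison $f\sim c_z$ with a constant sequence; this fills a genuine, if small, gap in the paper's write-up.
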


\begin{proof}
By hypothesis, $J=\image(f)$ is an infinite set. In particular,  the set
$\Omega=\{\,\min(f^{-1}(\{j\})\mid j\in J\,\}$ is an infinite and unbounded subset of $\N$.
Let $e\colon\N\to\Omega$ be the enumeration function of $\Omega$, i.e.,
$e(1)=\min(\Omega)$, and recursively one has 
$e(k+1)=\min(\Omega\setminus\{e(1),\ldots, e_k\})$.
Then, by construction, $h=f\circ e$ is strictly decreasing, and, by
Proposition~\ref{prop:comp}(b),
one has $f\approx h$, and hence the claim.
\end{proof}

\begin{fact}
\label{fact:noeth}
Let $(X,\preceq)$ be a noetherian poset, and let $(\baX,\preceq)$
be its completion. Then for all $\tau\in X$ one has $\dC_\tau(\baX)\subseteq X$.
In particular, $\dC_\tau(\baX)=\dC_\tau(X)$,
where the cocones are taken in the respective posets.
\end{fact}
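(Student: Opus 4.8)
The plan is to unwind the definition of the order on the completion $\baX$ and then bring the noetherian hypothesis to bear through the finiteness of $\dC_\tau(X)$. First I would fix $\tau\in X$, take an arbitrary element $[v]\in\dC_\tau(\baX)$ represented by some $v\in\cuD(\N,X,\preceq)$, and translate the relation $\iota_X(\tau)=[c_\tau]\preceq[v]$ into a statement about $v$. By Proposition~\ref{prop:comp}(c) this means $c_\tau\preceq v$, and by the definition \eqref{eq:rel} this says that for each $n\in\N$ there is $k_n$ with $c_\tau(k_n)\preceq v(n)$; since $c_\tau$ is constant with value $\tau$, this reduces to $\tau\preceq v(n)$ for all $n$. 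Thus $v$ is a decreasing function all of whose values lie in the single cocone $\dC_\tau(X)$.

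The next step is the heart of the argument. Since $(X,\preceq)$ is noetherian, $\dC_\tau(X)$ is finite, so $v$ is a decreasing sequence with finite image. I would then argue that such a sequence must be eventually constant: the terms $v(1)\succeq v(2)\succeq\cdots$ form a descending chain in a finite poset, and antisymmetry forbids a value that has been left from ever recurring, so only finitely many strict decreases $v(n)\succ v(n+1)$ can occur. After the last one the sequence stabilizes, giving $N\in\N$ and $x\in\dC_\tau(X)$ with $v(n)=x$ for all $n\geq N$.

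Finally I would identify $[v]$ with an element of $X$. Precomposing $v$ with the strictly increasing shift $\alpha(n)=N+n-1$ produces $v\circ\alpha=c_x$, whence Proposition~\ref{prop:comp}(b) gives $v\approx v\circ\alpha=c_x$, i.e. $[v]=\iota_X(x)\in\image(\iota_X)$. Since $[v]$ was an arbitrary element of $\dC_\tau(\baX)$, this establishes $\dC_\tau(\baX)\subseteq X$. For the ``in particular'' clause I would invoke that $\iota_X$ is a poset embedding, so the order on $\baX$ restricts to that on $X$; hence an element of $X$ lies in $\dC_\tau(\baX)$ exactly when it lies in $\dC_\tau(X)$, and together with the inclusion just proved this yields $\dC_\tau(\baX)=\dC_\tau(X)$.

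The only point requiring real care is the ``eventually constant'' claim: one must use both the finiteness of the image (from the noetherian hypothesis) and the antisymmetry of the poset order, since these together rule out the possibility that $v$ merely oscillates among finitely many values. Everything else is a mechanical unpacking of the definitions of $\baX$ and of the completion order, plus one appeal to Proposition~\ref{prop:comp}(b).
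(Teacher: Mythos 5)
Your proof is correct. There is nothing in the paper to compare it against: Fact~\ref{fact:noeth} is stated without proof, so your argument supplies exactly the reasoning the authors treat as routine. Each step checks out against the paper's definitions: unwinding $[c_\tau]\preceq[v]$ via Proposition~\ref{prop:comp}(c) and \eqref{eq:rel} does give $\tau\preceq v(n)$ for all $n$; a decreasing sequence with values in the finite set $\dC_\tau(X)$ (finite precisely by the paper's definition of noetherian) is eventually constant, since antisymmetry makes each strict decrease produce a value distinct from all earlier ones, so at most $\card(\dC_\tau(X))-1$ strict decreases can occur; and Proposition~\ref{prop:comp}(b) applied to the strictly increasing shift $\alpha(n)=N+n-1$ gives $[v]=[c_x]=\iota_X(x)$. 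The ``in particular'' clause follows as you say, because $\iota_X$ is order-preserving and order-reflecting (immediate from \eqref{eq:rel} applied to constant functions), so both inclusions $\dC_\tau(\baX)\subseteq\dC_\tau(X)$ and $\dC_\tau(X)\subseteq\dC_\tau(\baX)$ hold. One remark worth making: your eventual-constancy lemma (finite image implies $[v]\in\image(\iota_X)$) is exactly the contrapositive of the unproved opening assertion in the paper's proof of Fact~\ref{fact:strict}, namely that $[f]\in\partial X$ forces $\image(f)$ to be infinite; so your argument also closes that small gap in the paper.
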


\begin{example}
Let $X=A\sqcup B$, where $A,B=\Z$ and define
\begin{equation}
n\preceq m\Longleftrightarrow\, \Big(((n,m\in A \vee n,m \in B) \wedge n\le m) \vee (n\in A \wedge m\in B)\Big),
\end{equation}
where ``$\leq$'' denotes the natural order relation on $\Z$.
Then $(X,\preceq)$ is a poset and its completion is given by 
$\baX=\Z\sqcup\{-\infty\}\, \bigsqcup \Z\sqcup\{-\infty\}$.
For $n\in A$, one has $\dC_n(X)\neq \dC_n(\baX)$, 
since $-\infty\in B$ is in $\dC_n(\baX)$, but not in $\dC_n(X)$.
\end{example}


\subsection{The cone topology}
\label{ss:cone}
Let $(X,\preceq)$ be a poset, and let
$(\baX,\preceq)$ denote its completion.
For $\tau,\omega\in X$ let
\begin{equation}
\label{eq:sect}
S(\tau,\omega)=\{\,x\in X\mid x\preceq\tau\ \wedge\ x\preceq\omega\,\}.
\end{equation}
By transitivity,
\begin{equation}
\label{eq:cone1}
\cC_\tau(\baX)\cap\cC_\omega(\baX)=\bigcup_{z\in S(\tau,\omega)}\cC_z(\baX).
\end{equation}
In particular,
\begin{equation}
\label{eq:cone2}
\caB_c(\baX)=\big\{\,\{x\}\mid x\in X\,\big\}\cup\big\{\,\cC_\omega(\baX)\mid
\omega\in X\,\big\}
\end{equation}
is a base of a topology $\caT_c(\baX)$ - the {\em cone topology} - on $\baX$. By construction,
the subspace $X$ is discrete and open, and the subspace $\der X$ is closed.

For $\omega\in\baX$ let $\caN_c(\omega)$ denote the set of all open 
neighborhoods of $\omega$ with respect to the cone-topology, and put 
$\caS(\omega)=\bigcap_{U\in\caN_c(\omega)} U$. Then, by construction, one has
$\caS(\omega)=\{\omega\}$ for $\omega\in X$,
and $\caS(\omega)=\cC_\omega(\baX)$ for $\omega\in\der X$.
This implies the following.

\begin{prop}
\label{prop:coneT}
Let $(X,\preceq)$ be a poset, and let $(\baX,\preceq)$ denote its completion.
Then $(\baX,\caT_c(\baX))$ is a $T_0$-space (or Kolmogorov space).
\end{prop}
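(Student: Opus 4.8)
The plan is to derive the Kolmogorov property directly from the kernels $\caS(\omega)=\bigcap_{U\in\caN_c(\omega)}U$ computed just before the statement. The first step is the standard observation that two points $\omega,\omega'\in\baX$ are topologically indistinguishable in $(\baX,\caT_c(\baX))$ precisely when $\omega'\in\caS(\omega)$ and $\omega\in\caS(\omega')$: indeed, $\omega'\in\caS(\omega)$ says that every cone-open neighbourhood of $\omega$ also contains $\omega'$, so the conjunction of the two memberships is equivalent to $\{\,U\in\caT_c(\baX)\mid\omega\in U\,\}=\{\,U\in\caT_c(\baX)\mid\omega'\in U\,\}$. Hence it suffices to show that $\omega'\in\caS(\omega)$ together with $\omega\in\caS(\omega')$ forces $\omega=\omega'$; this is exactly the assertion that $(\baX,\caT_c(\baX))$ is $T_0$.

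The second step is a short case distinction according to whether the points lie in $X$ or in $\der X$, using the two explicit formulas $\caS(\omega)=\{\omega\}$ for $\omega\in X$ and $\caS(\omega)=\cC_\omega(\baX)$ for $\omega\in\der X$. If at least one of the points, say $\omega$, lies in $X$, then $\{\omega\}\in\caB_c(\baX)$ is open and equals $\caS(\omega)$, so $\omega'\in\caS(\omega)$ immediately gives $\omega'=\omega$. In the remaining case $\omega,\omega'\in\der X$, the two memberships read $\omega'\in\cC_\omega(\baX)$ and $\omega\in\cC_{\omega'}(\baX)$, i.e.\ $\omega'\preceq\omega$ and $\omega\preceq\omega'$; antisymmetry of the completion poset, which is Proposition~\ref{prop:comp}(c), then yields $\omega=\omega'$. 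This closes the argument.

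The only genuinely nontrivial ingredient is the identity $\caS(\omega)=\cC_\omega(\baX)$ for boundary points, and this is where the main work would lie were it not already recorded above: it is the point at which completeness of $\baX$, via (CP$_1$) and (CP$_2$), is used to see that the intersection of all basic cones $\cC_\tau(\baX)$ with $\tau\in X$ and $\tau\succeq\omega$ collapses to $\cC_\omega(\baX)$. Granting this, the remaining obstacle is merely bookkeeping: one must keep the interior and boundary cases strictly separate, since $\caS$ is a singleton in the former but a full cone in the latter. Finally I would remark that this is sharp --- boundary cones $\cC_\omega(\baX)$ are in general not singletons, so $(\baX,\caT_c(\baX))$ need not be $T_1$, which is precisely why only the $T_0$ separation axiom is asserted.
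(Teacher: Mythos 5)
Your proof is correct and is essentially the paper's own argument: both rest on the kernel computation $\caS(\omega)=\{\omega\}$ for $\omega\in X$ and $\caS(\omega)=\cC_\omega(\baX)$ for $\omega\in\der X$, use the same case split (at least one point in $X$ versus both in $\der X$), and finish with antisymmetry of $(\baX,\preceq)$ from Proposition~\ref{prop:comp}(c); you argue in the direct form (topologically indistinguishable points coincide) while the paper argues the contrapositive (distinct points admit a separating neighbourhood), which is the same argument. One minor caveat: the identity $\caS(\omega)=\cC_\omega(\baX)$ for $\omega\in\der X$ comes straight from the definition \eqref{eq:rel} of $\preceq$ on $\cuD(\N,X,\preceq)$ (every basic neighbourhood of $\omega=[u]$ is a cone $\cC_\tau(\baX)$ with $\tau\succeq\omega$, and $x\preceq u(n)$ for all $n$ forces $x\preceq\omega$), not from completeness via (CP$_1$)/(CP$_2$) as your closing paragraph suggests --- but since you only cite the identity rather than re-derive it, this misattribution does not affect the proof.
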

\begin{proof}
Let $\tau,\omega\in\baX$, $\tau\not=\omega$.
If either $\tau\in X$ or $\omega\in X$, then either $\{\tau\}$ or
$\{\omega\}$ is an open set. So we may assume that $\tau,\omega\in\der X$.
As $\caS(\omega)=\cC_\omega(\baX)$, either there exists $U\in\caN_c(\omega)$,
$\tau\not\in U$, or $\tau\preceq\omega$. By changing the role of $\omega$ and $\tau$,
either there exists $V\in\caN_c(\tau)$,
$\omega\not\in V$, or $\omega\preceq\tau$. Since $\tau\preceq\omega$
and $\omega\preceq\tau$ is impossible, this yields the claim.
\end{proof}

\subsection{The fine topology}
\label{ss:finetop}
For a partially ordered set $(X,\preceq)$ let 
\begin{equation}
\label{eq:finetop1}
\euS=\{\,\{\tau\},\,C_\tau(\baX),\,C_\tau(\baX)^C\mid\tau\in X\,\}
\end{equation}
denote the set of all 1-elementary sets in $X$, and all cones and their complements in $X$. Then $\euS$ is a subbasis of a topology $\caT_f(\baX)$ on $\baX$ which will  call the {\em{fine topology}} on $\baX$.
In particular, the set $\Omega=\{\,X=\bigcap_{1\leq j\leq r}X_j\mid X_1,\ldots,X_r\in\euS\,\}$
is a base of the topology $\caT_f(\baX)$. By definition, this topology has the following
properties.

\begin{fact} 
\label{fact:fintop}
Let $(X,\preceq)$ be a partially ordered set. Then
\begin{itemize}
\item[(a)] $(\baX,\caT_f(\baX))$ is a $T_2$-space (or Hausdorff space).
\item[(b)] $\caT_c(\baX)\subseteq\caT_f(\baX)$.
\end{itemize} 
\end{fact}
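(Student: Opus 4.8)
The plan is to dispatch (b) immediately and to concentrate the work on the Hausdorff property (a). For (b), I would simply observe that every member of the base $\caB_c(\baX)$ of the cone topology listed in \eqref{eq:cone2} — each singleton $\{x\}$ with $x\in X$ and each cone $\cC_\omega(\baX)$ with $\omega\in X$ — already occurs among the subbasic sets $\euS$ of \eqref{eq:finetop1} generating $\caT_f(\baX)$. Hence $\caB_c(\baX)\subseteq\caT_f(\baX)$, and since every cone-open set is a union of members of $\caB_c(\baX)$, it is fine-open; this gives $\caT_c(\baX)\subseteq\caT_f(\baX)$. For (a) the structural remark I would exploit throughout is that, for each $\tau\in X$, both $\cC_\tau(\baX)$ and its complement $\baX\setminus\cC_\tau(\baX)$ lie in $\euS$, so each $\tau$ partitions $\baX$ into two disjoint open sets; moreover each singleton $\{x\}$, $x\in X$, is open. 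Thus I would fix distinct $\sigma,\omega\in\baX$ and separate them by cases according to whether the points lie in $X$ or in $\partial X$.

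The principal case is $\sigma=[u]$ and $\omega=[v]$ both in $\partial X$. Here $\sigma\neq\omega$ forces, via the antisymmetry of $(\baX,\preceq)$ from Proposition~\ref{prop:comp}(c), that at least one of $u\preceq v$, $v\preceq u$ fails, say $u\not\preceq v$ (otherwise both would hold and $[u]=[v]$). Negating the quantifiers in \eqref{eq:rel} produces an index $n_0$ with $u(k)\not\preceq v(n_0)$ for every $k$. Setting $\tau=v(n_0)\in X$, I would then check directly that $\omega\in\cC_\tau(\baX)$ (taking $k_n=n_0$ in the criterion for $[v]\preceq\tau$) while $\sigma\notin\cC_\tau(\baX)$ (the instance $n=1$ of $[u]\preceq\tau$ would contradict the choice of $n_0$). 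The complementary open sets $\cC_\tau(\baX)\ni\omega$ and $\baX\setminus\cC_\tau(\baX)\ni\sigma$ then separate the two points.

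The remaining cases bring in the open singletons. If $\sigma,\omega\in X$ are distinct, then $\{\sigma\}$ and $\{\omega\}$ already separate them. If $\sigma\in X$ and $\omega=[u]\in\partial X$, I would split according to whether $\omega\preceq\sigma$: when $\omega\not\preceq\sigma$ the set $\baX\setminus\cC_\sigma(\baX)$ contains $\omega$ but not $\sigma$ and is disjoint from $\cC_\sigma(\baX)\ni\sigma$; when $\omega\preceq\sigma$, I would pass to a strictly decreasing representative $u$ of $\omega$ using Fact~\ref{fact:strict} and find $m$ with $\sigma\not\preceq u(m)$. Such an $m$ must exist, for otherwise $\sigma\preceq u(m)$ for all $m$ together with $\omega\preceq\sigma$ (which forces $u(k)\preceq\sigma$ eventually) would yield $u(k)=\sigma$ for all large $k$, contradicting strict decrease and $\omega\in\partial X$. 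Then $\cC_{u(m)}(\baX)$ is an open neighbourhood of $\omega$ missing $\sigma$, hence disjoint from $\{\sigma\}$.

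I expect the main obstacle to be the bookkeeping in converting the abstract inequality $\sigma\neq\omega$ into an explicit witness $\tau\in X$: one must carefully negate the two nested quantifiers in the definition \eqref{eq:rel} of $\preceq$ on decreasing sequences, and in the mixed subcase $\omega\preceq\sigma$ use that a boundary class admits no eventually constant representative. Once the witness is in hand, the verifications that the selected cones, their complements, and the singletons have the asserted memberships are routine.
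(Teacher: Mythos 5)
Your proof is correct, but be aware that the paper offers no argument at all for this Fact: it is prefaced only by ``By definition, this topology has the following properties,'' so what you have produced is the missing proof rather than a variant of an existing one. Part (b) is indeed immediate, exactly as you say: every member of the base \eqref{eq:cone2} of $\caT_c(\baX)$ already lies in the subbasis \eqref{eq:finetop1} of $\caT_f(\baX)$. Part (a) is where your work goes beyond what the paper records, and your case analysis is sound. For two boundary points $[u]\neq[v]$, antisymmetry of $(\baX,\preceq)$ (Proposition~\ref{prop:comp}(c)) lets you assume $u\not\preceq v$, and negating the quantifiers in \eqref{eq:rel} yields $n_0$ with $u(k)\not\preceq v(n_0)$ for all $k$; the cone $\cC_{v(n_0)}(\baX)$ and its complement, both subbasic, then separate the points — this part is ``by definition'' in spirit. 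The mixed case $\sigma\in X$, $\omega=[u]\in\partial X$ with $\omega\preceq\sigma$ is the one place where genuine content is needed: the complement of the open singleton $\{\sigma\}$ is \emph{not} subbasic, so one must exhibit a cone through $\omega$ missing $\sigma$, and your argument — pass to a strictly decreasing representative via Fact~\ref{fact:strict}, then note that $\sigma\preceq u(m)$ for all $m$ together with $u(k)\preceq\sigma$ for large $k$ (which follows from $\omega\preceq\sigma$ and monotonicity) would force $u$ to be eventually constant by antisymmetry in $X$, contradicting strict decrease — is exactly the right way to obtain $m$ with $\sigma\not\preceq u(m)$. In short: your proof is a complete and correct elaboration of a statement the paper asserts without proof, and it shows in particular that assertion (a) is not purely formal but relies on Fact~\ref{fact:strict}.
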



\subsection{The $\sim$-boundary}
\label{ss:sbound}
There is another type of boundary for a poset, the $\sim$-boundary, which
seems to be relevant for the study of fractals.
Let $(X,\preceq)$ be a noetherian poset, and let $(\baX,\preceq)$ denote its completion.
Put
\begin{equation}
\label{eq:ome}
\Omega=\Delta(X)\sqcup\{\,(\eps,\eta)\in\der X\times \der X\mid \eps\preceq\eta\,\},
\end{equation}
where $\Delta(X)=\{\,(x,x)\mid x\in X\,\}$, and let $\sim$ denote the
equivalence relation on $\baX$ generated by the relation $\Omega$.
Then one has a canonical map 
\begin{equation}
\label{eq:projtilde}
\pi\colon\baX\to\tiX,
\end{equation}
where $\tiX=\baX/\!\!\sim$.
By construction, $\pi\vert_X$ is injective. 
The set $\tider X=\tiX\setminus \pi(X)$ will be called 
the {\em $\sim$-boundary} of the poset $(X, \preceq)$. 
We put 
\begin{equation}
I(\sim)=\{\,(\omega,\tau)\in \baX\times\baX \mid \omega\sim\tau \,\}\subseteq \baX\times\baX\\
\end{equation}
The set $\tiX$ carries the {\em quotient topology} $\caT_q(\tiX)$ with respect to the mapping 
$\pi$ and the topological space $(\baX,\caT_c(\baX))$.
In particular, the subspace $\pi(X)\subseteq\tiX$ is discrete and open, and
$\tider X\subseteq\tiX$ is closed.
For $\omega\in\baX$ we put $\bcC_\omega=\pi(\cC_\omega(\baX))$.
The space $\tiX$ will be considered merely as topological space.
It has the following property.

\begin{prop}
\label{prop:frech}
The topological space $(\tiX,\caT_q(\tiX))$ is a $T_1$-space.
\end{prop}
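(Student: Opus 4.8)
The plan is to show that every point of $\tiX$ is closed in the quotient topology, which for a $T_1$-space is equivalent to saying that each singleton $\{p\}\subseteq\tiX$ has closed preimage under $\pi$, i.e.\ that $\pi^{-1}(p)$ is closed in $(\baX,\caT_c(\baX))$. Since $\pi^{-1}(p)$ is precisely an equivalence class of the relation $\sim$, the task reduces to proving that each $\sim$-equivalence class is a closed subset of $\baX$ with respect to the cone topology.

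First I would split into two cases according to whether $p\in\pi(X)$ or $p\in\tider X$. If $p=\pi(x)$ for some $x\in X$, then because $\pi\vert_X$ is injective and the relation $\Omega$ only identifies boundary points $(\eps,\eta)\in\der X\times\der X$ with each other (the only pairs involving $X$ being the diagonal $\Delta(X)$), the equivalence class $\pi^{-1}(p)$ is exactly the singleton $\{x\}$. As $X$ is the discrete open subspace of $(\baX,\caT_c(\baX))$ and $\der X$ is closed, a single point of $X$ is closed in $\baX$: its complement is the union of the open set $\der X$ with the open discrete set $X\setminus\{x\}$. This disposes of the first case.

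The second and main case is $p\in\tider X$, where $\pi^{-1}(p)$ is an equivalence class $E\subseteq\der X$ of the relation generated by $\Omega$ on the boundary. Here I would use the description of the cone topology: a point $\tau\in\baX$ lies in the closure of $E$ only if every basic cone neighborhood of $\tau$ meets $E$. If $\tau\in X$, then $\{\tau\}$ is open and cannot meet $E\subseteq\der X$, so no point of $X$ is in $\overline{E}$. If $\tau\in\der X$, then by the computation preceding Proposition~\ref{prop:coneT} the smallest neighborhood structure is governed by $\caS(\tau)=\cC_\tau(\baX)$; I would argue that if $\tau$ is a boundary point whose every cone neighborhood meets $E$, then $\tau$ must itself be $\sim$-related to the points of $E$, hence $\tau\in E$. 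The key point is that the generating relation $\Omega$ pairs $(\eps,\eta)$ exactly when one lies in the cone of the other, so proximity in the cone topology forces the $\preceq$-comparability that defines $\sim$ on the boundary; closedness of $E$ then follows from the transitivity of the equivalence relation generated by $\Omega$.

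The step I expect to be the main obstacle is this last one: showing that a boundary point in the cone-closure of a $\sim$-class already belongs to that class. The difficulty is that $\sim$ is the equivalence relation \emph{generated} by $\Omega$, so an individual class may be built from a chain of comparabilities, and I must verify that cone-limits do not escape such a chain — equivalently, that the saturation $I(\sim)$ is compatible with how cones nest on $\der X$. I would handle this by analyzing the base \eqref{eq:cone2} restricted to $\der X$ and checking that for boundary points the relation $\eps\preceq\eta$ is detected by cone containment $\cC_\eps(\baX)\subseteq\cC_\eta(\baX)$, so that any $\tau$ accumulating on $E$ is comparable to a member of $E$ and thus identified with it under $\sim$. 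Once each class is shown to be cone-closed, the $T_1$ property of $(\tiX,\caT_q(\tiX))$ is immediate from the defining property of the quotient topology.
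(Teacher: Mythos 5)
Your opening reduction is sound: $(\tiX,\caT_q(\tiX))$ is $T_1$ precisely when every $\sim$-class is closed in $(\baX,\caT_c(\baX))$, and this is an equivalent (if differently packaged) formulation of what the paper tries to verify via the identity $\caS(\pi(\omega))=\{\pi(\omega)\}$ in \eqref{eq:propT1}. Your first case, however, contains a slip: you call $\der X$ an open set, whereas the paper records that $X$ is open and $\der X$ is \emph{closed}; indeed no nonempty subset of $\der X$ is open, because every basic neighborhood of a boundary point is a cone $\cC_\tau(\baX)$ with $\tau\in X$, and such a cone contains $\tau$ itself. The closedness of $\{x\}$ for $x\in X$ is still true, but the correct reason is noetherianity: if some $\omega\in\der X$ had every neighborhood meeting $\{x\}$, then $x\in\caS(\omega)=\cC_\omega(\baX)$, i.e. $x\preceq f(n)$ for all $n$ for a strictly decreasing representative $f$ of $\omega$, which would make $\dC_x$ infinite.

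The real problem is the step you yourself single out as the main obstacle; it is not merely hard, it is false. From ``every cone neighborhood of $\eta$ meets $E$'' you cannot conclude that $\eta$ is comparable to a member of $E$: the witnessing point of $E$ may change from cone to cone. Concretely, consider the noetherian poset $X=\{\,g_n,e_{n,k},d_{n,k}\mid n,k\in\N\,\}$ with order generated by $g_{n+1}\prec g_n$, $e_{n,k+1}\prec e_{n,k}$, $e_{n,1}\prec g_n$, $d_{n,k+1}\prec d_{n,k}$, $d_{n,k}\prec e_{n,k}$ and $d_{n,k}\prec e_{n+1,k}$. Its boundary consists of $\eta=[(g_n)_n]$, $\eps_n=[(e_{n,k})_k]$ and $\delta_n=[(d_{n,k})_k]$, and the only comparabilities among boundary points are $\delta_n\preceq\eps_n$ and $\delta_n\preceq\eps_{n+1}$. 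Hence $E=\{\,\eps_n,\delta_n\mid n\in\N\,\}$ is a single $\sim$-class and $\{\eta\}$ is another. The basic neighborhoods of $\eta$ are exactly the cones $\cC_{g_m}(\baX)$, and $\eps_m\in\cC_{g_m}(\baX)\cap E$ for every $m$, so $\eta\in\cl(E)\setminus E$; thus $E$ is not closed and, by your own (correct) equivalence, $\{\pi(\eps_1)\}$ is not closed in $\tiX$. So the gap cannot be repaired at this level of generality: the statement fails for noetherian posets, and any proof must use structure beyond the poset axioms (for instance, properties special to the posets of finitely $1$-generated monoids). Note also that the paper's own computation \eqref{eq:propT1} founders on exactly the same point: an open neighborhood of $\pi(\omega)$ pulls back to a \emph{saturated} open set, which must contain $\caS(\tau)$ for every $\tau\sim\omega$, so $\caS(\pi(\omega))$ can be strictly larger than $\pi\big(\bigcap_{\tau\sim\omega}\caS(\tau)\big)$; in the example above, every saturated open set containing $\eta$ contains all of $E$.
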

\begin{proof}
For $\omega\in\baX$ one has
\begin{equation}
\label{eq:propT1}
\caS(\pi(\omega))=\pi(\bigcap_{\tau\sim\omega}\caS(\tau))=
\pi(\bigcap_{\tau\sim\omega} \cC_\tau(\baX))=\{\pi(\omega)\}. 
\end{equation}
This yields the claim.
\end{proof}

\section{Monoids and their boundaries}
\label{s:monoids}
A {\em monoid} (or semigroup with unit) $\moM$ is a set with
an associative multiplication $\argu\cdot\argu\colon \moM\times \moM\to \moM$
and a distinguished element $1\in \moM$ satisfying
$1\cdot x=x\cdot1=x$ for all $x\in \moM$. For a monoid $\moM$ we denote by
\begin{equation}
\label{eq:maxgr}
\moM^{\times}=\{\,x\in \moM\mid \exists y\in \moM\colon x\cdot y=y\cdot x=1\,\}
\end{equation}
the {\em maximal subgroup} contained in $\moM$.


\subsection{$\N_0$-graded monoids}
\label{ss:gradmon}
The set of non-negative integers $\N_0=\{0,1,2,\ldots\}$
together with addition is a monoid.
A monoid $\moM$ together with a homomorphism of monoids
$|\argu|\colon \moM\to\N_0$ is called an {\em $\N_0$-graded monoid}.
For $k\in\N_0$ one defines $\moM_k=\{\,x\in \moM\mid |x|=k\,\}$. 
The $\N_0$-graded monoid $\moM$ is said to be {\em connected}, if $\moM_0=\{1\}$.
One has the following straightforward fact.

\begin{fact}
\label{fact:monvon}
For a connected $\N_0$-graded monoid $\moM$ one has
$\moM^\times=\{1\}$.
\end{fact}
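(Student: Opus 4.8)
The plan is to use the grading homomorphism $|\argu|\colon\moM\to\N_0$ to translate the invertibility condition defining $\moM^\times$ into an additive equation in $\N_0$, where the absence of additive inverses (other than for $0$) forces the elements in question to sit in degree zero. Since connectedness gives $\moM_0=\{1\}$, this pins down $\moM^\times$ completely. The inclusion $\{1\}\subseteq\moM^\times$ is immediate, as $1\cdot 1=1$ exhibits $1$ as its own inverse, so the content is the reverse inclusion.

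Concretely, I would take an arbitrary $x\in\moM^\times$ and choose $y\in\moM$ with $x\cdot y=y\cdot x=1$. Applying $|\argu|$ and using that it is a \emph{homomorphism of monoids} (hence additive and sending $1$ to $0$), the relation $x\cdot y=1$ yields $|x|+|y|=|1|=0$ in $\N_0$.

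The key step is then the elementary observation that the only way two elements of $\N_0$ can sum to $0$ is $|x|=|y|=0$. Hence $x\in\moM_0$, and by connectedness $\moM_0=\{1\}$, so $x=1$. This gives $\moM^\times\subseteq\{1\}$, and combined with the trivial inclusion we obtain $\moM^\times=\{1\}$.

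There is no genuine obstacle here; the only point requiring care is that $|\argu|$ is a homomorphism of monoids, so that both $|1_\moM|=0$ and $|x\cdot y|=|x|+|y|$ hold—and these two properties, which are exactly what the argument consumes, are built into the definition of an $\N_0$-graded monoid given in \S\ref{ss:gradmon}.
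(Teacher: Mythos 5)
Your proof is correct and is precisely the intended argument: the paper states this fact without proof, calling it straightforward, and the grading trick you use (applying $|\argu|$ to $x\cdot y=1$ to force $|x|=|y|=0$, then invoking connectedness) is exactly the argument the paper itself employs in the proof of Proposition~\ref{prop:poset}. Nothing is missing.
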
 
If $\moQ$ and $\moM$ are $\N_0$-graded monoids, a homomorphism $\phi\colon \moQ\to \moM$  is a
homomorphism of $\N_0$-graded monoids, if $\phi(\moQ_k)\subseteq \moM_k$
for all $k\in\N_0$. The following property is straightforward.

\begin{prop}
\label{prop:homs}
Let $\phi\colon\moQ\to\moM$ be a homomorphism of $\N_0$-graded monoids.
Then $\phi$ is monoton, i.e.,  $x,y\in\moQ$, $x\preceq y$ implies
$\phi(x)\preceq\phi(y)$, and thus induces a monoton map
\begin{equation}
\label{eq:bphi}
\cuD\phi\colon\cuD(\N,\moQ,\preceq)\longrightarrow\cuD(\N,\moM,\preceq).
\end{equation}
Let $\bphi\colon\bmoQ\to\bmoM$ denote the induced map.
Let $\partial\bphi\colon\partial\moQ\longrightarrow\partial\moM$ be the map induced by $\bphi$.
Then $\partial\bphi$ is continuous with repect to the cone topology.
\end{prop}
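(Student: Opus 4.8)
The plan is to build the induced maps one layer at a time—$\phi\rightsquigarrow\cuD\phi\rightsquigarrow\bphi\rightsquigarrow\partial\bphi$—and then to reduce continuity to a single cone computation. First I would prove monotonicity directly from the definition of $\preceq$ on a monoid: if $x\preceq y$ in $\moQ$, then $x=y\cdot\sigma$ for some $\sigma\in\moQ$, so $\phi(x)=\phi(y)\cdot\phi(\sigma)$ and hence $\phi(x)\preceq\phi(y)$. Post-composition $u\mapsto\phi\circ u$ therefore carries decreasing functions to decreasing functions, which defines $\cuD\phi$. The same argument applied to the witnesses in \eqref{eq:rel} shows that $\cuD\phi$ is monotone for the relation $\preceq$ on $\cuD$: from $u(k_n)\preceq v(n)$ one obtains $\phi(u(k_n))\preceq\phi(v(n))$.

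Next I would check that $\cuD\phi$ descends to $\bphi\colon\bmoQ\to\bmoM$, i.e.\ that it respects $\approx$; it suffices to treat the generating relation $\sim$ of \eqref{eq:rel2}. In the first case $u\preceq v\wedge v\preceq u$, monotonicity gives $\phi\circ u\preceq\phi\circ v$ and $\phi\circ v\preceq\phi\circ u$. In the second case $v=c_m$ with $m=\min(u)$, the sequence $u$ stabilizes at $m$, so $\phi\circ u$ stabilizes at $\phi(m)$ with $\min(\phi\circ u)=\phi(m)$, while $\phi\circ c_m=c_{\phi(m)}$; thus the pair $(\phi\circ u,\phi\circ v)$ is again of the second type. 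Hence $\bphi$ is well defined. To obtain $\partial\bphi$ I must still verify $\bphi(\partial\moQ)\subseteq\partial\moM$: given $[u]\in\partial\moQ$, Fact~\ref{fact:strict} lets me assume $u$ strictly decreasing, and since $\phi$ preserves the grading while along a strictly decreasing chain the degrees strictly increase (the monoids being connected), the image $\phi\circ u$ has unbounded degree and so cannot stabilize, i.e.\ $[\phi\circ u]\in\partial\moM$.

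For continuity—the heart of the statement—I would work with the subspace cone topology on $\partial\moM$, whose basic open sets are, by \eqref{eq:cone2}, the intersections $\cC_\omega(\bmoM)\cap\partial\moM$ with $\omega\in\moM$. Fix one such set. The key observation is that comparing against the \emph{constant} sequence $c_\omega$ collapses the quantifier in \eqref{eq:rel}: one has $\bphi([u])=[\phi\circ u]\preceq\omega$ if and only if $\phi(u(k))\preceq\omega$ for some single index $k$. So let $[u]$ lie in the preimage and pick such a $k$; put $\tau=u(k)\in\moQ$. Then $[u]\preceq\tau$, so $[u]\in\cC_\tau(\bmoQ)\cap\partial\moQ$, which is basic open. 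Moreover this whole cone lies in the preimage: if $[w]\preceq\tau$ then $w(j)\preceq\tau$ for some $j$, whence $\phi(w(j))\preceq\phi(\tau)=\phi(u(k))\preceq\omega$ by monotonicity and transitivity, so $\bphi([w])\preceq\omega$. Thus every point of the preimage has a basic open neighborhood contained in it, and the preimage is open.

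The formal parts (monotonicity and the cone pullback) are short; the steps that require genuine care are the descent of $\cuD\phi$ across $\approx$ in the stabilization case and the boundary-preservation $\bphi(\partial\moQ)\subseteq\partial\moM$, which is precisely where the $\N_0$-grading—degrees tending to infinity along strictly decreasing chains—is used. The one computational convenience that makes continuity painless is the collapse of the $\forall n\,\exists k_n$ clause of \eqref{eq:rel} to a single index when the right-hand comparison is a constant sequence $c_\omega$.
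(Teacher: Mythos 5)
Your proof is correct in the setting where the statement actually makes sense (connected, in particular $1$-generated, monoids), and on the continuity claim it follows the same route as the paper, only more carefully. The paper's entire proof is the identity $\bphi^{-1}(C_\tau(\bmoM))=\bigcup_{y\in\euS}C_y(\bmoQ)$ with $\euS=\bphi^{-1}(C_\tau(\bmoM))$, which is exactly your monotonicity computation; but as written that union runs over points $y\in\bmoQ$, and cones based at \emph{boundary} points are not basic open sets and in general not open at all (in $\beuF_1$ the cone at the unique boundary point is a singleton which is not open). Your quantifier-collapse observation---that $[\phi\circ u]\preceq\omega$ with $\omega\in\moM$ already forces $\phi(u(k))\preceq\omega$ for a single index $k$, so one may take the basic open cone based at the monoid element $u(k)\in\moQ$---is precisely what repairs this, so your version of the cone argument is the one that actually closes the proof.

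You also verify two things the paper hides inside the phrase ``the induced map'': that $\cuD\phi$ descends through $\approx$, and that $\bphi(\partial\moQ)\subseteq\partial\moM$. Both verifications are needed and yours are sound, but the assertion in your case (ii)---that a decreasing $u$ with $m=\min(u)$ stabilizes at $m$---is exactly where connectedness enters, and it deserves its one-line justification: $|u(n)|$ is non-decreasing and bounded by $|m|$, hence eventually constant, and then $u(n+1)=u(n)\sigma_n$ with $|\sigma_n|=0$ forces $\sigma_n=1$ when $\moQ_0=\{1\}$. For a general $\N_0$-graded monoid, as the proposition is literally phrased, this step is false and with it the whole statement: in $\moQ=\langle a,b\mid ab=b\rangle$ graded by $|a|=0$, $|b|=1$ (this is even a poset), the sequence $u(n)=a^n$ is strictly decreasing with $\min(u)=b$, so $u\approx c_b$; the surjective graded homomorphism $\phi\colon\moQ\to\euF\langle b\rangle$ with $\phi(a)=1$, $\phi(b)=b$ sends $u$ to $c_1$ and $c_b$ to $c_b$, and $c_1\not\approx c_b$, so $\bphi$ is not even well defined. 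Since the paper establishes the poset framework only for $1$-generated monoids (Proposition~\ref{prop:poset}, Corollary~\ref{cor:poset}), your implicit connectedness assumption is the intended reading---but it is genuinely needed, not a convenience, and should be stated explicitly in your proof.
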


\begin{proof}
Let $\tau\in\moM$. Then the monotony of $\cuD\phi$ implies that
\begin{equation}
\label{eq:conthom}
\bphi^{-1}(C_\tau(\bmoM))=\bigcup_{y\in \euS} C_y(\bmoQ),
\end{equation}
where $\euS=\{\,q\in\bmoQ\mid \bphi(q)\in C_\tau(\bmoM)\,\}$. Thus $\bphi$
and  $\partial\phi$ are  continuous.
\end{proof}


\subsection{$1$-generated monoids}
\label{ss:1gen}
For any set $Y$ there exists a {\em free monoid} $\euF\langle Y\rangle$
which is naturally $\N_0$-graded. Moreover, $\euF\langle Y\rangle$ is connected
and $\euF\langle Y\rangle_1=Y$. For an $\N_0$-graded monoid $\moM$ there exists
a canonical homomorphism of $\N_0$-graded monoids
\begin{equation}
\label{eq:canmon}
\phi_\moM\colon \euF\langle \moM_1\rangle\longrightarrow \moM
\end{equation}
satisfying $\phi_{\moM,1}=\iid_{\moM_1}$. The $\N_0$-graded monoid $\moM$ is said to be
{\em $1$-generated}, if $\phi_\moM$ is surjective. In particular,
such a monoid is connected. By definition, free monoids are $1$-generated.
Moreover, $\moM$ is said to be {\em finitely $1$-generated}, if it is $1$-generated
and $\moM_1$ is a finite set. The following important question remains unanswered in this paper.

\begin{ques}
\label{ques:comp} Does there exist a finitely $1$-generated monoid $\moM$ satisfying
$\caT_c(\bmoM)\not=\caT_f(\bmoM)$.
\end{ques}


\subsection{Monoids as posets}
\label{ss:monpos}
Let $\moM$ be a monoid. 
For $x\in \moM$, put
\begin{align}
\moM x&=\{\,yx\mid y\in \moM \,\};\\
x\moM&=\{\,xy\mid y\in \moM \,\}.
\end{align}
For $x,y\in \moM$ we define
\begin{equation}
x\preceq y \Longleftrightarrow x\moM\subseteq y\moM,
\end{equation}
i.e.,  $x\preceq y$ if, and only if, there exists $z\in \moM$ such that $x=yz$.


\subsection{Left cancellative monoids}
\label{ss:lcmon}
A monoid $\moM$ is said to be {\it left cancellative} if $xy=xz$ implies $y=z$
for all $x,y,z\in \moM$; and {\it right cancellative} if $yx=zx$ implies $y=z$
for all $x,y,z\in \moM$.

\begin{prop} 
Let $\moM$ be a left-cancellative monoid.
For $x,y\in \moM$ one has $\moM x=\moM y$ if, and only if, 
there exists $z\in \moM^\times$ such that $y=xz$.
\end{prop}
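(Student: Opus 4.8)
The plan is to prove both implications through the unit $1_{\moM}$, imitating the classical description of Green's $\mathcal{L}$-relation, while watching carefully the side on which each product and each cancellation is taken; this bookkeeping is where the whole argument lives.

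For the ``only if'' direction I would start from $\moM x=\moM y$ and use $1_{\moM}\in\moM$: then $y=1_{\moM}\cdot y\in\moM y=\moM x$ gives $y=ux$ for some $u\in\moM$, and symmetrically $x=vy$ for some $v\in\moM$. Substituting one gets $y=(uv)y$ and $x=(vu)x$, and one would like to cancel to obtain $uv=vu=1_{\moM}$, hence $u\in\moM^\times$ and $y=ux$. The point to notice is that cancelling the repeated factor in $y=(uv)y$ removes $y$ \emph{from the right}: it is \emph{right} cancellation that is called for, not the left cancellation that is hypothesised, and the unit the argument produces multiplies $x$ on the \emph{left} ($y=ux$), not on the right as in the stated conclusion $y=xz$.

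For the ``if'' direction the clean computation works only when the unit sits on the left: if $y=zx$ with $z\in\moM^\times$, then $\moM y=\moM(zx)=(\moM z)x=\moM x$, because $\moM z=\moM$ for any unit $z$ (every $m$ equals $(mz^{-1})z\in\moM z$). With the unit on the right, as written, one instead obtains $\moM y=\moM(xz)=(\moM x)z$, and there is no reason for right multiplication by a unit to preserve the left ideal $\moM x$.

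This last point is the main obstacle, and it is a genuine one rather than a matter of bookkeeping: the sidedness in the statement (left ideals, left cancellation, a right unit factor $y=xz$) does not match up. For instance, in the left-cancellative monoid $\moM=\{\,M\in M_2(\Z)\mid\det M\neq0\,\}$, with $\moM^\times=\mathrm{GL}_2(\Z)$, the unit $z=\left(\begin{smallmatrix}1&0\\1&1\end{smallmatrix}\right)$ and $x=\mathrm{diag}(2,1)$ give $y=xz=\left(\begin{smallmatrix}2&0\\1&1\end{smallmatrix}\right)$ with $yx^{-1}=\left(\begin{smallmatrix}1&0\\1/2&1\end{smallmatrix}\right)\notin\moM$, so $y\notin\moM x$ and $\moM x\neq\moM y$; dually one finds $x,y$ with $\moM x=\moM y$ but no unit $z$ satisfying $y=xz$. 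The plan therefore closes only after the sides are matched: either one strengthens the hypothesis to right cancellation and reads the conclusion as $y=zx$ with the unit on the left, or --- in line with the preorder $x\preceq y\Leftrightarrow x\moM\subseteq y\moM$ that organises the rest of the paper --- one works throughout with the right ideals $x\moM,\,y\moM$, for which left cancellation does yield precisely $y=xz$ with $z\in\moM^\times$ by the symmetric version of the computation above.
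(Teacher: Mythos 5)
Your sidedness diagnosis is exactly right, and the paper's own proof vindicates it: although the proposition is \emph{stated} for the left ideals $\moM x=\moM y$, the proof switches without comment to ``suppose $x\moM=y\moM$'' and establishes precisely the right-ideal version you propose as the repair --- $x\moM=y\moM$ if and only if $y=xz$ for some $z\in\moM^\times$ --- which is also the version consistent with the preorder $x\preceq y\Leftrightarrow x\moM\subseteq y\moM$ used throughout the paper and with Corollary~\ref{cor:canmon}. For that corrected statement your argument coincides with the paper's: mutual membership gives $y=xz$ and $x=yw$, hence $x=xzw$ and $y=ywz$, and left cancellation of the common \emph{left} factor yields $zw=1=wz$; conversely, $z\moM=\moM$ for a unit $z$ gives $y\moM=x(z\moM)=x\moM$ with no cancellation hypothesis needed. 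Your counterexample in $\{\,M\in M_2(\Z)\mid \det M\neq 0\,\}$ correctly refutes the ``if'' direction of the literal left-ideal statement, and your unexhibited dual claim is also true: with $x=\mathrm{diag}(2,1)$ and the unit $u=\left(\begin{smallmatrix}1&1\\0&1\end{smallmatrix}\right)$ one has $\moM(ux)=(\moM u)x=\moM x$, while the only candidate $z=x^{-1}ux=\left(\begin{smallmatrix}1&1/2\\0&1\end{smallmatrix}\right)$ is not integral, so no unit $z$ with $ux=xz$ exists. In short: there is no gap in your proof; the defect you found is a typo in the paper's statement, which the paper's own proof silently corrects, and your proof of the corrected statement is essentially identical to the one in the paper.
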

\begin{proof}
For $z\in \moM^{\times}$ one has $z\moM=\moM$.
Thus for $x\in\moM$ and $y=xz$, multiplying by $x$ from the left yields $y\moM=x\moM$.
Viceversa, suppose $x\moM=y\moM$ for $x,y$ in $\moM$.
Then there exist $z,w\in \moM$ such that $y=xz$ and $x=yw$.
Hence $y=ywz$ and $x=xzw$.
Thus left cancellation implies $zw=1=wz$, showing that $z,w\in \moM^\times$.
\end{proof}

\begin{cor}
\label{cor:canmon}
Let $\moM$ be a left-cancellative monoid. Then $(\moM/\moM^\times,\preceq)$ is a poset.
\end{cor}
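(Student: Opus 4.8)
The plan is to realize $(\moM/\moM^\times,\preceq)$ as the poset reflection of the preordered set $(\moM,\preceq)$. First I would record that $\preceq$ on $\moM$ is a preorder: reflexivity is immediate from $x\moM\subseteq x\moM$, and transitivity from $x\moM\subseteq y\moM\subseteq z\moM$. What $\preceq$ lacks in general is antisymmetry, so the task reduces to quotienting by the symmetric part of the preorder and checking that the resulting relation is genuinely antisymmetric. Accordingly, I define $x\sim y$ if and only if $x\preceq y$ and $y\preceq x$, equivalently $x\moM=y\moM$.

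The key step is to identify the $\sim$-classes with the cosets $x\moM^\times$. This is exactly the content of the preceding Proposition, whose proof establishes that for left-cancellative $\moM$ one has $x\moM=y\moM$ if and only if $y=xz$ for some $z\in\moM^\times$, i.e. $y\in x\moM^\times$. Since $\moM^\times$ is a group, the relation $y\in x\moM^\times$ is visibly an equivalence relation whose classes partition $\moM$, and these classes are by definition the elements of $\moM/\moM^\times$. Thus $\sim$ coincides with the partition of $\moM$ into $\moM^\times$-cosets, and the quotient set $\moM/\!\sim$ coincides with $\moM/\moM^\times$. This is the only place where left-cancellativity enters, and supplying it is precisely the role of the Proposition.

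It then remains to descend $\preceq$ and verify the poset axioms. I would set $[x]\preceq[y]$ iff $x\preceq y$, and check well-definedness: if $x\sim x'$ and $y\sim y'$ then $x\moM=x'\moM$ and $y\moM=y'\moM$, so $x\moM\subseteq y\moM$ holds iff $x'\moM\subseteq y'\moM$, i.e. the truth of $x\preceq y$ depends only on the classes. Reflexivity and transitivity of the descended relation are inherited directly from the preorder on $\moM$, and antisymmetry is automatic by construction: if $[x]\preceq[y]$ and $[y]\preceq[x]$ then $x\preceq y$ and $y\preceq x$, hence $x\sim y$ and $[x]=[y]$. There is no real obstacle here; the only point requiring a little care is the well-definedness of the descended relation, while the substantive input---that the symmetric part of $\preceq$ is exactly the coset partition---has already been furnished by the preceding Proposition.
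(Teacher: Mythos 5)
Your proof is correct and follows exactly the route the paper intends: the Corollary is stated there without proof as an immediate consequence of the preceding Proposition, and your argument---quotienting the preorder $(\moM,\preceq)$ by its symmetric part and invoking the Proposition to identify that symmetric part with the partition of $\moM$ into $\moM^\times$-cosets---is precisely the intended derivation. The well-definedness and antisymmetry checks you supply are the only details the paper leaves implicit, and you place the use of left-cancellativity exactly where it belongs.
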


\begin{rem}
If left cancellation is replaced by right cancellation, then one has
$x\moM=y\moM$ if, and only if, there exists $z\in \moM^\times$ such that $y=zx$.
\end{rem}


\subsection{$1$-generated monoids as posets}
\label{ss:1genpos}

\begin{prop}
\label{prop:poset}
Let $\moM$ be a connected $\N_0$-graded monoid.
For $x,y\in \moM$ one has $x\moM=y\moM$ if, and only if, $x=y$.
\end{prop}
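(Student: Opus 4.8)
The plan is to establish the nontrivial implication, since $x=y$ trivially gives $x\moM=y\moM$. Suppose therefore that $x\moM=y\moM$. First I would unpack this equality into two divisibility relations: from $x\in x\moM=y\moM$ there exists $z\in\moM$ with $x=yz$, and symmetrically from $y\in y\moM=x\moM$ there exists $w\in\moM$ with $y=xw$. These are precisely the relations $x\preceq y$ and $y\preceq x$ of \S\ref{ss:monpos}.

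Next I would apply the grading homomorphism $|\argu|\colon\moM\to\N_0$ to both equations. As it is a homomorphism of monoids, one obtains
\begin{equation*}
|x|=|y|+|z|,\qquad |y|=|x|+|w|.
\end{equation*}
Adding these equalities forces $|z|+|w|=0$ in $\N_0$, whence $|z|=|w|=0$; that is, $z,w\in\moM_0$.

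Finally I would invoke connectedness, which by definition means $\moM_0=\{1\}$. Thus $z=w=1$, and substituting $z=1$ into $x=yz$ yields $x=y$, as desired.

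There is no genuine obstacle here: the argument is a short computation. The only points requiring care are the observation that the hypothesis $x\moM=y\moM$ encodes \emph{mutual} divisibility ($x\preceq y$ and $y\preceq x$ simultaneously), and that the $\N_0$-grading together with connectedness pins both divisors to the unit. I note in passing that this statement can also be deduced from the preceding proposition together with Fact~\ref{fact:monvon} (a connected $\N_0$-graded monoid satisfies $\moM^\times=\{1\}$), but that route presupposes left cancellation; the grading argument above is more transparent and dispenses with that hypothesis entirely.
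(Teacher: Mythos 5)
Your proof is correct and follows essentially the same argument as the paper: extract $x=yz$, $y=xw$ from the ideal equality, apply the grading homomorphism to force $|z|=|w|=0$, and use connectedness ($\moM_0=\{1\}$) to conclude $z=w=1$. Your closing observation that this grading argument needs no cancellation hypothesis is accurate and consistent with the paper's use of the proposition.
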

\begin{proof}
Suppose $x\moM=y\moM$, for $x,y\in \moM$.
Then there exist $z,w\in \moM$ such that $x=yz$ and $y=xw$,
so $|x|=|y|+|z|$ and $|y|=|x|+|w|$. 
Thus $|z|=0=|w|$. 
Since $\moM$ is connected, this implies $z=1=w$.
\end{proof}

As a consequence one obtains the following.

\begin{cor}
\label{cor:poset}
Let $\moM$ be a $1$-generated $\N_0$-graded monoid. 
Then $(\moM,\preceq)$ is a poset. If $\moM$ be finitely $1$-generated,
then $(\moM,\preceq)$ is a noetherian poset.
\end{cor}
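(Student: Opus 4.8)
The plan is to establish the two assertions separately. For the first I verify the three poset axioms for the divisibility preorder $\preceq$ on $\moM$ defined in \S~\ref{ss:monpos}. Reflexivity holds because $x = x\cdot 1$ shows $x \preceq x$. Transitivity follows by composing factorizations: if $x \preceq y$ and $y \preceq z$, choose $w, v \in \moM$ with $x = yw$ and $y = zv$; then $x = z(vw)$, so $x \preceq z$.

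The substantive poset axiom is antisymmetry, and here I would reduce to Proposition~\ref{prop:poset}. Suppose $x \preceq y$ and $y \preceq x$, so that $x\moM \subseteq y\moM$ and $y\moM \subseteq x\moM$, i.e. $x\moM = y\moM$. Since $\moM$ is $1$-generated it is in particular connected (see \S~\ref{ss:1gen}), so the hypotheses of Proposition~\ref{prop:poset} are met and that proposition yields $x = y$. This establishes that $(\moM, \preceq)$ is a poset.

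For the noetherian claim, assume in addition that $\moM$ is finitely $1$-generated and fix $\tau \in \moM$. For any $y \in \dC_\tau$ one has $\tau \preceq y$, hence $\tau = yz$ for some $z \in \moM$; applying the grading $|\argu|\colon\moM\to\N_0$ gives $|\tau| = |y| + |z|$, so $|y| \le |\tau|$. Thus $\dC_\tau \subseteq \bigcup_{0 \le k \le |\tau|} \moM_k$. Because the canonical map $\phi_\moM\colon\euF\langle\moM_1\rangle\to\moM$ is a degree-preserving surjection, each $\moM_k$ is the image of the set of length-$k$ words in the free monoid, and as $\moM_1$ is finite this yields $\card(\moM_k) \le \card(\moM_1)^k$. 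Summing over $k \le |\tau|$ shows $\card(\dC_\tau) < \infty$, which is exactly the noetherian condition.

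I do not expect a genuine obstacle here: both parts reduce to facts already in place, namely the connectedness of $1$-generated monoids together with Proposition~\ref{prop:poset} for antisymmetry, and an elementary degree count for the noetherian property. The only step deserving mild attention is the finiteness of each graded piece $\moM_k$, where surjectivity of $\phi_\moM$ is used to transfer the bound $\card(\moM_1)^k$ on words of length $k$ from the free monoid to $\moM$.
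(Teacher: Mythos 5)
Your proof is correct and follows exactly the route the paper intends: the paper states this corollary as an immediate consequence of Proposition~\ref{prop:poset} (antisymmetry via $x\moM=y\moM\Rightarrow x=y$, using that $1$-generated implies connected), and your degree count $\card(\moM_k)\le\card(\moM_1)^k$ via the surjection $\phi_\moM$ supplies the finiteness of cocones needed for the noetherian claim. You have simply written out the details the paper leaves implicit.
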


\begin{rem}
\label{rem:tilno}
The following example shows that the universal boundary $\partial\moM$
is in general different from the $\sim$-boundary $\wtM$.
\label{ex:stup}
Let $\moM=\langle x,y,z \mid xz=zx\,\rangle$. Consider
\begin{equation}
\label{eq:ex1}
\begin{aligned}
f_1\colon& \N\to M,\qquad f_1(n)=(xz)^n,\\
f_2\colon& \N\to M,\qquad f_2(n)=x^n,\\
f_3\colon& \N\to M,\qquad f_3(n)=z^n.
\end{aligned}
\end{equation}
Then $f_2\succeq f_1\preceq f_3$. Hence $\pi(f_1)=\pi(f_2)=\pi(f_3)\in\wtM$,
and $\pi\colon\partial M\to\wtM$ is not injective.
\end{rem}

\subsection{Abelian semigroups generated by idempotents}
\label{ss:sgrpid}
Let $E$ be an abelian semigroup being generated by a set of 
elements $\Sigma\subseteq E$ satisfying $\sigma^2=\sigma$ for
all $\sigma\in\Sigma$, i.e., all elements of $\Sigma$ are idempotents.
Then every element $u\in E$ is an idempotent, and one may define
a partial order "$\preceq$" on $E$ by
\begin{equation}
\label{eq:pord}
u\preceq v \qquad \Longleftrightarrow \qquad u\cdot v=v,
\end{equation}
for $u,v\in E$. Let $\caR=\{\,(u,v)\in \Sigma\times \Sigma\mid u\preceq v\,\}$.
By definition, one has
\begin{equation}
\label{eq:E1}
E=\{\,u=\sigma_1\cdots\sigma_r\mid \sigma_i\in\Sigma\,\}.
\end{equation}
Hence
\begin{equation}
\label{eq:E2}
E\simeq \euF^{\ab}(\Sigma)/R,
\end{equation}
where $\euF^{\ab}(\Sigma)$ is the free abelian semigroup over the set $\Sigma$,
and $R$ is the relation
\begin{equation}
\label{eq:E3}
R=\{\, (uv,v) \mid (u,v)\in\caR\,\}\subseteq \euF^{\ab}(\Sigma)\times \euF^{\ab}(\Sigma),
\end{equation}
i.e., $E=\euF^{\ab}(\Sigma)/R^\sim$, where $R^\sim$ is the equivalence relation on
$\euF^{\ab}(\Sigma)$ generated by the set $R$.
Let
\begin{equation}
\whE=\{\,\chi\colon E\to\{0,1\}\mid \text{$\chi$ a semigroup homomorphism, $\chi(0)=0$, $\chi\not\equiv 0$}
\,\}
\end{equation}
Then $\whE$ coincides with the set of characters of the $C^\ast$-algebra $C^\ast(E)$
generated by $E$ (satisfying $e^\ast=e$ for all $e\in E$), and hence carries naturally
the structure of a compact topological space. By construction, $\whE$ can be identified with 
a subset of $\caF(\Sigma,\{0,1\})$ - the set of functions from $\Sigma$ to $\{0,1\}$. In more detail,
\begin{equation}
\label{eq:ident1}
\whE=\{\,\phi\in\caF(\Sigma,\{0,1\})\mid \forall (u,v)\in\caR:\ \phi(v)=\phi(u)\cdot\phi(v)\,\}
\end{equation}
Thus identifying $\euF(\Sigma,\{0,1\})$ with $\{0,1\}^{\Sigma}$, one obtains that
\begin{equation}
\label{eq:ident2}
\whE=\{\,(\eta_\sigma)_{\sigma\in\Sigma}\in\{0,1\}^{\Sigma}\mid\forall (u,v)\in\caR:
\sigma_v=\sigma_u\cdot\sigma_v\,\}.
\end{equation}


\subsection{The semigroup of idempotents generated by a set of subset of a set}
\label{ss:semiset}
Let $X$ be a set, and let $S\subseteq\euP(X)$ be a set of subsets of $X$.
Then $S$ generates an algebra of sets $\caA(S)\subseteq\euP(X)$, i.e., the sets of $\caA(S)$
consist of the finite intersections of sets in $S$. Then
\begin{equation}
\label{eq:semiset}
E(S)=\langle\,I_A\mid A\in\caA(S)\,\rangle\subseteq\euF(X,\{0,1\})
\end{equation}
is an abelian semigroup being generated by
the set of idempotents
\begin{equation}
\label{eq:semiset2}
\Sigma=\{\,I_Y\mid Y\in S\,\}.
\end{equation}
Moreover, by \eqref{eq:ident1}, one has
\begin{equation}
\label{eq:semiset3}
\whE(S)=\{\,\phi\in\caF(S,\{0,1\})\mid \forall U,V\in S, V\subseteq U: \phi(V)=
\phi(U)\cdot \phi(V)\,\}
\end{equation}


\subsection{The Laca-boundary of a monoid}
\label{ss:lacab}
Let $\moM$ be a $1$-generated monoid. Then one chooses 
\begin{equation}
\label{eq:laca1}
S=\{\, \omega\cdot \moM\mid\omega\in \moM\,\}
\end{equation}
to consist of all principal right ideals.
For short we call the compact set $\eth \moM=\whE(S)$ for $S$ as in \eqref{eq:laca1}
the {\emph{Laca boundary}} of $\moM$. 
For an infinite word $\uomega=(\omega_k)\in\cuD(\N,\moM,\succeq)$ one defines the element
$\chi_{\uomega}\in\whE(S)$ by $\chi_{\uomega}(\tau \moM)=1$ if, and only if,
there exists $k\in\N$ such that $\omega_k\in\tau \moM$, i.e., $\tau\succeq\omega_k$, and thus
$\tau\succeq\uomega$. This yields a map
\begin{equation}
\label{eq:map1}
\chi_{\cdot}\colon \cuD(\N,\moM,\preceq)\longrightarrow \whE(S)
\end{equation}
(cf. \cite[\S~2.2]{Lietal:Cstar}). By definition, it has the following property:

\begin{prop}
\label{prop:val}
For $\uomega=(\omega_k)\in\cuD(\N,\moM,\preceq), \tau\in \moM$, one has
$\chi_{\uomega}(\tau \moM)=1$ if, and only if, $\tau\succeq\uomega$.
In particular, one has $\chi_{\ueta}=\chi_{\uomega}$ if, and only if, $\ueta\approx\uomega$, and hence
$\chi_\cdot$ induces an injective map
\begin{equation}
\label{eq:map2}
\bchi_\cdot\colon \partial \moM\longrightarrow\eth \moM.
\end{equation}
\end{prop}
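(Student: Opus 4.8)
The plan is to prove the three assertions of Proposition~\ref{prop:val} in order, and then deduce that the map \eqref{eq:map1} descends to the injective map \eqref{eq:map2}. The key tool throughout is the explicit definition of $\chi_{\uomega}$ together with the characterization of the relation $\preceq$ on $\cuD(\N,\moM,\preceq)$ given in \eqref{eq:rel} and the definition of $\whE(S)$ in \eqref{eq:semiset3}.

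First I would establish the main characterization: for $\uomega=(\omega_k)$ and $\tau\in\moM$ one has $\chi_{\uomega}(\tau\moM)=1$ if, and only if, $\tau\succeq\uomega$. One direction is essentially the definition, since $\chi_{\uomega}(\tau\moM)=1$ means there exists $k\in\N$ with $\tau\succeq\omega_k$. The content is to argue that the existence of such a single index $k$ is equivalent to $\tau\succeq\uomega$ in the sense of \eqref{eq:rel}. Unwinding \eqref{eq:rel} for the constant sequence $c_\tau$ (recall $\tau\in\moM$ is identified with $[c_\tau]\in\bmoM$), the relation $\uomega\preceq c_\tau$ reads: for all $n\in\N$ there exists $k_n$ with $\omega_{k_n}\preceq\tau$, i.e. $\tau\succeq\omega_{k_n}$. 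Since $(\omega_k)$ is decreasing, $\tau\succeq\omega_{k_0}$ for one index forces $\tau\succeq\omega_k$ for all larger $k$, so the existence of a single good index and the ``for all $n$'' condition coincide. This is the step to write out carefully, but it is a direct manipulation of the monotonicity of the sequence.

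Next I would verify that $\chi_{\uomega}$ as defined genuinely lies in $\whE(S)$, i.e. satisfies the compatibility $\phi(V)=\phi(U)\cdot\phi(V)$ for $V\subseteq U$ in $S$; writing $V=\tau\moM\subseteq U=\sigma\moM$ means $\tau\succeq\sigma$, and then $\tau\succeq\omega_k$ implies $\sigma\succeq\omega_k$ by transitivity, giving $\chi_{\uomega}(\sigma\moM)=1$ whenever $\chi_{\uomega}(\tau\moM)=1$, which is exactly the required relation. With the characterization in hand, the equivalence $\chi_{\ueta}=\chi_{\uomega}\iff\ueta\approx\uomega$ follows: equality of the two characters means $\tau\succeq\ueta\iff\tau\succeq\uomega$ for every $\tau\in\moM$, and I would translate this mutual-domination statement on all constant test sequences back into $\ueta\preceq\uomega$ and $\uomega\preceq\ueta$ via \eqref{eq:rel}, hence $\ueta\approx\uomega$ by \eqref{eq:rel2}; the converse is immediate from the first assertion since $\approx$-equivalent sequences dominate the same $\tau$.

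I expect the main obstacle to be the careful handling of the equivalence relation $\approx$ versus the raw relation $\sim$ of \eqref{eq:rel2}, since $\approx$ is the \emph{generated} equivalence relation and its second disjunct involving the minimum $c_m$ must be accounted for; one must check that elements of $\partial\moM$ (for which no minimum in $\moM$ exists) are not accidentally identified or separated by this clause. Concretely, the cleanest route is to prove that on $\cuD(\N,\moM,\preceq)$ the conjunction $\uomega\preceq\ueta\wedge\ueta\preceq\uomega$ already generates $\approx$ on the boundary part, so that ``$\chi_{\ueta}=\chi_{\uomega}$'' detects exactly $\approx$. Once injectivity of $\bchi_\cdot$ on $\partial\moM$ is secured, the induced map \eqref{eq:map2} is well defined and injective, which is the claim.
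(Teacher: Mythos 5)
Your proposal is correct and follows essentially the same route as the paper's proof: the first assertion is the definition of $\chi_{\uomega}$ unwound against the constant sequence $c_\tau$, and the equivalence $\chi_{\ueta}=\chi_{\uomega}\Longleftrightarrow\ueta\approx\uomega$ is obtained exactly as in the paper, by testing the two characters on the principal right ideals $\eta_k\moM$ and $\omega_n\moM$ to extract mutual domination, and conversely by noting that mutual domination forces the two $\{0,1\}$-valued characters to agree. Your side remark that one should check $\chi_{\uomega}\in\whE(S)$ is a detail the paper leaves implicit (it is absorbed into the definition in \S~\ref{ss:lacab} and the reference to \cite{Lietal:Cstar}).

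The one point where you go beyond the printed proof is your concern that $\approx$ is the equivalence relation \emph{generated} by $\sim$, whose second disjunct involves $c_m$ with $m=\min(\uomega)$, whereas the argument only produces and uses mutual domination. This concern is legitimate --- the paper silently identifies the two relations --- and your proposed resolution is the right one. It can be carried out as follows: for a $1$-generated $\N_0$-graded monoid, a decreasing sequence whose class lies in $\partial\moM$ has non-decreasing and unbounded degrees $|\omega_k|$, so it admits no lower bound in $\moM$ at all; hence the $c_m$-clause of \eqref{eq:rel2} never applies to such a sequence, nor to any sequence related to it by domination (a lower bound of $\ueta$ together with $\ueta\preceq\uomega$ yields a lower bound of $\uomega$), so every $\sim$-chain starting from a boundary sequence consists of mutual-domination steps only. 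Since mutual domination is already an equivalence relation, $\approx$ restricted to the boundary part coincides with it, and your argument closes completely.
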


\begin{proof}
The first part has already been established before. 
Let $\ueta=(\eta_k)$. Then by the first part, $\uomega\succeq\ueta$ implies that
for all $\tau\in\moM$ one has
\begin{equation}
\label{eq:map3}
\chi_{\uomega}(\tau\moM)=1\Longrightarrow \chi_{\ueta}(\tau\moM)=1.
\end{equation}
Thus as $\image(\chi_{\omega})\subseteq\{0,1\}$ one concludes that
$\uomega\succeq\ueta$ and $\uomega\preceq\ueta$ implies that $\chi_{\uomega}=\chi_{\ueta}$.
On the other hand $\chi_{\ueta}=\chi_{\uomega}$ implies that  $1=\chi_{\ueta}(\eta_k \moM)=\chi_{\uomega}(\eta_k \moM)$ for all $k\in\N$.
In particular, $\ueta\succeq\uomega$. Interchanging the roles of
$\ueta$ and $\uomega$ yields $\uomega\succeq\ueta$, and thus $\ueta\approx\uomega$ (cf. section~\ref{ss:compl}).
The last part is a direct consequence of the definition of $\partial \moM$.
\end{proof}

The following theorem shows that for a $1$-generated $\N_0$-graded monoid $\moM$
its universal boundary
$\partial\moM$ with the fine topology is a totally-disconnected compact space.

\begin{thm}
\label{thm:princ}
 The map $\bchi_{\cdot}\colon (\partial \moM,\caT_f(\bmoM))\longrightarrow \eth \moM$ is a homeomorphism.
\end{thm}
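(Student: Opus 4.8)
The plan is to prove that $\bchi_{\cdot}$ is a topological embedding whose image is all of $\eth\moM=\whE(S)$, the surjectivity being the only substantial point. Injectivity is already supplied by Proposition~\ref{prop:val}, so I would first record that $\bchi_{\cdot}$ is an embedding by matching subbases. The product topology on $\eth\moM\subseteq\{0,1\}^S$ has as a subbasis the sets $U_\tau^1=\{\phi\mid\phi(\tau\moM)=1\}$ and $U_\tau^0=\{\phi\mid\phi(\tau\moM)=0\}$, $\tau\in\moM$. By Proposition~\ref{prop:val} one has $\chi_{\uomega}(\tau\moM)=1$ if and only if $\tau\succeq\uomega$, i.e. if and only if $[\uomega]\in\cC_\tau(\bmoM)$; hence
\begin{equation*}
\bchi_{\cdot}^{-1}(U_\tau^1)=\cC_\tau(\bmoM)\cap\partial\moM,\qquad
\bchi_{\cdot}^{-1}(U_\tau^0)=\cC_\tau(\bmoM)^C\cap\partial\moM.
\end{equation*}
These are precisely the traces on $\partial\moM$ of the subbasic sets of the fine topology in \eqref{eq:finetop1} (the remaining subbasic sets $\{\tau\}$ meet $\partial\moM=\bmoM\setminus\image(i_\moM)$ trivially). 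Consequently $\caT_f(\bmoM)$ restricted to $\partial\moM$ is exactly the initial topology induced by $\bchi_{\cdot}$, so $\bchi_{\cdot}$ is at once continuous and open onto its image, i.e. a homeomorphism onto $\bchi_{\cdot}(\partial\moM)$.

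It therefore remains to prove surjectivity, which I expect to be the heart of the matter. Given $\phi\in\whE(S)$, set $P=\{\tau\in\moM\mid\phi(\tau\moM)=1\}$. Condition~\eqref{eq:semiset3} says exactly that $P$ is an up-set for $\preceq$ (if $v\preceq u$ and $v\in P$ then $u\in P$), and since $\phi\not\equiv0$ while $1$ is the maximum of $(\moM,\preceq)$ one gets $1\in P$. The goal is to produce a decreasing sequence $\uomega=(\omega_k)$ with $[\uomega]\in\partial\moM$ and $\chi_{\uomega}=\phi$, equivalently with $P=\bigcup_{k}\dC_{\omega_k}$. I would build $\uomega$ level by level along the grading: starting from $\omega_0=1$ and, having chosen $\omega_k\in P\cap\moM_k$, use $1$-generation to write $\omega_k\moM\setminus\{\omega_k\}=\bigcup_{\sigma\in\moM_1}\omega_k\sigma\moM$ and select a generator $\sigma$ with $\omega_k\sigma\in P$, putting $\omega_{k+1}=\omega_k\sigma$. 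Finiteness of $\moM_1$ then allows me to arrange all admissible choices into a finitely branching tree and, by K\"onig's lemma, to extract an infinite strictly decreasing branch $\omega_0\succ\omega_1\succ\cdots$, which by Fact~\ref{fact:strict} represents a genuine boundary point $[\uomega]\in\partial\moM$.

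The main obstacle is that both of the choices above rest on the character $\phi$ being \emph{tight} (equivalently, that $F_\phi=\phi^{-1}(1)$ is an ultrafilter on the semilattice $E(S)$), and this is exactly where the argument must do real work. First, one must show that an admissible $\sigma$ always exists, i.e. that $\phi(\omega_k\moM)=1$ forces $\phi(\omega_k\sigma\moM)=1$ for some $\sigma$, which is the content of tightness against the finite cover of $\omega_k\moM$ by the $\omega_k\sigma\moM$. Second, and more delicately, one must verify the inclusion $P\subseteq\bigcup_k\dC_{\omega_k}$ (the reverse being immediate from $\omega_k\in P$ and the up-set property): here the filter property of $\phi$ must be used to rule out that $\phi$ simultaneously sees two incompatible branches, by showing that for $\tau\in P$ the relation $\phi(\tau\moM\cap\omega_k\moM)=\phi(\tau\moM)\cdot\phi(\omega_k\moM)=1$ forces $\tau$ and some $\omega_k$ to be comparable, whence $\tau\succeq\omega_k$. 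Once surjectivity is secured, combining it with the embedding of the first paragraph yields that $\bchi_{\cdot}$ is a continuous bijection which is open onto its image, hence a homeomorphism; alternatively, since $\whE(S)$ is compact Hausdorff (a closed subspace of $\{0,1\}^S$ by Tychonoff), the matched subbases make $(\partial\moM,\caT_f(\bmoM))$ compact, and a continuous bijection from a compact space onto a Hausdorff space is automatically a homeomorphism.
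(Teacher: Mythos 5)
Your first paragraph is correct, and it is essentially the paper's own argument for the topological half of the theorem: injectivity is quoted from Proposition~\ref{prop:val}, and the identities $\bchi_{\cdot}^{-1}(U_\tau^1)=\cC_\tau(\bmoM)\cap\partial\moM$ and $\bchi_{\cdot}^{-1}(U_\tau^0)=\cC_\tau(\bmoM)^C\cap\partial\moM$, where $U_\tau^{\eps}=\{\,\eta\in\whE(S)\mid \eta(\tau\moM)=\eps\,\}$, match the subbasis \eqref{eq:finetop1} of the fine topology, so $\bchi_{\cdot}$ is an embedding. The divergence is at surjectivity. The paper disposes of it in one line by importing it from the literature (``it is well known that $\chi$ is surjective, cf.\ \cite[Lemma~2.3]{Lietal:Cstar}''), whereas you set out to prove it directly; but your second and third paragraphs are a program, not a proof. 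You yourself flag the two essential steps --- that $\phi(\omega_k\moM)=1$ forces $\phi(\omega_k\sigma\moM)=1$ for some $\sigma\in\moM_1$, and that $P\subseteq\bigcup_k\dC_{\omega_k}$ --- as resting on tightness of $\phi$ and being ``exactly where the argument must do real work'', and that work is never carried out. So the heart of the theorem is missing from your write-up.

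Moreover, the gap is genuine and cannot be closed in the form you propose, for two concrete reasons. First, tightness is not a consequence of membership in $\eth\moM=\whE(S)$ as defined in the paper: for any $m\in\moM$ the evaluation character $\phi_m$, given by $\phi_m(\tau\moM)=1$ iff $m\in\tau\moM$, satisfies \eqref{eq:semiset3} (indeed it extends to a character of $E(S)$ as in \S\ref{ss:sgrpid}), yet it is not tight --- $\phi_m(m\moM)=1$ while $\phi_m(m\sigma\moM)=0$ for every $\sigma\in\moM_1$, since $m=m\sigma z$ is impossible in a graded monoid --- and it is not in $\image(\bchi_{\cdot})$, because a class in $\partial\moM$ has a strictly decreasing representative (Fact~\ref{fact:strict}) whose terms have unbounded degree, whereas $\phi_m(\tau\moM)=1$ forces $|\tau|\leq|m|$. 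Thus the property you need is actually false for $\whE(S)$ as defined, and your scheme can at best identify $\image(\bchi_{\cdot})$ with the set of tight characters; the theorem additionally requires every element of $\eth\moM$ to be tight, which is exactly what interpreting $\eth\moM$ as the tight (boundary) part of the spectrum --- the setting of the cited \cite[Lemma~2.3]{Lietal:Cstar} --- provides. (The paper's own inference ``$\chi$ is surjective, and thus $\bchi$ is surjective'' quietly skips the same point: the $\phi_m$ are hit only by constant sequences, whose classes lie in $\image(i_\moM)$, not in $\partial\moM$.) Second, even at characters that genuinely lie in the image, your K\"onig-tree construction with the ``comparability'' mechanism fails: the filter $\phi^{-1}(1)$ need not be a chain, and multiplicativity does not force comparability. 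In $\langle\,x,y\mid xy=yx\,\rangle^+$ take $\phi=\chi_{\uomega}$ with $\uomega=((xy)^n)_{n\in\N}$; then $\phi(x^k\moM)=\phi(y^k\moM)=1$ for all $k$ although $x^k$ and $y^j$ are incomparable, so your tree of admissible choices contains the branch $(x^k)_{k\in\N}$, whose character vanishes on $y\moM$ and hence differs from $\phi$. Surjectivity requires selecting a branch \emph{cofinal} in the filter (a diagonalization using directedness), not merely an infinite branch; without either proving this or importing the cited lemma as the paper does, the proof is incomplete.
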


\begin{proof}
It is well known that $\chi$ is surjective (cf. \cite[Lemma~2.3]{Lietal:Cstar}), and thus $\bchi$ is surjective. By Proposition~\ref{prop:val}, $\bchi$ is injective, and hence $\bchi$ is a bijection. The sets
\begin{equation}
\label{eq:subbase}
U_\tau^{\eps}=\{\,\eta\in\whE(\moM)\mid\eta(\tau\moM)=\eps\,\},\qquad \tau\in\moM,\qquad\eps\in\{0,1\}
\end{equation}
form a subbasis of the topology of $\whE(\moM)$, and 
\begin{equation}
\label{eq:map4}
\bchi^{-1}(U_\tau^1)=C_\tau(\bar{\moM}))\cap\partial\moM
\end{equation}
by \eqref{eq:map3}. Hence
$\bchi^{-1}(U_\tau^0)=C_\tau(\bar{\moM}))^C\cap\partial\moM$ and this yields the claim.
\end{proof}

The proof of Theorem~\ref{thm:princ} has also shown that
\begin{equation}
\label{eq:chiback}
\bchi^{-1}\colon\eth\moM\longrightarrow (\partial\moM,\caT_c(\bmoM))
\end{equation}
is a bijective and continuous map. This has the following consequence
(cf. \cite[\S~ 9.4, Corollary 2]{bou:top}).

\begin{prop}
\label{prop:T2}
Let $\moM$ be a $1$-generated $\N_0$-graded monoid such that
$(\partial\moM,\caT_c(\bmoM))$ is Hausdorff. Then $\moM$ is $\caT$-regular.
\end{prop}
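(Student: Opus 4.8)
The plan is to deduce the statement from the elementary topological fact that a continuous bijection from a compact space onto a Hausdorff space is automatically a homeomorphism, which is exactly the content of the Bourbaki corollary \cite[\S~9.4, Corollary~2]{bou:top} cited just before the proposition. In fact, all the genuine work has already been carried out in Theorem~\ref{thm:princ} and in the remark following its proof; what remains is only to assemble three ingredients and invoke this standard lemma.

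The first ingredient is the continuous bijection itself. The discussion after the proof of Theorem~\ref{thm:princ} records that
\[
\bchi^{-1}\colon\eth\moM\longrightarrow(\partial\moM,\caT_c(\bmoM))
\]
is bijective and continuous. The second ingredient is compactness of its source: by \S\ref{ss:lacab} the Laca boundary $\eth\moM=\whE(S)$ is compact. The third ingredient is Hausdorffness of its target, which is precisely the hypothesis placed on $(\partial\moM,\caT_c(\bmoM))$.

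Granting these, the cited corollary applies to $\bchi^{-1}$ and shows that it is a homeomorphism. Since Theorem~\ref{thm:princ} already provides a homeomorphism $\bchi\colon(\partial\moM,\caT_f(\bmoM))\to\eth\moM$, the composite $\bchi^{-1}\circ\bchi$ is a homeomorphism as well. But by construction this composite is the identity map $\iid_{\partial\moM}\colon(\partial\moM,\caT_f(\bmoM))\to(\partial\moM,\caT_c(\bmoM))$ of \eqref{eq:idtop}, whose continuity is in any case guaranteed by the inclusion $\caT_c(\bmoM)\subseteq\caT_f(\bmoM)$ of Fact~\ref{fact:fintop}(b). Hence \eqref{eq:idtop} is a homeomorphism, i.e., $\moM$ is $\caT$-regular.

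There is essentially no obstacle here beyond correctly verifying the hypotheses of the compactness lemma; the substance of the argument lies entirely in Theorem~\ref{thm:princ}, which simultaneously supplies the compactness of the fine-topology boundary (via its identification with $\eth\moM$) and the continuity of $\bchi^{-1}$. The only point demanding a little care is to keep straight in which direction each map is continuous: the identity runs from the finer topology $\caT_f(\bmoM)$ to the coarser topology $\caT_c(\bmoM)$, so the nontrivial implication being extracted from the hypothesis is the continuity of the inverse, and this is exactly what the compact-to-Hausdorff principle delivers.
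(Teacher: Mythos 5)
Your proof is correct and follows exactly the paper's (largely implicit) argument: the paper also deduces the proposition from the continuity and bijectivity of $\bchi^{-1}\colon\eth\moM\to(\partial\moM,\caT_c(\bmoM))$ recorded in \eqref{eq:chiback}, the compactness of $\eth\moM$, and the cited compact-to-Hausdorff corollary of Bourbaki. Your write-up simply makes explicit the assembly of these ingredients that the paper leaves to the reader.
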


In contrast to Proposition~\ref{prop:homs} one has the following property for the
Laca boundary of monoids.

\begin{prop}
\label{prop:hom}
Let $\phi\colon\moQ\longrightarrow\moM$ be a surjective homomorphism of connected
$\N_0$-graded monoids. Then $\phi$ induces an injective continuous map
$\phi_{\eth}\colon\eth\moM\longrightarrow\eth\moQ$.
\end{prop}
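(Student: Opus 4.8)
The plan is to construct $\phi_{\eth}$ as the natural pullback of characters and then verify the three required properties --- well-definedness (membership in $\eth\moQ$), injectivity, and continuity --- each of which is essentially a direct unravelling of the definitions. Recall that $\eth\moM=\whE(S_{\moM})$ with $S_{\moM}=\{\,m\moM\mid m\in\moM\,\}$, so that a point of $\eth\moM$ is a function $\chi\colon S_{\moM}\to\{0,1\}$ satisfying $\chi(V)=\chi(U)\cdot\chi(V)$ whenever $V\subseteq U$ (cf.\ \eqref{eq:semiset3}); analogously for $\eth\moQ$. First I would define, for $\chi\in\eth\moM$ and $q\in\moQ$,
\[
(\phi_{\eth}(\chi))(q\moQ)=\chi(\phi(q)\moM).
\]
Since $\moQ$ is connected, Proposition~\ref{prop:poset} gives $q\moQ=q'\moQ\Rightarrow q=q'$, so the right-hand side depends only on the ideal $q\moQ$ and the assignment is well-defined as a function on $S_{\moQ}$.

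Next I would check that $\phi_{\eth}(\chi)$ actually lies in $\eth\moQ$. If $q'\moQ\subseteq q\moQ$, then $q'=qw$ for some $w\in\moQ$, whence $\phi(q')=\phi(q)\phi(w)$ and therefore $\phi(q')\moM\subseteq\phi(q)\moM$; the character condition for $\chi$ then yields $\chi(\phi(q')\moM)=\chi(\phi(q)\moM)\cdot\chi(\phi(q')\moM)$, which is precisely the character condition for $\phi_{\eth}(\chi)$. That $\phi_{\eth}(\chi)$ is not identically zero follows from $(\phi_{\eth}(\chi))(\moQ)=\chi(\phi(1_{\moQ})\moM)=\chi(\moM)=1$, using that $\phi$ is a homomorphism of monoids and $\chi\not\equiv0$.

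Injectivity then comes directly from the surjectivity of $\phi$: if $\phi_{\eth}(\chi_1)=\phi_{\eth}(\chi_2)$, then for every $m\in\moM$ choose $q\in\moQ$ with $\phi(q)=m$, so that $\chi_1(m\moM)=(\phi_{\eth}(\chi_1))(q\moQ)=(\phi_{\eth}(\chi_2))(q\moQ)=\chi_2(m\moM)$; as $m$ ranges over $\moM$ this forces $\chi_1=\chi_2$. Finally, for continuity I would use that, exactly as in the proof of Theorem~\ref{thm:princ}, the sets $U_q^\eps=\{\,\eta\in\eth\moQ\mid\eta(q\moQ)=\eps\,\}$ for $q\in\moQ$ and $\eps\in\{0,1\}$ form a subbasis of the topology of $\eth\moQ$. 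It then suffices to compute
\[
\phi_{\eth}^{-1}(U_q^\eps)=\{\,\chi\in\eth\moM\mid\chi(\phi(q)\moM)=\eps\,\}=U_{\phi(q)}^\eps,
\]
which is a subbasic open set of $\eth\moM$; hence $\phi_{\eth}$ is continuous.

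I expect no single step to be a serious obstacle: the construction is forced once one recognises $\eth(\argu)$ as a \emph{contravariant} assignment given by pullback of characters (in contrast to the covariant $\partial\bphi$ of Proposition~\ref{prop:homs}), and the subbasic description of the topology reduces continuity to a one-line preimage computation. The only points demanding care are the bookkeeping distinction between an ideal $q\moQ$ and a chosen representative $q$ (handled by connectedness via Proposition~\ref{prop:poset}) and the observation that injectivity genuinely requires the \emph{surjectivity} of $\phi$, so that every principal right ideal of $\moM$ arises as $\phi(q)\moM$.
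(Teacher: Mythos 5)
Your proof is correct and takes essentially the same route as the paper: both construct $\phi_{\eth}$ as the pullback of characters along the order-preserving assignment $q\moQ\mapsto\phi(q)\moM$ on principal right ideals (well defined via Proposition~\ref{prop:poset}), with injectivity coming from the surjectivity of $\phi$ and continuity from a subbasis computation. The only differences are presentational: the paper factors the pullback through a semigroup homomorphism $\phi_E\colon E(\moQ)\to E(\moM)$ and uses surjectivity of $\phi_E$ to see that pullbacks of nonzero characters are nonzero, whereas you verify the defining condition \eqref{eq:semiset3} directly and obtain nonvanishing from $(\phi_{\eth}\chi)(\moQ)=\chi(\moM)=1$, while also spelling out the injectivity and continuity arguments that the paper leaves as ``straightforward to verify.''
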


\begin{proof}
By Proposition~\ref{prop:poset}, $\phi$ induces a map $\phi_{\Sigma}\colon\Sigma(\moQ)\to\Sigma(\moM)$ given by
\begin{equation}
\label{eq:phiS}
\phi_{\Sigma}(\omega\moQ)=\phi(\omega)\moM.
\end{equation}
Moreover, for $x,y\in\moQ$ one has $x\preceq y$, if and only if, $x\moQ\subseteq y\moQ$,
if and only if there exists $z\in\moQ$ such that $x=y\cdot z$.
From the last statement one concludes that $\phi_\Sigma(x\moQ)\subseteq
\phi_{\Sigma}(z\moQ)$. Thus, by \eqref{eq:E2}, $\phi_\Sigma$ induces a homomorphism of 
semigroups
\begin{equation}
\label{eq:phiE}
\phi_E\colon E(\moQ)\longrightarrow E(\moM),
\end{equation}
and thus a map
\begin{equation}
\label{eq:hEphio}
\phi_{\whE}^\circ\colon \whE(\moM)\cup\{0\}\longrightarrow \whE(\moQ)\cup\{0\}.
\end{equation}
If $\phi$ is surjective, then $\phi_E$ is surjective, and $\phi_{\whE}^\circ$ restricts
to a map 
\begin{equation}
\label{eq:hEphi}
\phi_{\whE}\colon\whE(\moM)\longrightarrow\whE(\moQ). 
\end{equation}
It is straightforward to verify that $\phi_{\whE}$ is continuous and injective.
\end{proof}

\section{Free monoids and trees}
\label{ss:freetree}
Let $\euF_n=\euF\langle x_1,\dots,x_n \rangle$ be the free monoid on $n$ generators.
Let $S=\{ x_1,\dots,x_n \}$ be the set of generators, and let $|\argu|\colon\euF_n\to\N_0$
be the grading morphisms, i.e., $|y|=1$ if and only if $y\in S$.
Then the Cayley graph $\Gamma(\euF_n, S)$ of $\euF_n$ is the graph defined by
\begin{align}
V&= \{\, x\mid x\in \euF_n\,\}\\
E&=\big\{\, (x,xx_i)\in V\times V\mid x\in \euF_n, x_i\in S \,\big\}.
\end{align}
One has two maps $o,t\colon E \to V$ defined by
\begin{align}
o\big((x,xx_i)\big)&=x;\\
t\big((x,xx_i)\big)&=xx_i.
\end{align}
Then $\Gamma(\euF_n, S)$ is an $n$-regular tree with root $1$ and all edges
pointing away from $1$. The graph $\Gamma(\euF_n,S)$ is not a graph in the sense
of  \cite[\S~2.1]{ser:trees}), but coincides with an orientation on the $n$-regular tree $T_n$.

\subsection{The boundary of the $n$-regular tree}
\label{ss:boundT}
The boundary $\der T_n$ of $T_n$ is the set of equivalence classes 
of infinite paths without backtracking under the relation $\sim$
defined by the shift, i.e. 
\begin{equation}
v_0 v_1 v_2\dots\sim v_1 v_2 v_3\dots
\end{equation}
We denote by $[v,w)$ the unique path starting at $v$ in the class $\omega$
and define 
\begin{equation}
I_v=\{\,\omega\in\der T_n\mid v\in[1,\omega) \,\}
\end{equation}
the {\it interval} of $\der T_n$ starting at $v$.
Then $\der T_n$ is compact with respect to the topology $\caT_I$ generated by $\{ I_v\}_{v\in V}$
(cf. \cite[p.~20, Exercise~1]{ser:trees}).

For any $[\rho]\in\der T_n$ there exists a unique ray 
$\rho=(e_k)_{k\in\N}$, $o(\rho)=o(e_1)=1$.
One can assign to $\rho$ the decreasing function 
$\omega_\rho\in \cuD(\N,\euF_n,\preceq)$ given by
$\omega_\rho(k)=t(e_k)$.
The map $\varphi\colon\der T_n\to\partial\euF_n$ given by 
\begin{equation}
\label{boundtree}
\varphi ([\rho])= [\omega_\rho]
\end{equation}
is a bijection. Hence one can identify $\der T_n$ with $\partial\euF_n$.

\subsection{The space $(\beuF_n,\caT_c(\beuF_n))$}
\label{ss:comp}
Every cone $C_\tau(\beuF_n)$ defines a rooted subtree $ T_\tau$ of $T_n$ satisfying
$\der T_\tau=\partial\euF_n\cap C_\tau(\beuF_n)$. Thus every covering
$\bigcup_{\tau\in  U} C_\tau(\beuF_n)\cap\partial\euF_n$ of the boundary
of $\der\euF_n$ by cones defines a forest $F=\bigcup_{\tau\in U} T_\tau$.
Let $F=\bigcup_{i\in I} F_i$ be the decomposition of $F$ in connected components.
Then $\der T_n=\der F=\bigsqcup_{i\in I} \der F_i$, where $\sqcup$ denotes disjoint union.
Hence the compactness of $\der T_n$ implies $|I|<\infty$. 

As $\der F_i\subseteq \der T_n$ is closed, and hence compact, a similar argument shows that there exist finitely many cones $C_{\tau_{i,j}}$, $1\leq j\leq r_i$, such that
$F_i=\bigcup_{1\leq j\leq r_i} T_{\tau_{i,j}}$.
Thus, if $\bigcup V$ is an open covering of $\beuF_n$ by open sets, it can be refined
to a covering $\bigcup U$, where $U$ consists either of a cone $C_\tau(\beuF_n)$ or of a
singleton set $\{\omega\}$, $\omega\in \euF_n$.
Let $\Lambda\subseteq T_n$ be the subtree being generated by the vertices $\tau_{i,j}$.
Then $\Lambda$ is a finite subtree, and the only vertices of $T_n$ not being covered by
$\bigcup_{i,j} C_{\tau_{i,j}}(\beuF_n))$ are contained in $V(\Lambda)$. This shows that
$(\beuF_n,\caT_c(\beuF_n))$ is a compact space.

\subsection{The space $(\bmoM,\caT_c(\bmoM))$}
Let $\moM$ be a $\caT$-regular  finitely $1$-generated monoid.  Then, by definition,
$(\der\moM,\caT_c(\bmoM))$ is a Hausdorff space, and hence  
$(\bmoM,\caT_c(\bmoM))$ is a Hausdorff space. By Proposition \ref{prop:homs},
the canonical mapping $\phi_\moM\colon\euF\to\moM$ (cf. \eqref{eq:canmon}) induces
a continuous surjective map $\bphi_{\moM}\colon\beuF\to\bmoM$. This shows the following.

\begin{prop}
\label{prop:Treg}
Let $\moM$ be a finitely $1$-generated $\N_0$-graded $\caT$-regular monoid.
Then $(\bmoM,\caT_c(\bmoM))$ is a compact space.
\end{prop}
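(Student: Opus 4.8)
The plan is to realise $(\bmoM,\caT_c(\bmoM))$ as a continuous surjective image of the completed free monoid $(\beuF,\caT_c(\beuF))$, which was already shown to be compact in \S\ref{ss:comp}, and then to feed in $\caT$-regularity to secure the Hausdorff axiom. Recall that in the sense of \cite{bou:top} a compact space is a quasi-compact Hausdorff space, so the statement splits into the two independent assertions that $\bmoM$ is quasi-compact and that it is Hausdorff; $\caT$-regularity will only be needed for the second.

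First I would establish quasi-compactness. Since $\moM$ is finitely $1$-generated, $\moM_1$ is finite, say of cardinality $n$, so the free monoid $\euF=\euF\langle\moM_1\rangle$ is isomorphic to $\euF_n$ and $(\beuF,\caT_c(\beuF))$ is compact by \S\ref{ss:comp}. The canonical homomorphism $\phi_\moM\colon\euF\to\moM$ of \eqref{eq:canmon} is surjective because $\moM$ is $1$-generated, and by Proposition~\ref{prop:homs} it induces a map $\bphi_\moM\colon\beuF\to\bmoM$ which is continuous for the cone topologies. I would then check that $\bphi_\moM$ is onto: given $[\uomega]\in\bmoM$ with $\uomega$ decreasing, write $\uomega(k+1)=\uomega(k)\cdot s_k$ with $s_k\in\moM$, lift $\uomega(1)$ and each $s_k$ arbitrarily along the surjection $\phi_\moM$, and form the corresponding decreasing sequence in $\euF$; its class maps to $[\uomega]$. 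Hence $\bmoM=\bphi_\moM(\beuF)$ is the continuous image of a quasi-compact space, so it is quasi-compact.

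Next I would prove the Hausdorff property. By Corollary~\ref{cor:poset} $(\moM,\preceq)$ is a noetherian poset, and by Theorem~\ref{thm:princ} together with $\caT$-regularity the subspace $(\partial\moM,\caT_c(\bmoM))$ is homeomorphic to the Laca boundary $\eth\moM$, hence Hausdorff. It remains to separate points of $\bmoM$ in the three possible cases. Two distinct interior points are separated because each $\{x\}$, $x\in\moM$, is open. For an interior point $x$ and a boundary point $\omega=[\uomega]$, with $\uomega$ strictly decreasing by Fact~\ref{fact:strict}, the grading forces $|\uomega(k)|\to\infty$, so for $k$ large one has $x\notin\cC_{\uomega(k)}(\bmoM)$, and $\{x\}$ together with $\cC_{\uomega(k)}(\bmoM)$ separates them. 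For two distinct boundary points $\omega,\tau$, Hausdorffness of the subspace $\partial\moM$ yields $a,b\in\moM$ with $\omega\in\cC_a(\bmoM)$, $\tau\in\cC_b(\bmoM)$ and $\cC_a(\bmoM)\cap\cC_b(\bmoM)\cap\partial\moM=\emptyset$. The point is that every nonempty cone $\cC_z(\bmoM)$, $z\in\moM$, already contains a boundary point: since $\moM_1\neq\emptyset$, the strictly decreasing sequence $z\succ zs_1\succ zs_1s_2\succ\cdots$ with $s_i\in\moM_1$ defines a class of $\partial\moM$ lying in $\cC_z(\bmoM)$. By \eqref{eq:cone1} the intersection $\cC_a(\bmoM)\cap\cC_b(\bmoM)$ is a union of cones $\cC_z(\bmoM)$ with $z\in S(a,b)$, so if it meets $\partial\moM$ trivially it must be empty; thus $\cC_a(\bmoM)$ and $\cC_b(\bmoM)$ are disjoint in all of $\bmoM$ and separate $\omega$ and $\tau$. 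Combining the three cases shows $\bmoM$ is Hausdorff, and with quasi-compactness this gives the claim.

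The routine part is quasi-compactness, which is a formal consequence of the compactness of $\beuF$ and the existence of the continuous surjection $\bphi_\moM$. I expect the main obstacle to be the boundary--boundary separation: one must upgrade separation inside the subspace $\partial\moM$ to genuinely disjoint open subsets of the ambient $\bmoM$, and this hinges precisely on the observation that in an infinite finitely $1$-generated monoid no nonempty cone is contained in the interior, so that two cones disjoint on the boundary are disjoint outright.
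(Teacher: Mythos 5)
Your proof takes essentially the same route as the paper's: quasi-compactness of $(\bmoM,\caT_c(\bmoM))$ as the continuous image of the compact space $(\beuF,\caT_c(\beuF))$ under the map $\bphi_{\moM}$ induced by $\phi_\moM$ (Proposition~\ref{prop:homs} together with \S\ref{ss:comp}), combined with the Hausdorff property coming from $\caT$-regularity. The paper simply asserts both the surjectivity of $\bphi_\moM$ and the passage from Hausdorffness of $(\partial\moM,\caT_c(\bmoM))$ to Hausdorffness of all of $\bmoM$, whereas you verify these in detail (the lifting of decreasing sequences, and the three-case separation argument using that every nonempty cone $\cC_z(\bmoM)$, $z\in\moM$, meets $\partial\moM$); your argument is correct.
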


\subsection{The canonical probability measure on the boundary of a regular tree}
\label{ss:cammu}
By Carath\'eodory's extension theorem the assignment
\begin{equation}
\label{eq:mudTn}
\mu(I_v)=n^{-|v|}
\end{equation}
defines a unique probability measure $\mu\colon\Bor(\der T_n)\to\R^+_0$. 
Hence the corresponding probability measure $\mu\colon\Bor(\partial{\euF}_n)\to\R^+_0$
satisfies
\begin{equation}
\label{eq:mupaM}
\mu(\partial\euF_n\cap C_\tau(\bar{\euF}_n))= n^{-|\tau|}\ \text{for $\tau\in\euF_n$}.
\end{equation}

\begin{defi}
Let $\argu\cdot\argu\colon \euF_n\times\partial \euF_n\to\partial \euF_n$ be the map given by
\begin{equation}
x\cdot[\omega]=[x\omega],
\end{equation}
where $x\omega\colon\N\to \euF_n$ is given by $(x\omega)(n)=x\omega(n)$
\end{defi}
Note that this action is well defined, since $\omega\sim\omega^\prime$ implies that
$x\omega\sim x\omega^\prime$.

\begin{defi}
Let $\argu\cdot\argu\colon L^2(\partial \euF_n,\C,\mu)\times \euF_n\to L^2(\partial\euF_n,\C,\mu)$ 
be the map given by
\begin{equation}
\label{eq:highx}
f.x={}^xf,
\end{equation}
where
\begin{equation}
({}^xf)([\omega])=f([x\omega]).
\end{equation}
\end{defi}
Note that for $f\in L^2(\partial\euF_n,\C,\mu)$ one has ${}^xf\in L^2(\partial\euF_n,\C,\mu)$,
since
\begin{align}
\|\ {}^xf \|_2^2
&=\int_{\der T_n} |\ {}^xf|^2\, d\mu\\
&=\int_{x\der T_n} |f|^2\, d\mu\\
\label{eqmeas}
&\le\int_{\der T_n} |f|^2\, d\mu\\
&=\| f\|_2^2,
\end{align}
where \eqref{eqmeas} follows 
since $x \partial\euF_n\subseteq \partial \euF_n$.

\begin{defi}
For $z\in \euF_n$ we define the map 
$T_z\colon L^2(\partial\euF_n,\C,\mu)\to L^2(\partial\euF_n,\C,\mu)$ by
\begin{equation}
\label{eq:defTx}
T_z(f)= {}^zf.
\end{equation}
\end{defi}

\begin{fact}
\label{fact:fact1}
$\euF_n$ acts via $T_\cdot$ on $L^2(\partial\euF_n,\C,\mu)$ by bounded linear operators.
\end{fact}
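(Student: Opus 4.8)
The plan is to verify three things: that each $T_z$ is linear, that each $T_z$ is a bounded operator on $L^2(\partial\euF_n,\C,\mu)$, and that $z\mapsto T_z$ respects the monoid structure, so that $\euF_n$ genuinely acts. Linearity is immediate from the definition: since $(T_z f)([\omega])=f([z\omega])=({}^z f)([\omega])$ exhibits $T_z$ as precomposition of $f$ with the fixed map $L_z\colon\partial\euF_n\to\partial\euF_n$, $[\omega]\mapsto[z\omega]$, one has $T_z(\lambda f+g)=\lambda\,T_z f+T_z g$ pointwise. So the genuine content of the statement is boundedness together with the action law.

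For boundedness I would first record that $L_z$ is a measurable bijection of $\partial\euF_n$ onto the interval $I_z=\partial\euF_n\cap C_z(\beuF_n)$, sending $I_\tau$ to $I_{z\tau}$. Combined with the defining relation $\mu(I_v)=n^{-|v|}$ from \eqref{eq:mudTn}, this gives $\mu(I_{z\tau})=n^{-|z|}\mu(I_\tau)$, so by Carath\'eodory uniqueness the pushforward measure is $(L_z)_*\mu=n^{|z|}\cdot\mu\vert_{I_z}$. The change-of-variables formula then yields
\begin{equation}
\|T_z f\|_2^2=\int_{\partial\euF_n}|f\circ L_z|^2\,d\mu=\int_{I_z}|f|^2\,d\big((L_z)_*\mu\big)=n^{|z|}\int_{I_z}|f|^2\,d\mu\le n^{|z|}\,\|f\|_2^2,
\end{equation}
whence ${}^z f\in L^2(\partial\euF_n,\C,\mu)$ and $T_z\in\ccB(L^2(\partial\euF_n,\C,\mu))$ with $\|T_z\|\le n^{|z|/2}$. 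The main obstacle is precisely this measure transformation: the constant is $n^{|z|}$, not $1$, because $L_z$ compresses $\partial\euF_n$ onto the small cone $I_z$, so the operators are bounded but in general expanding rather than contracting. One must therefore carry the Jacobian factor $n^{|z|}$ through the change of variables; this is exactly the step used at \eqref{eqmeas}, where only the inclusion $x\partial\euF_n\subseteq\partial\euF_n$ is invoked, and where the scaling factor should be made explicit.

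Finally I would establish the action law by the pointwise computation
\begin{equation}
(T_w T_z f)([\omega])=(T_z f)([w\omega])=f([zw\omega])=(T_{zw}f)([\omega]),
\end{equation}
valid for all $z,w\in\euF_n$ and all $\omega$, together with the evident identity $T_1=\iid$. Thus $z\mapsto T_z$ satisfies $T_1=\iid$ and $T_w T_z=T_{zw}$, i.e.\ it is an anti-homomorphism $\euF_n\to\ccB(L^2(\partial\euF_n,\C,\mu))$; equivalently the assignment $f.z={}^z f$ is a right action of $\euF_n$ by bounded linear operators, which is the content of the Fact.
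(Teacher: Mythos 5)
Your proof is correct, and it in fact repairs a genuine error in the paper's own proof. The paper establishes linearity exactly as you do, but for boundedness it asserts the chain $\|{}^zf\|_2^2=\int_{z\partial\euF_n}|f|^2\,d\mu\le\int_{\partial\euF_n}|f|^2\,d\mu=\|f\|_2^2$ and concludes $\|T_z\|\le 1$. The first equality there is false: the substitution $[\omega]\mapsto[z\omega]$ is not measure-preserving, and the omitted Jacobian is exactly the factor $n^{|z|}$ that you compute from $\mu(I_{z\tau})=n^{-|z|}\mu(I_\tau)$. (Your one slip is of attribution only: the inequality actually labelled \eqref{eqmeas} is harmless as stated; the error sits in the unnumbered equality immediately preceding it.) Your bound $\|T_z\|\le n^{|z|/2}$ is moreover sharp: taking $f$ to be the characteristic function of $I_z=\partial\euF_n\cap C_z(\beuF_n)$ gives $\|f\|_2=n^{-|z|/2}$, while $T_zf$ is the constant function $1$, so $\|T_zf\|_2=1$ and hence $\|T_z\|=n^{|z|/2}>1$ whenever $|z|\geq 1$ and $n\geq 2$. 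So the operators are bounded --- the Fact itself survives --- but they are expansions rather than contractions, contrary to the estimate $\|T_z\|_\infty\le 1$ claimed in the paper's proof.

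Two further points of comparison. First, your verification of $T_1=\iid$ and $T_wT_z=T_{zw}$ (an anti-homomorphism, i.e.\ a right action, matching the paper's definition $f.x={}^xf$) is a genuine addition: the paper checks only linearity and the norm estimate and never verifies compatibility of $T_\cdot$ with the multiplication of $\euF_n$, although that is what ``acts'' requires. Second, the lost normalization is not a local blemish: with the correct measure transformation, the adjoint is $n^{|z|}$ times the formula \eqref{eq:star} of Fact~\ref{fact:adj}, and the identities \eqref{eqdelta} and \eqref{eqsum} of Proposition~\ref{prop:cuntz} hold only for suitably rescaled operators. So carrying the Jacobian explicitly, as you insist on doing, is the right way to set up this whole section, not pedantry.
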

\begin{proof}
Let $z\in \euF_n$. For $f,g\in L^2(\partial\euF_n,\C,\mu)$, $[\omega]\in\partial\euF_n$, one has
\begin{align}
\big(T_z(f+g)\big)([\omega])
&=(f+g)([z\omega])\\
&=f([z\omega])+g([z\omega])\notag\\
&=( {}^zf)([\omega])+( {}^zg)([\omega])\notag\\
&=\big(T_z (f)\big) ([\omega])+ \big(T_z (g)\big) ([\omega]).\notag
\end{align}
Thus $T_z$ is linear.
It is also bounded, since
\begin{equation}
\| T_z\|_\infty
= \sup_{\| f\|_2=1} \|T_z (f)\|_2
\le  \sup_{\| f\|_2=1} \|f\|_2\le1.
\end{equation}
\end{proof}

Hence $T_z\in\mathcal{B}\big(L^2(\partial\euF_n,\C,\mu)\big)$ for all $z\in \euF_n$.
As  $\mathcal{B}\big(L^2(\partial\euF_n,\C,\mu)\big)$ is a $C^\ast$-algebra,
$T_z$ has an adjoint operator $T_z^\ast$,
which is the bounded operator satisfying
\begin{equation}
\langle\, T_z f,g\,\rangle =\langle\, f,T_z^\ast g\,\rangle,
\end{equation}
for all $f,g\in L^2(\partial\euF_n,\C,\mu)$.

\begin{fact}
\label{fact:adj}
The bounded operator $T_z^\ast$, for $z\in \euF_n$, is given by 
\begin{equation}
\label{eq:star}
(T_z^\ast f) ([\omega])=
\begin{cases}
0 &\mbox{ if } [\omega]\notin z\partial\euF_n\\
f([\omega^\prime]) &\mbox{ if } [\omega]=x[\omega^\prime].
\end{cases}
\end{equation}
\end{fact}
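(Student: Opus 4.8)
The plan is to avoid computing $T_z^\ast$ from scratch and instead exploit the uniqueness of the adjoint: I would define an operator $R_z$ by the right-hand side of \eqref{eq:star} and verify directly that it satisfies the defining relation $\langle T_z f, g\rangle = \langle f, R_z g\rangle$ for all $f,g\in L^2(\partial\euF_n,\C,\mu)$. Since the adjoint of a bounded operator on a Hilbert space is unique, this will force $R_z = T_z^\ast$. The first step is to write the left-hand pairing explicitly, using $(T_z f)([\omega]) = ({}^z f)([\omega]) = f([z\omega])$, as
\begin{equation*}
\langle T_z f, g\rangle = \int_{\partial\euF_n} f([z\omega])\,\overline{g([\omega])}\, d\mu([\omega]).
\end{equation*}
The integrand sees $f$ only through its values on the cone $z\partial\euF_n = \partial\euF_n\cap C_z(\beuF_n)$, because $[z\omega]$ always lies in that cone. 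This already explains the case distinction in \eqref{eq:star}: the values of $f$ outside the cone never enter the pairing, so $R_z g$ must be supported on $z\partial\euF_n$.

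The central step is then a change of variables along the map $\lambda_z\colon\partial\euF_n\to z\partial\euF_n$, $[\omega]\mapsto[z\omega]$. I would first observe that $\lambda_z$ is a homeomorphism onto the cone, with inverse given by stripping the prefix $z$; here left cancellativity of the free monoid $\euF_n$ guarantees that $[\omega']$ is well defined by $[\eta]=z[\omega']$, which is exactly what is needed for the right-hand side of \eqref{eq:star} to make sense. Substituting $[\eta]=[z\omega]$ transforms the above integral into one ranging over $z\partial\euF_n$, with integrand $f([\eta])\,\overline{g([\omega'])}$; comparing this with $\langle f, R_z g\rangle = \int_{\partial\euF_n} f([\eta])\,\overline{(R_z g)([\eta])}\, d\mu([\eta])$ reads off $(R_z g)([\eta]) = g([\omega'])$ on the cone and $0$ off it, matching \eqref{eq:star}.

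The step I expect to be the main obstacle is the measure-theoretic bookkeeping hidden in this change of variables, namely controlling how $\mu$ transforms under $\lambda_z$: one must relate $\int_{\partial\euF_n} h([z\omega])\, d\mu([\omega])$ to $\int_{z\partial\euF_n} h([\eta])\, d\mu([\eta])$, and track precisely the constant by which $\mu$ is rescaled. This is governed by the scaling law \eqref{eq:mupaM}, applied on the cylinders $I_v$ that generate $\Bor(\partial\euF_n)$: since $\lambda_z$ sends $I_v$ to $I_{zv}$ with $|zv| = |z| + |v|$, the map rescales $\mu$ uniformly on the generating algebra, and by uniqueness in Carath\'eodory's extension theorem this rescaling propagates to all Borel sets. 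Getting this constant right is the delicate point on which the identification rests; once it is settled, the two pairings agree for all $f,g$. I would then close with the routine checks that $R_z g\in L^2(\partial\euF_n,\C,\mu)$ and that $R_z$ is bounded, so that $R_z$ is a genuine operator and hence equals $T_z^\ast$ by uniqueness of the adjoint.
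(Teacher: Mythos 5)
Your overall strategy (define $R_z$ by the right-hand side of \eqref{eq:star}, verify the pairing identity $\langle T_zf,g\rangle=\langle f,R_zg\rangle$, and invoke uniqueness of the adjoint) is sound, and it is essentially the strategy of the paper's own proof. You also correctly isolate the delicate point: how $\mu$ transforms under $\lambda_z\colon[\omega]\mapsto[z\omega]$. But your concluding assertion --- that once this rescaling constant is tracked, ``the two pairings agree for all $f,g$'' --- is precisely the step that fails. From \eqref{eq:mupaM} one gets $\mu(\lambda_z(A))=n^{-|z|}\mu(A)$ for every Borel set $A\subseteq\partial\euF_n$ (your Carath\'eodory argument on the cylinders $I_v$ proves exactly this). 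Hence the substitution $[\eta]=[z\omega]$ yields
\begin{equation*}
\langle T_zf,g\rangle
=\int_{\partial\euF_n}f([z\omega])\,\overline{g([\omega])}\,d\mu([\omega])
=n^{|z|}\int_{z\partial\euF_n}f([\eta])\,\overline{g([\omega'])}\,d\mu([\eta])
=n^{|z|}\,\langle f,R_zg\rangle,
\end{equation*}
and the Jacobian factor $n^{|z|}$ does not cancel, because it enters on only one side of the pairing. Concretely, for $f=g\equiv 1$ one has $\langle T_z1,1\rangle=1$ while $\langle 1,R_z1\rangle=\mu(z\partial\euF_n)=n^{-|z|}$. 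So $R_z$ is not the adjoint of $T_z$; rather $T_z^\ast=n^{|z|}R_z$, i.e., the correct formula is $(T_z^\ast f)([\omega])=n^{|z|}f([\omega'])$ for $[\omega]=z[\omega']$ and $0$ otherwise.

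The gap is therefore not in your framework but in the claim you set out to verify: carried out to the end, your change of variables refutes \eqref{eq:star} rather than proves it. The paper's proof conceals this by equating $\int_{z\partial\euF_n}f\,\overline{T_z^\ast g}\,d\mu$ with $\int_{z\partial\euF_n}(T_zf)\,\overline{g}\,d\mu$ on the strength of the pointwise identity $f([z\omega'])\,\overline{T_z^\ast g}([z\omega'])=(T_zf)([\omega'])\,\overline{g}([\omega'])$; the two sides there are functions of different variables, and integrating both against the same $\mu$ silently discards the factor $n^{|z|}$ --- exactly the bookkeeping you flagged as the main obstacle. (The same slip occurs in Fact~\ref{fact:fact1}: $T_z$ is in fact not a contraction, since $T_zI_{z\partial\euF_n}\equiv 1$ gives $\|T_zI_{z\partial\euF_n}\|_2=n^{|z|/2}\|I_{z\partial\euF_n}\|_2$.) To obtain a true statement one must either insert the factor $n^{|z|}$ into \eqref{eq:star}, or renormalize to the measure-preserving isometries $n^{-|z|/2}T_z^\ast$; with either correction, your uniqueness-of-adjoint argument goes through verbatim.
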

\begin{proof}
Note that $T_z^\ast f\in L^2(\partial\euF_n,\C,\mu)$, since
\begin{align}
	\| T_z^\ast f\|_2^2
	&=\int_{\partial\euF_n} |T_z^\ast f|^2\, d\mu\\
	&= \int_{x\partial\euF_n} |T_z^\ast f|^2\, d\mu\\
	&\le \int_{\der T_n} |f|^2\, d\mu.
\end{align}
Let $f,g\in L^2(\partial\euF_n,\C,\mu)$. Then one has
\begin{align}
	\langle\,f, T_z^\ast g \,\rangle
	&=\int_{\der T_n} f(\overline{T_z^\ast g}) \,d\mu\\
	&=\int_{z\der T_n} f(\overline{T_z^\ast g}) \,d\mu\\
	\label{adj}
	&=\int_{z\der T_n} (T_z f)\overline{g} \,d\mu\\
	&\le\int_{\der T_n} (T_z f)\overline{g} \,d\mu\\
	&=\langle\,T_z f, g \,\rangle.
\end{align}
where equality \eqref{adj} holds by 
\begin{equation}
f([z\omega^\prime])\,\overline{T_z^\ast g}([z\omega^\prime])
=(T_z f)([\omega^\prime])\,\bar{g}([\omega^\prime]).
\end{equation}
\end{proof}

\begin{prop}
\label{prop:cuntz}
The following identities hold for all $x,y\in S\subseteq\euF_n$
\begin{align}
	\label{eqdelta}
	T_x^\ast T_y&=\delta_{xy};\\
	\label{eqsum}
	\sum_{i=1}^n T_{x_i} T_{x_i}^\ast &=1.
\end{align}
In particular, the $C^\ast$-algebra 
$C^\ast(\euF_n,\mu)\subseteq\caB(L^2(\partial\euF_n,\C,\mu))$ 
generated by $\euF_n$ is isomorphic to the Cuntz algebra $\mathcal{O}_n$.
\end{prop}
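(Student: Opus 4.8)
The plan is to verify the two operator identities \eqref{eqdelta} and \eqref{eqsum} by direct computation on elements of $L^2(\partial\euF_n,\C,\mu)$, and then invoke the universal property of the Cuntz algebra $\caO_n$. Recall that $\caO_n$ is the universal $C^\ast$-algebra generated by $n$ elements $s_1,\dots,s_n$ subject precisely to the relations $s_i^\ast s_j=\delta_{ij}\cdot 1$ and $\sum_{i=1}^n s_i s_i^\ast=1$. So once the two identities are established, the assignment $s_i\mapsto T_{x_i}$ extends to a surjective $\ast$-homomorphism $\caO_n\twoheadrightarrow C^\ast(\euF_n,\mu)$; since $\caO_n$ is simple, this homomorphism is automatically injective, giving the desired isomorphism. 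The main conceptual point, therefore, reduces to checking the relations; the identification with $\caO_n$ is then a formal consequence of simplicity.

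\emph{First} I would prove \eqref{eqsum}. Using the description of $T_{x_i}^\ast$ from Fact~\ref{fact:adj}, for $f\in L^2(\partial\euF_n,\C,\mu)$ and $[\omega]\in\partial\euF_n$, the operator $T_{x_i}T_{x_i}^\ast$ acts as multiplication by the indicator of the interval $x_i\partial\euF_n$: indeed $(T_{x_i}^\ast f)([x_i\omega'])=f([\omega'])$ and vanishes off $x_i\partial\euF_n$, so applying $T_{x_i}$ returns $f([\omega])$ when $[\omega]\in x_i\partial\euF_n$ and $0$ otherwise. Since every infinite reduced word begins with exactly one generator $x_i\in S$, the boundary decomposes as the disjoint union $\partial\euF_n=\bigsqcup_{i=1}^n x_i\partial\euF_n$ (equivalently, $\der T_n=\bigsqcup_i I_{x_i}$), so the indicators sum to the constant function $1$. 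This yields $\sum_{i=1}^n T_{x_i}T_{x_i}^\ast=1$.

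\emph{Next} I would prove \eqref{eqdelta}. For $x,y\in S$ and $f\in L^2$, compute $(T_x^\ast T_y f)([\omega])$. We have $(T_y f)([\omega])=f([y\omega])$, and then applying $T_x^\ast$ via Fact~\ref{fact:adj}: if $x\neq y$ then $[x\omega]\notin y\partial\euF_n$ for every $[\omega]$ (distinct generators give disjoint intervals), so $T_x^\ast T_y f=0$; if $x=y$, then $[\omega]=x[\omega']$ forces $\omega'=\omega$ after cancelling the common first letter (the tree has no backtracking, and left multiplication by the single letter $x$ is injective on boundary words), giving $(T_x^\ast T_x f)([\omega'])=(T_x f)([x\omega'])=f([x\omega'])=\dots$; more cleanly, $(T_x^\ast T_x f)([\omega])=(T_x f)([x\omega])=f([\omega])$, so $T_x^\ast T_x=1$. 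Together these give $T_x^\ast T_y=\delta_{xy}$.

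\textbf{The step I expect to be the main obstacle} is the clean bookkeeping in \eqref{eqdelta}, specifically handling the measure-theoretic nullsets and confirming that the cancellation of the leading letter is genuinely well-defined on equivalence classes $[\omega]\in\partial\euF_n$ rather than on representatives. The definitions in \S\ref{ss:cammu} phrase the action through $x\omega$ and the shift relation $\sim$, so I must check that $[\omega]\mapsto[x\omega]$ is a measure-theoretic isomorphism of $\partial\euF_n$ onto the interval $x\partial\euF_n=I_x$ scaling $\mu$ by $n^{-1}$, and that its inverse (strip the first letter) is single-valued $\mu$-almost everywhere. Once this bijectivity and the disjointness $x_i\partial\euF_n\cap x_j\partial\euF_n=\emptyset$ for $i\neq j$ are recorded, both identities fall out and the invocation of simplicity of $\caO_n$ completes the proof.
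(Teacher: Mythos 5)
Your proposal is correct to the same extent as the paper's own proof and follows the same core route: pointwise verification of \eqref{eqdelta} and \eqref{eqsum} via Fact~\ref{fact:adj} together with the partition $\partial\euF_n=\bigsqcup_{i=1}^n x_i\partial\euF_n$, exactly as in the paper. Where you genuinely go beyond the paper is the final clause: the paper's proof stops once the two identities are checked and simply asserts the isomorphism with $\caO_n$, whereas you derive it from the universal property of $\caO_n$ plus its simplicity (the induced unital $\ast$-homomorphism $\caO_n\to C^\ast(\euF_n,\mu)$ has trivial kernel, hence is an isomorphism onto the generated algebra); this is the right argument and fills a real omission. One caveat, which you inherit from the paper rather than introduce: both your computation and the paper's silently treat $T_x$ as the isometry whose range consists of functions supported on $x\partial\euF_n$, but under the literal definitions \eqref{eq:highx}, \eqref{eq:defTx} it is the composition operator $(T_xf)([\omega])=f([x\omega])$, and it is $T_x^\ast$ (Fact~\ref{fact:adj}) that has range in $x\partial\euF_n$. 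What the computations actually establish is $T_xT_y^\ast=\delta_{xy}$ and $\sum_{i=1}^n T_{x_i}^\ast T_{x_i}=1$, i.e., the displayed identities with the factors interchanged; for instance your step $(T_x^\ast T_xf)([\omega])=(T_xf)([x\omega])$ applies the defining formula of $T_x$ to $T_x^\ast$. Relatedly, the scaling $\mu(xA)=n^{-1}\mu(A)$ that you flag as the ``main obstacle'' means the formula in Fact~\ref{fact:adj} differs from the true Hilbert-space adjoint by the factor $n^{|z|}$, so the honest Cuntz isometries are the operators of Fact~\ref{fact:adj} rescaled by $\sqrt{n}$. Neither point affects the conclusion $C^\ast(\euF_n,\mu)\cong\caO_n$, and since the paper's proof composes the operators in the same reversed order, none of this is a gap of yours relative to the paper.
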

\begin{proof}
Let $x,y\in S\subseteq \euF_n$ and let $f\in L^2(\partial\euF_n,\C,\mu)$.
For any $[\omega]\in\partial\euF_n$ one has
\begin{equation}
T_x^\ast T_y(f)([\omega])= T_x^\ast f([y\omega]) = \delta_{xy}
\end{equation}
by Fact \ref{fact:adj}. This proves identity \eqref{eqdelta}.

Let $[\omega]\in\partial\euF_n$. 
Then there exists $x_j\in S$ such that $[\omega]\in x_j\partial\euF_n$.
Hence one has
\begin{equation}
T_{x_i}T_{x_i}^\ast f ([\omega])=\delta_{ij} f([\omega])
\end{equation}
for any $f\in L^2(\partial\euF_n,\C,\mu)$.
This yields the identity \eqref{eqsum}.
\end{proof}

\subsection{Finitely $1$-generated monoids}
\label{ss:caT}
Let $\moM$ be a finitely $1$-generated $\N_0$-graded monoid.
Then one has a canonical surjective graded homomorphism $\phi_\moM\colon\euF\to\moM$,
where $\euF$ is a finitely generated free monoid (cf. \eqref{eq:canmon}),
which induces a continuous map $\partial\phi\colon\partial\euF\longrightarrow\partial\moM$
(cf. Proposition~\ref{prop:homs}).
In particular, 
\begin{equation}
\mu_\moM\colon\Bor(\partial\moM)\longrightarrow\R^+_0
\end{equation} 
given by $\mu_\moM(A)=\mu(\partial\phi_\moM)^{-1}(A))$
is a Borel probability measure on $\partial\moM$.

For $s\in\moM$, define the map $\beta_s\colon\der\moM\to\der\moM$ by 
\begin{equation}
\label{eq:act1}
	\beta_s([f])=[sf],\quad [f]\in\der\moM,
\end{equation}
where $(sf)(n)=s\cdot f(n)$ for all $n\in\N$, $f\in\cuD(\N,\moM,\preceq)$.
Then, as $\beta_s$ is mapping cones to cones, $\beta_s$ is continuous.
Hence one has a representation
\begin{equation}
\label{eq:act2}
	\beta\colon \moM\to C(\der \moM,\der \moM).
\end{equation}
For $s\in\moM$, let $\beta_{\ast,s}\colon L^2(\der\moM,\C,\mu)\to L^2(\der\moM,\C,\mu)$
be the map defined by
\begin{equation}
\label{eq:act3}
	\beta_{\ast,s}(g)([f])=g(\beta_s([f]))=g([sf]),\quad\,g\in L^2(\der\moM,\mu),\,[f]\in\der\moM.
\end{equation}
Then one has 
\begin{align}
	\|\beta_{\ast,s}(g)\|_2^2
	&=\int_{\der\moM} \big|g\big(\beta_s([f]\big)\big|^2\, d\mu_{\moM}\\
	&=\int_{\der\moM} \big|g\big([sf]\big)\big|^2 \, d\mu_{\moM}\\
	&=\int_{s\der\moM} \big|g\big([f]\big)\big|^2 \, d\mu_{\moM}\\
	&\le\int_{\der\moM}\big| g \big([f]\big) \big|^2 \,d\mu_{\moM}\\
	&=\|g\|_2^2,
\end{align}
for all $g\in L^2(\der\moM,\C,\mu)$, $s\in \moM$.
Thus,
\begin{equation}
\label{eq:act4}
	\|\beta_{\ast,s}\|=\sup_{\| g\|_2=1} \|\beta_{\ast,s}(g)\|_2\le 1
\end{equation}
for all $s\in\moM$, i.e., $\beta_{\ast,s}$ is a bounded operator on $L^2(\der\moM,\C,\mu)$.
By an argument similar to the one used in the proof of Fact~\ref{fact:fact1} one can show that
it is also linear. 
In particular, there exists a representation
\begin{equation}
\label{eq:act5}
	\beta_\ast\colon\moM\to\mathcal{B}\big(L^2(\der\moM,\C,\mu)\big).
\end{equation}


\subsection{Right-angled Artin monoids}
\label{ss:raam}
Let $\Gamma=(V,E)$ be a finite undirected graph, i.e. $|V|=n<\infty$
and $E\subseteq \mathscr{P}_2(V)$ is a set of subsets of cardinality $2$ of $V$ . 
The right-angled Artin monoid associated to $\Gamma$ is the monoid $\moM^\Gamma$
defined by
\begin{equation}
\label{eq:raam1}
	\moM^\Gamma=\langle\,x\in V \mid xy=yx\mbox{ if } \{x,y\}\in E\,\rangle^+.
\end{equation}
Clearly, $\moM^\Gamma$ is $\N_0$-graded and finitely $1$-generated.
By Luis Paris theorem (cf. \cite{paris:artin}), $\moM^\Gamma$ embeds into the right-angled Artin group $G_\Gamma$. Thus  $\moM^\Gamma$ has  the left-cancellation
property as well as the right-cancellation property.
The canonical homomorphism $\phi_\Gamma\colon \euF\langle V\rangle\longrightarrow
\moM^\Gamma$ is surjetcive and induces a continuous surjective map
\begin{equation}
\label{eq:canraam}
\partial\phi_\Gamma\colon\partial\euF\langle V\rangle\longrightarrow\partial\moM^\Gamma.
\end{equation}
(cf. Proposition~\ref{prop:homs}). 
We denote by $\mu_\Gamma\colon\Bor(\partial\moM^\Gamma)\longrightarrow\R^+_0$
the Borel probability measure induced by $\partial\phi_\Gamma$, i.e., for
$A\in\Bor(\partial\moM^\Gamma)$ one has $\mu_\Gamma(A)=
\mu(\partial\phi_\Gamma^{-1}(A))$, where $\mu$ is the measure defined
on $\partial\euF\langle V\rangle$ by \eqref{eq:mupaM}.

\begin{defi}
	Let $\Gamma=(V,E)$ be a graph with unoriented edges,
	and let $\Gamma_1=(V_1,E_1)$ and $\Gamma_2=(V_2,E_2)$ be subgraphs
	of $\Gamma$. We say that $\Gamma$ is {\it bipartitly decomposed} by $\Gamma_1$
	and $\Gamma_2$, if $V=V_1\sqcup V_2$ and
	\begin{equation}
	\label{eq:bipar1}
	E=E_1\sqcup E_2\sqcup \big\{\,\{v_1,v_2\}\mid v_1\in V_1,\,v_2\in V_2\,\big\}.
	\end{equation}
	In this case we will write $\Gamma=\Gamma_1\vee\Gamma_2$.
	If no such decomposition exists, $\Gamma$ will be called {\it coconnected}.
\end{defi}

If $\Gamma$ is any graph which is not coconnected, 
we can decompose it into coconnected components.
We find these coconnected components by considering its
{\it opposite graph} $\,\Gamma^{\op}=(V,\mathscr{P}_2(V)\setminus E)$ and finding its connected components;
the coconnected components of $\Gamma$ correspond to 
the connected components of $\Gamma^\op$.

One has the following property.

\begin{fact}
	\label{fact:graph}
	Let $\Gamma=(V,E)$ be a graph with unoriented edges. Then $\Gamma$ is
	coconnected if, and only if, $\Gamma^{\op}$ is connected. In particular, if
	$\,\Gamma^{\op}=\bigsqcup_{i\in I} \Lambda_i$ is the decomposition of $\Gamma^{\op}$
	in its connected components, then one has
	\begin{equation}
	\label{eq:bipar2}
	\Gamma=\textstyle{\bigvee_{i\in I} \Lambda_i^{\op}},
	\end{equation}
	where $\Lambda_i^{\op}$ are coconnected subgraphs of $\Gamma$.
\end{fact}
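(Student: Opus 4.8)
The plan is to prove the claimed equivalence and its consequence by translating everything into the opposite graph, where coconnectedness becomes ordinary connectedness. Let me think about what each direction requires.

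First I would unpack the definitions. The graph $\Gamma = (V,E)$ is coconnected precisely when it admits no bipartite decomposition $\Gamma = \Gamma_1 \vee \Gamma_2$, and such a decomposition means $V = V_1 \sqcup V_2$ with $E$ containing \emph{all} edges $\{v_1,v_2\}$ between the two parts. The key observation is that this ``all cross-edges present'' condition in $\Gamma$ is exactly the condition ``no cross-edges present'' in $\Gamma^{\op} = (V, \mathscr{P}_2(V) \setminus E)$. So a bipartite decomposition of $\Gamma$ corresponds precisely to a partition $V = V_1 \sqcup V_2$ that is a \emph{disconnection} of $\Gamma^{\op}$, i.e.\ with no $\Gamma^{\op}$-edges crossing between $V_1$ and $V_2$.

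For the first assertion I would argue both directions through this correspondence. If $\Gamma^{\op}$ is disconnected, its vertex set splits as $V = V_1 \sqcup V_2$ with no $\Gamma^{\op}$-edge between the parts; then every cross-pair $\{v_1,v_2\}$ lies in $E$, and restricting $\Gamma$ to $V_1$ and $V_2$ gives subgraphs $\Gamma_1, \Gamma_2$ exhibiting $\Gamma = \Gamma_1 \vee \Gamma_2$, so $\Gamma$ is not coconnected. Conversely, if $\Gamma$ is not coconnected, pick a decomposition $\Gamma = \Gamma_1 \vee \Gamma_2$; then no cross-pair is missing from $E$, hence no cross-pair lies in $\mathscr{P}_2(V) \setminus E$, so $\Gamma^{\op}$ has no edge between $V_1$ and $V_2$ and is therefore disconnected. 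Taking contrapositives on both sides yields that $\Gamma$ is coconnected if and only if $\Gamma^{\op}$ is connected.

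For the ``in particular'' clause, I would decompose $\Gamma^{\op} = \bigsqcup_{i\in I}\Lambda_i$ into its connected components and set $\Lambda_i^{\op}$ to be the subgraph of $\Gamma$ induced on $V(\Lambda_i)$. Since each $\Lambda_i$ is a connected component of $\Gamma^{\op}$, there are no $\Gamma^{\op}$-edges between distinct components, so in $\Gamma$ every cross-pair between distinct $V(\Lambda_i)$ and $V(\Lambda_j)$ is present in $E$; this is exactly the condition defining the iterated join $\Gamma = \bigvee_{i\in I}\Lambda_i^{\op}$ (one would check the edge set decomposes as the disjoint union of the internal edges of each $\Lambda_i^{\op}$ together with all cross-edges). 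Each $\Lambda_i^{\op}$ is coconnected by the first part, since its opposite $\Lambda_i$ (relative to the vertex set $V(\Lambda_i)$) is connected. The only step needing genuine care is verifying that forming the opposite is compatible with passing to induced subgraphs on a fixed vertex set, so that ``$\Lambda_i$ connected'' legitimately translates to ``$\Lambda_i^{\op}$ coconnected'' via the already-proved equivalence; once the conventions are fixed this is routine, and it is the only place where an inattentive bookkeeping of vertex sets could cause trouble.
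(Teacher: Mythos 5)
Your proof is correct and takes essentially the same route as the paper's: both rest on the single observation that a bipartite decomposition $\Gamma=\Gamma_1\vee\Gamma_2$ corresponds exactly to a disconnection $\Gamma^{\op}=\Gamma_1^{\op}\sqcup\Gamma_2^{\op}$, applied to the connected components of $\Gamma^{\op}$. The paper dispatches this with one sentence (``Obviously\dots''), whereas you spell out both directions and the compatibility of taking opposites with induced subgraphs on the component vertex sets --- a more careful write-up of the same argument, not a different one.
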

\begin{proof}
	Obviously, the graph $\Gamma=\Gamma_1\vee\Gamma_2$ is bipartitly decomposed
	if, and only if, $\Gamma^{\op}=\Gamma_1^\op\sqcup\Gamma_2^\op$ is not connected.
	This yields to the claim.
\end{proof}

In analogy to the decomposition in connected components we will call \eqref{eq:bipar2}
the {\it decomposition in coconnected components}.
Note that the decomposition in coconnected components
implies that ant two vertices in different components 
must be connected by an edge. From this property one concludes
the following straightforward fact.

\begin{fact}
	\label{fact:coco1}
	Let $\Gamma=(V,E)$ be a finite graph with unoriented edges, and let
	$\Gamma=\bigvee_{1\leq i\leq r} \Gamma_i$ be its decomposition in coconnected
	components, $\Gamma_i=(V_i,E_i)$.Then
	\begin{equation}
	\label{eq:coco1}
	\moM^\Gamma=\moM^{\Gamma_1}\times\cdots\times \moM^{\Gamma_r},
	\end{equation}
	where $\moM^{\Gamma_i}=\langle\, v\in V_i\,\rangle$. In particular,
	$\partial\moM^\Gamma=\times_{1\leq r} \partial\moM^{\Gamma_i}$ and
	\begin{equation}
	\label{eq:prodL2}
	L^2(\partial\moM,\C,\mu)=L^2(\partial\moM^{\Gamma_r},\C,
	\mu_{\Gamma_1})\hotimes\cdots\hotimes 
	L^2(\partial\moM^{\Gamma_r},\C,\mu_{\Gamma_r}).
		\end{equation}
\end{fact}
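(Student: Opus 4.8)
The plan is to prove the three assertions in turn, treating the monoid isomorphism \eqref{eq:coco1} as the genuine input and the boundary and $L^2$-decompositions as consequences of it.

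\emph{The isomorphism of monoids.} First I would compare presentations. As noted just before the statement, in the decomposition $\Gamma=\bigvee_{1\le i\le r}\Gamma_i$ into coconnected components any two vertices lying in distinct components $V_i,V_j$ ($i\neq j$) are joined by an edge of $\Gamma$; equivalently, by \eqref{eq:bipar1} one has $E=\bigsqcup_i E_i\sqcup\{\{v_i,v_j\}\mid v_i\in V_i,\,v_j\in V_j,\,i\neq j\}$. Now the direct product $\moM^{\Gamma_1}\times\cdots\times\moM^{\Gamma_r}$ is generated by $V=\bigsqcup_i V_i$ subject to exactly the relations $xy=yx$ for $\{x,y\}\in E_i$ (those internal to each factor) together with $xy=yx$ for $x\in V_i$, $y\in V_j$, $i\neq j$ (commutation of distinct factors). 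By the displayed description of $E$ this relation set is precisely $\{xy=yx\mid\{x,y\}\in E\}$, i.e.\ the defining relations of $\moM^\Gamma$ in \eqref{eq:raam1}. Hence the map sending each $v\in V_i$ to the corresponding generator of the $i$-th factor extends to an isomorphism, and it is graded because both sides carry $|(\omega_1,\dots,\omega_r)|=\sum_i|\omega_i|$. The only thing to verify is the usual normal-form/injectivity argument for a product presentation: the cross-commutations let one sort any word into a product of an $S_{V_1}$-word, …, $S_{V_r}$-word, and the internal relations reduce each coordinate separately, so two words with equal image have equal coordinates.

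\emph{The boundary decomposition.} Under \eqref{eq:coco1} one has $x=(x_1,\dots,x_r)\preceq y=(y_1,\dots,y_r)$ if and only if $x_i\preceq y_i$ for all $i$ (since $x=yz$ coordinatewise), so $(\moM^\Gamma,\preceq)$ is the product of the posets $(\moM^{\Gamma_i},\preceq)$. A decreasing sequence in a product is a tuple of decreasing sequences, whence the completion factors as $\bmoM^\Gamma\cong\prod_i\bmoM^{\Gamma_i}$ and, by \eqref{eq:cone1}–\eqref{eq:cone2}, cones in the product are products of cones, so $\caT_c$ is the product topology. The principal obstacle lies exactly here: the \emph{universal} boundary of a product is a priori strictly larger than the product of the boundaries, since a decreasing sequence may tend to a finite limit in some coordinates and to a boundary point in others. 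To obtain the clean identification $\partial\moM^\Gamma=\times_{1\le i\le r}\partial\moM^{\Gamma_i}$ of the statement, the approach I would take is to pass to the Laca boundary and transport through the homeomorphism $\bchi$ of Theorem~\ref{thm:princ}: the principal right ideals of a direct product are products of principal right ideals, and they intersect coordinatewise, so every character in $\whE(\moM^\Gamma)$ factors as a tuple of characters, giving $\whE(\moM^\Gamma)\cong\prod_i\whE(\moM^{\Gamma_i})$. Establishing this factorization together with its (fine-topology) continuity, and reconciling it with the cone-topology picture above, is the technical heart of the step.

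\emph{The measure and the Hilbert tensor product.} The canonical measure $\mu_\Gamma$ is the push-forward of the tree measure along $\partial\phi_\Gamma$ of \eqref{eq:canraam}. I would compute its value on a product cone from \eqref{eq:mupaM} and the coordinatewise behaviour of $\partial\phi_\Gamma$, obtaining the multiplicativity $\mu_\Gamma\big(\partial\moM^\Gamma\cap C_{(\tau_1,\dots,\tau_r)}\big)=\prod_i\mu_{\Gamma_i}\big(\partial\moM^{\Gamma_i}\cap C_{\tau_i}\big)$. Since such product cones generate the Borel $\sigma$-algebra and determine a probability measure uniquely (Carath\'eodory, as in \S\ref{ss:cammu}), it follows that $\mu_\Gamma$ is the product measure $\mu_{\Gamma_1}\otimes\cdots\otimes\mu_{\Gamma_r}$ under the identification $\partial\moM^\Gamma=\times_i\partial\moM^{\Gamma_i}$. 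Finally, $L^2$ of a finite product of probability spaces is the completed Hilbert-space tensor product of the factors, which is precisely \eqref{eq:prodL2} (the $i$-th slot there should read $L^2(\partial\moM^{\Gamma_i},\C,\mu_{\Gamma_i})$). I expect steps one and three to be routine, with the genuine difficulty concentrated in showing that the boundary functor sends the product of monoids to the product of boundaries with the intended topology.
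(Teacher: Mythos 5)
Your first step is correct, and it is in fact the only part of this statement that the paper could honestly call straightforward (the paper offers no proof at all, introducing the Fact with ``one concludes the following straightforward fact''): since any two vertices in distinct coconnected components are joined by an edge of $\Gamma$, the presentation \eqref{eq:raam1} of $\moM^\Gamma$ is literally the presentation of the direct product, and your normal-form argument closes the graded isomorphism \eqref{eq:coco1}.

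The gap is in your second step, and your own objection is fatal to your proposed repair. The mixed points you describe are genuinely elements of $\partial\moM^\Gamma$ and are \emph{not} removable by passing to $\whE$: by Proposition~\ref{prop:val} the map $\bchi$ is injective on classes of decreasing sequences, and a mixed sequence such as $n\mapsto(\omega(n),1)$ is not mutually dominating with any element of $\partial\moM^{\Gamma_1}\times\partial\moM^{\Gamma_2}$, because $(\omega,1)\preceq(\omega',\eta')$ would force $1\preceq\eta'(n)$, i.e.\ $\eta'(n)=1$ for all $n$. The factorization $\whE(\moM^\Gamma)\cong\prod_i\whE(\moM^{\Gamma_i})$ that you invoke is true, but it is a statement about the full character space, in which the mixed classes reappear as tuples having some coordinate of finite type $\chi_{c_\tau}$, $\tau\in\moM^{\Gamma_i}$ (such characters are not excluded by the paper's definition of $\whE$); so transporting through $\bchi$ reproduces exactly the same excess. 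Indeed the asserted equality is simply false for the universal boundary as the paper defines it: for $\Gamma$ a single edge one has $\moM^\Gamma=\N_0\times\N_0$, whose universal boundary is the countably infinite set $(\N_0\cup\{\infty\})^2\setminus\N_0^2$, whereas $\partial\N_0\times\partial\N_0$ is a single point. (This is the same phenomenon as in Remark~\ref{rem:tilno}, and it also shows that no appeal to Theorem~\ref{thm:princ} can rescue the equality, since that theorem cannot hold simultaneously for $\moM^\Gamma$ and its factors.) What is true, and what the rest of the section actually needs, is the measure-theoretic version: $\times_i\,\partial\moM^{\Gamma_i}$ sits inside $\partial\moM^\Gamma$ as a subset of full $\mu_\Gamma$-measure. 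Its complement consists of classes whose $i$-th coordinate stabilizes for some $i$; the preimage under $\partial\phi_\Gamma$ of \eqref{eq:canraam} consists of infinite words over $V$ containing only finitely many letters from some $V_i$, and for each cutoff $k$ the words with no $V_i$-letter beyond position $k$ have measure $\lim_{m\to\infty}\bigl((n-|V_i|)/n\bigr)^m=0$, so this complement is $\mu_\Gamma$-null. With that replacement your third step goes through essentially as written: the coordinate subwords of a uniform random infinite word over $V$ are independent and uniform over the $V_i$, so $\mu_\Gamma$ restricts to the product measure on the full-measure part, and $L^2$ of a product of probability spaces is the completed Hilbert tensor product, which is \eqref{eq:prodL2} (you are also right that the indices there should read $\Gamma_i$). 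So the statement you should prove, and can prove, is ``equality up to a $\mu_\Gamma$-null set''; nothing stronger is provable from the paper's definition of $\partial\moM$.
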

In \cite{CL07}, J. Crisp and M.Laca has shown the following.

\begin{thm}[\cite{CL07}, Theorem 6.7]
	\label{th:CL07}
	Let $\Gamma=(V,E)$ be a finite unoriented graph such that $\Gamma^\op$ has no isolated vertices, and let $\Gamma=\bigvee_{i=1}^r \Gamma_i$ be the decomposition of $\Gamma$ in coconnected components, $\Gamma_i=(V_i,E_i)$. 
	Then the universal $C^\ast$-algebra 
	with generators $\{S_x\mid x\in V\}$ subject to the relations
	\begin{itemize}
		\item[(i)] $S_x^\ast S_x=1$ for each $x\in V$;
		\item[(ii)] $S_x S_y=S_y S_x$ and $S_x^\ast S_y=S_y S_x^\ast$ if $x$ and $y$ are adjacent in $\Gamma$;
		\item[(iii)] $S_x^\ast S_y=0$ if $x$ and $y$ are distinct and not adjacent in $\Gamma$;
		\item[(iv)] $\prod_{x\in V_i}(1-S_x S_x^\ast)=0$ for each $i\in\{1,\dots,r\}$;
	\end{itemize}
	is canonically isomorphic to the boundary quotient $\partial C_\lambda(\moM^\Gamma)$ for 
	$\moM^\Gamma$ and it is a simple $C^\ast$-algebra.
\end{thm}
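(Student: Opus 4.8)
The plan is to exhibit both algebras as the C*-algebra of one \'etale groupoid and then apply the standard simplicity criterion; as this is a result quoted from \cite{CL07}, I only indicate how the argument runs in the present language.

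First I would construct a surjection $\pi\colon\mathcal{A}_\Gamma\to\partial C_\lambda(\moM^\Gamma)$ from the universal algebra $\mathcal{A}_\Gamma$ presented by (i)--(iv). The boundary quotient is generated by the images $s_x$ of the isometries $\lambda(x)$ of the left regular representation of $\moM^\Gamma$ on $\ell^2(\moM^\Gamma)$, so it suffices to verify (i)--(iv) for the $s_x$. Relation (i) holds because $\moM^\Gamma$ is left-cancellative (Paris' embedding into $G_\Gamma$), so each $\lambda(x)$ is an isometry. For (ii) one computes directly in $\ell^2(\moM^\Gamma)$, using that adjacent generators commute and that $x\moM^\Gamma\cap y\moM^\Gamma=xy\,\moM^\Gamma$ for adjacent $x,y$. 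Relation (iii) records the disjointness $x\moM^\Gamma\cap y\moM^\Gamma=\varnothing$ for distinct non-adjacent generators, which follows since no element of the trace monoid can begin with two distinct non-commuting letters. Finally, (iv) is the defining boundary (foundation) relation: the vertex set $V_i$ of a coconnected component is a foundation set, so the ranges of the $s_x$, $x\in V_i$, cover the boundary and $\prod_{x\in V_i}(1-s_xs_x^\ast)$ is sent to $0$ in the quotient. Universality then gives the surjection $\pi$.

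Next I would identify $\mathcal{A}_\Gamma$ with a groupoid C*-algebra. The commutative subalgebra $\mathcal{D}\subseteq\mathcal{A}_\Gamma$ generated by the range projections $s_\mu s_\mu^\ast$, $\mu\in\moM^\Gamma$, has spectrum the Laca boundary $\whE(\moM^\Gamma)$, which by Theorem~A is the universal boundary $\partial\moM^\Gamma$ with the fine topology; on it $\moM^\Gamma$ acts by the left-multiplication shifts \eqref{eq:act1}. Relations (i)--(iv) are precisely the defining relations of the C*-algebra of $\mathcal{G}$, the groupoid of germs of the $\moM^\Gamma$-action on $\whE(\moM^\Gamma)$, so that $\mathcal{A}_\Gamma\cong C^\ast(\mathcal{G})$ with $\mathcal{G}$ \'etale and with compact totally disconnected unit space.

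Simplicity then reduces to two dynamical properties of $\mathcal{G}$. Minimality: the $\moM^\Gamma$-action on $\whE(\moM^\Gamma)$ has no proper closed invariant subset, since from any character one may prepend admissible words and, via the foundation relation (iv), delete initial segments so as to reach any basic clopen cone; hence every orbit is dense. Effectiveness: the characters with trivial isotropy are dense, which via the embedding into $G_\Gamma$ amounts to topological freeness of the $G_\Gamma$-action on its boundary, i.e. to the absence of boundary points fixed by a non-trivial group element. Granting both, and using that the boundary action is amenable so that full and reduced completions coincide, the simplicity criterion for effective minimal \'etale groupoids yields that $C^\ast(\mathcal{G})$, and hence $\mathcal{A}_\Gamma$, is simple. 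Since $\pi$ is a non-zero homomorphism out of a simple algebra ($\whE(\moM^\Gamma)\neq\varnothing$ forces $\partial C_\lambda(\moM^\Gamma)\neq 0$), it is automatically injective, hence an isomorphism, and the boundary quotient inherits simplicity.

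The hard part will be establishing effectiveness of $\mathcal{G}$, and this is exactly where the hypothesis that $\Gamma^{\op}$ has no isolated vertices enters. An isolated vertex $z$ of $\Gamma^{\op}$ is a generator commuting with every other generator; it forms a singleton coconnected component, for which (iv) reads $s_zs_z^\ast=1$, so together with (i) the element $s_z$ becomes a central unitary and produces a quotient onto $C(\mathbb{T})$, destroying both topological freeness and simplicity. Excluding periodic boundary points in the partially commutative setting---where normal forms are defined only up to shuffles of commuting letters---is the genuine combinatorial difficulty and constitutes the technical core of \cite{CL07}.
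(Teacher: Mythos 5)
The first thing to note is that the paper contains no proof of this statement at all: Theorem~\ref{th:CL07} is imported verbatim from Crisp and Laca, and the paper's entire justification is the citation \cite{CL07}. So there is no in-paper argument to compare yours against; the only meaningful comparison is with the actual proof in \cite{CL07}, and on that score your outline reconstructs its genuine strategy quite faithfully: realize the boundary quotient as the C$^\ast$-algebra of the groupoid (equivalently, a partial crossed product by $G_\Gamma$) attached to the action on the boundary of the Nica spectrum --- which, in the language of this paper, is the Laca boundary $\whE(\moM^\Gamma)$, identified with $\partial\moM^\Gamma$ in its fine topology by Theorem~A --- then deduce simplicity from minimality, topological freeness and amenability (the latter making full and reduced completions agree), and finally use simplicity to promote the canonical surjection to an isomorphism. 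Your verifications of (i)--(iii) on $\ell^2(\moM^\Gamma)$ are correct (left cancellativity via Paris' embedding; $x\moM^\Gamma\cap y\moM^\Gamma=xy\,\moM^\Gamma$ for adjacent generators and $=\varnothing$ for distinct non-adjacent ones); $V_i$ is indeed a foundation set, because distinct coconnected components are completely joined in $\Gamma$; and your diagnosis of the hypothesis on $\Gamma^{\op}$ is exactly right: an isolated vertex $z$ of $\Gamma^{\op}$ is a singleton coconnected component, relation (iv) then makes $S_z$ a unitary, central by (ii), and the resulting quotient onto $C(\mathbb{T})$ kills both effectiveness and simplicity.

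Two caveats. First, your sketch defers precisely the three pillars --- minimality, effectiveness, and amenability of $(G_\Gamma,\moM^\Gamma)$ --- and, in addition, the identification of the universal algebra $\mathcal{A}_\Gamma$ with the groupoid algebra is not automatic: universality only hands you a surjection $\mathcal{A}_\Gamma\to C^\ast(\mathcal{G})$, and one must separately show that relations (i)--(iv) already force the diagonal of $\mathcal{A}_\Gamma$ to have spectrum exactly $\whE(\moM^\Gamma)$ and no more; this is where (ii)--(iii), encoding the semilattice of principal right ideals, do real work. Since the paper itself treats the theorem as a black box, deferring these points to \cite{CL07} is legitimate, but be aware that they \emph{are} the substance of the theorem, not routine checks. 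Second, a genuine slip: effectiveness is \emph{not} ``the absence of boundary points fixed by a non-trivial group element.'' The boundary action of $G_\Gamma$ always has fixed points --- already $x^\infty$ is fixed by $x$ for any generator $x$ --- so if freeness were the requirement, the theorem would be false. What holds, and what the simplicity criterion needs, is the statement you made first and should have kept: the characters with trivial isotropy are dense, i.e.\ the fixed-point set of each $g\neq 1$ has empty interior, and it is this topological freeness whose verification (under the no-isolated-vertices hypothesis on $\Gamma^{\op}$) forms the combinatorial core of \cite{CL07}.
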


Hence, one has the following proposition.
\begin{prop}
	The $C^\ast$-algebra $C^\ast(\moM^\Gamma,\mu_\Gamma)$ (cf. \eqref{eq:Cstarmon})
	 of a right-angled Artin monoid $\moM^\Gamma$
	is isomorphic to the boundary quotient $\partial C_\lambda(\moM^\Gamma)$ of Theorem \ref{th:CL07}.
\end{prop}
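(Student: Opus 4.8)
The plan is to present $\partial C_\lambda(\moM^\Gamma)$ through its Crisp--Laca generators and relations and to extract the isomorphism from simplicity. By Theorem~\ref{th:CL07}, $\partial C_\lambda(\moM^\Gamma)$ is the universal $C^\ast$-algebra on generators $\{S_x\mid x\in V\}$ subject to (i)--(iv), and it is simple. I would exhibit, inside $C^\ast(\moM^\Gamma,\mu_\Gamma)$, the isometries $\beta_x$ attached to the degree-one generators $x\in V=\moM^\Gamma_1$ (the analogues of the Cuntz isometries of Proposition~\ref{prop:cuntz}, normalized so that $\beta_x^\ast\beta_x=1$), and check that they satisfy exactly (i)--(iv). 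The universal property then yields a $\ast$-homomorphism $\Phi\colon\partial C_\lambda(\moM^\Gamma)\to C^\ast(\moM^\Gamma,\mu_\Gamma)$ with $\Phi(S_x)=\beta_x$. Since every $\beta_\omega$ is a product of the $\beta_x$, the image of $\Phi$ contains all generators of $C^\ast(\moM^\Gamma,\mu_\Gamma)$, so $\Phi$ is surjective; and because $\Phi(S_x)=\beta_x\neq 0$ while the domain is simple, $\Phi$ has trivial kernel and is therefore an isomorphism. Thus the whole argument reduces to verifying the four relations.

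Relations (i)--(iii) are local and follow as in the free case. Relation (i), the isometry identity $\beta_x^\ast\beta_x=1$, rests on the left-cancellation property of $\moM^\Gamma$ (Paris's theorem), which makes $[f]\mapsto[xf]$ injective with image the cone $C_x(\bmoM^\Gamma)\cap\partial\moM^\Gamma$, together with the scaling of $\mu_\Gamma$ on that cone, exactly as in Fact~\ref{fact:adj} and Proposition~\ref{prop:cuntz}. For (ii), adjacency of $x$ and $y$ means $xy=yx$ in $\moM^\Gamma$, whence $\beta_x\beta_y=\beta_y\beta_x$, and the twisted relation $\beta_x^\ast\beta_y=\beta_y\beta_x^\ast$ follows formally from this together with (i). For (iii), if $x\neq y$ are non-adjacent they do not commute, and a normal-form (heaps of pieces) argument gives $x\moM^\Gamma\cap y\moM^\Gamma=\emptyset$; by \eqref{eq:cone1} the cones $C_x(\bmoM^\Gamma)$ and $C_y(\bmoM^\Gamma)$ are then disjoint, so the ranges of $\beta_x$ and $\beta_y$ are orthogonal and $\beta_x^\ast\beta_y=0$.

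The crux is relation (iv), $\prod_{x\in V_i}(1-\beta_x\beta_x^\ast)=0$ for each coconnected component $\Gamma_i=(V_i,E_i)$. Since $\beta_x\beta_x^\ast$ is the orthogonal projection onto $L^2(C_x(\bmoM^\Gamma)\cap\partial\moM^\Gamma)$, the left-hand side is the projection onto the functions supported off $\bigcup_{x\in V_i}C_x(\bmoM^\Gamma)$, and the assertion is that this set is $\mu_\Gamma$-null. Here I would use the product decomposition of Fact~\ref{fact:coco1}: under $\partial\moM^\Gamma=\times_i\partial\moM^{\Gamma_i}$ and $L^2(\partial\moM^\Gamma,\C,\mu_\Gamma)=\hotimes_i L^2(\partial\moM^{\Gamma_i},\C,\mu_{\Gamma_i})$, each $\beta_x$ with $x\in V_i$ acts only on the $i$-th tensor factor, so (iv) collapses to the single-factor statement $\prod_{x\in V_i}(1-\beta_x\beta_x^\ast)=0$ on $L^2(\partial\moM^{\Gamma_i},\C,\mu_{\Gamma_i})$. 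Within the coconnected factor it suffices to show that $\mu_{\Gamma_i}$-almost every boundary point lies in some cone $C_x$, $x\in V_i$: a generic point is represented by a strictly decreasing sequence of unbounded degree (Fact~\ref{fact:strict}), each term of which has a degree-one left divisor in $V_i$, and such a divisor persists along the sequence (a left divisor of $\omega(k)$ is a left divisor of every $\omega(k')$, $k'\geq k$), thereby placing the point in the corresponding cone.

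The main obstacle is precisely this last reduction: one must ensure that $\mu_\Gamma$ is concentrated on the \emph{full} product $\times_i\partial\moM^{\Gamma_i}$, i.e.\ that almost every boundary point projects to a genuine boundary point of each coconnected factor (equivalently, uses infinitely many letters from every $V_i$). This is needed because, set-theoretically, $\partial\moM^\Gamma$ also contains degenerate points that stay bounded in some factor and which no cone $C_x$ with $x\in V_i$ can capture, and on such points (iv) would fail. The resolution exploits the definition of $\mu_\Gamma$ as the push-forward along $\partial\phi_\Gamma$ of the tree measure $\mu$ of \eqref{eq:mupaM}: a $\mu$-random infinite word uses each generator infinitely often, so the degenerate locus has $\mu_\Gamma$-measure zero and (iv) holds. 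Once (iv) is secured, the simplicity argument of the first paragraph finishes the proof.
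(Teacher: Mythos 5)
Your proposal is correct and follows essentially the same route as the paper's own proof: verify the Crisp--Laca relations (i)--(iv) for the concrete operators on $L^2(\partial\moM^\Gamma,\C,\mu_\Gamma)$, with (iv) reduced, via the coconnected decomposition and the tensor factorization \eqref{eq:prodL2} of Fact~\ref{fact:coco1}, to the statement that ($\mu_\Gamma$-almost) every boundary point of a coconnected factor lies in some cone $C_x$, $x\in V_i$, and then conclude from the universal property together with the simplicity asserted in Theorem~\ref{th:CL07}. If anything you are more careful than the paper, since you justify that the degenerate boundary points outside the full product $\times_i\partial\moM^{\Gamma_i}$ form a $\mu_\Gamma$-null set (the paper invokes Fact~\ref{fact:coco1} as if the product decomposition were a set-theoretic identity, without proof); the one loose end on your side is the claim that $\beta_x^\ast\beta_y=\beta_y\beta_x^\ast$ follows \emph{formally} from commutation and (i) --- it does not (e.g.\ the commuting isometries $S$ and $S^2$, with $S$ the unilateral shift, violate it), but for the concrete operators it holds by the same direct computation used for the other relations.
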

\begin{proof}
	Let $\Gamma=(V,E)$ be a finite unoriented graph such that $|V|=n$
	and let $\Gamma=\bigvee_{i=1}^r \Gamma_i$ be its decomposition in coconnected components.
	It is straightforward to verify that for the set of operators $\{T_x\mid x\in V\}$, where the 
	operator $T_x$ is defined as in \eqref{eq:defTx}, the adjoint operators are given by \eqref{eq:star},
	and thus $\{T_x\mid x\in V\}$ satisfies the relations (i)-(iii). 
	It remains to prove that it also satisfies (iv). Let 
	\begin{equation}
	\label{eq:defidem}
	\be_i=\prod_{x\in V_i}(1-T_x T_x^\ast).
	\end{equation}
	In order to show that $\be_i(f)=0$ for all $f\in L^2(\partial\moM,\C,
	\mu_{\Gamma})$ it suffices to show that 
	 $\be_i(f)=0$ for $f=f_1\otimes\cdots\otimes f_r$, $f_i\in 
	 L^2(\partial\moM_{\Gamma_i},\C,\mu_{\Gamma_i})$ (cf. \eqref{eq:prodL2}).
	Note that 
	\begin{equation}
	\label{eq:valprod}
	\big(1-T_xT_x^\ast\big)(f)([u])=
	\begin{cases}
	0 &\mbox{ if } [u]\in x\der \moM^\Gamma,\\
	f([u]) &\mbox{ otherwise.}
	\end{cases}
	\end{equation}
	Let $u=u_1\cdots u_r$, $u_j\in\partial\moM^{\Gamma_j}$. Then there
	exists $y\in V_i$ such that $u_i\in y\partial\moM^{\Gamma_i}$. Hence, by 
	\eqref{eq:valprod}
	\begin{equation}
	\big(1-T_y T_y^\ast\big)(f)([u])=0.
	\end{equation}
	Hence $\be_i(f)=0$ and this yields the claim.
	\end{proof}


\section{Fractals}
\label{ss:fract}
Let $\moM$ be a finitely $1$-generated monoid.
By an {\emph{$\moM$-fractal}} we will understand a complete metrix space $(X,d)$
with a contracting left $\moM$-action $\alpha\colon \moM\longrightarrow C(X,X)$, 
i.e., there exists a real number $\delta<1$ such that for all $x,y\in X$ and all $\omega\in
\moM\setminus\{1\}$ one has
\begin{equation}
\label{eq:cont1}
d(\alpha(\omega)(x),\alpha(\omega)(y))<\delta\cdot d(x,y).
\end{equation}
The real number $\delta$ will be called the contraction constant.
To the authors knowledge the following important question has not yet been discussed in the literature.

\begin{ques}
\label{ques:fract} For which finitely $1$-generated monoids $\moM$ does there exist an
$\moM$-fractal $(X,d,\alpha)$.
\end{ques}

\subsection{The action of the universal boundary on an $\moM$-fractal}
\label{ss:actuni}
For a strictly decreasing sequence $f\in\cuD(\N,\moM,\preceq)$ and for $n,m\in\N_0$, 
$m> n$,
there exists $\tau_{m,n}\in \moM\setminus\{1\}$ such that $f(m)=f(n)\cdot\tau_{m,n}$. 
By induction, one concludes
that $|f(n)|\geq n$. If $[f]\in\partial \moM$, then $f$ can be represented by a strictly
decreasing sequence (cf. Fact~\ref{fact:strict}).

As $\alpha$ is contracting, one concludes that $(\alpha(f_n)(x))$ is a Cauchy sequence
for every strictly decreasing sequence $f\in\cuD(\N,M,\preceq)$
and thus has a limit point $\alpha(f)(x)=\lim_{n\to\infty}(\alpha(f_n)(x))$. In more detail,
if $\alpha$ has contracting constant $\delta<1$,
 one has 
for $n,m\in\N$, $m> n$, that  
\begin{equation}
\label{eq:contt2}
d(\alpha(f(m))(x),\alpha(f(n))(x))<\delta^{|f(n)|}\cdot d(\alpha(\tau_{m,n})(x),x)\leq
\delta^{|f(n)|}\cdot\diam(X),
\end{equation}
where $\diam(X)=\max\{\,d(y,z)\mid x,y\in X\,\}$.
Thus one has a map
\begin{equation}
\label{eq:baMact}
\argu\cdot\argu\colon\cuD(\N,\moM,\prec)\times X\longrightarrow X
\end{equation}
given by $[f]\cdot x=\alpha(f)(x)$. This map has the following property.

\begin{prop}
\label{prop:frac}
Let $\moM$ be a finitely $1$-generated monoid, and let $((X,d),\alpha)$ be an
$\moM$-fractal with attractor $K\subseteq X$ . Then the map \eqref{eq:baMact} is continuous and $[f]\cdot x\in K$ for all $f\in\cuD(\N,\moM,\prec)$
and $x\in X$.
\end{prop}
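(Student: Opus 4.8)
The plan is to establish two things: first, that the value $[f]\cdot x$ always lands in the attractor $K$, and second, that the assignment $([f],x)\mapsto[f]\cdot x$ is jointly continuous. For the first claim I would argue as follows. Given a strictly decreasing sequence $f$ representing $[f]\in\partial\moM$, we have $\alpha(f)(x)=\lim_{n\to\infty}\alpha(f(n))(x)$ by the Cauchy estimate \eqref{eq:contt2}. Each $\alpha(f(n))$ is a map with a unique fixed point $x_{f(n)}\in\Fix(\alpha(f(n)))$, and by contractivity $d(\alpha(f(n))(x),x_{f(n)})\leq\delta^{|f(n)|}\diam(X)\to 0$. Hence the limit $\alpha(f)(x)$ coincides with $\lim_n x_{f(n)}$, a limit of fixed points, which lies in $\cl(\{\Fix(\alpha(t))\mid t\in\moM\})=K$ by property (2) of the attractor. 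Since $K$ is closed this limit belongs to $K$, giving $[f]\cdot x\in K$ independently of the chosen starting point $x$.

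For continuity I would separate the two variables. Continuity in $x$ for fixed $[f]$ is immediate: the estimate $d(\alpha(f(n))(x),\alpha(f(n))(y))\leq\delta^{|f(n)|}d(x,y)$ passes to the limit to give $d([f]\cdot x,[f]\cdot y)\leq 0$, so in fact $\alpha(f)$ is a \emph{constant} map in $x$ (its image is the single point $\lim_n x_{f(n)}$), which is trivially continuous. The genuine content is therefore continuity in the $[f]$ variable with respect to the cone topology $\caT_c(\bmoM)$. Here I would fix a basic open neighborhood of $[f]\cdot x$ in $K$, say a ball $B(\alpha(f)(x),\eps)$, choose $n$ with $\delta^{|f(n)|}\diam(X)<\eps/2$, and observe that any $[g]$ lying in the cone $\cC_{f(n)}(\bmoM)\cap\partial\moM$ — i.e. with $g\succeq f(n)$, meaning $g$ factors through $f(n)$ — satisfies $\alpha(g)(x)\in\alpha(f(n))(K)$, which has diameter at most $\delta^{|f(n)|}\diam(X)<\eps/2$ and contains $\alpha(f)(x)$. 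Since $\cC_{f(n)}(\bmoM)$ is a basic open set in the cone topology by \eqref{eq:cone2}, this exhibits an open neighborhood of $[f]$ mapping into the prescribed ball, proving continuity.

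The main obstacle I anticipate is the precise bookkeeping connecting the cone topology on $\partial\moM$ to the geometric contraction. Two subtleties need care. First, one must ensure the argument is independent of the representative: a class $[f]\in\partial\moM$ may be represented by different decreasing sequences, so I would verify that $\alpha(f)(x)$ depends only on $[f]$ by comparing tails — two equivalent sequences have mutually cofinal images under $\preceq$, and the contraction estimate forces their limits to agree. Second, the relation $g\succeq f(n)$ must be translated correctly into the factorization $g(k)=f(n)\cdot\sigma_k$ that lets me write $\alpha(g(k))(x)=\alpha(f(n))(\alpha(\sigma_k)(x))\in\alpha(f(n))(X)$, whence the diameter bound. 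Once these two points are handled cleanly, the remaining steps are the routine Cauchy and telescoping estimates already recorded in \eqref{eq:contt2}.
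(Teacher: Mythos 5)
Your proposal is correct, but for the membership claim $[f]\cdot x\in K$ you take a genuinely different route from the paper. The paper invokes Hutchinson's convergence theorem: setting $A=\{x\}$ and $\euS(A)=\bigcup_{\sigma\in\moM_1}\alpha(\sigma)(A)$, the iterates $\euS^k(A)$ converge to $K$ in the Hausdorff metric, so $\alpha(f(n))(x)$ is eventually $\eps$-close to $K$, making the limit a cluster point of the closed set $K$. You instead use characterization (2) of the attractor: from $d(\alpha(f(n))(x),x_{f(n)})\le\delta^{|f(n)|}\diam(X)\to 0$ you identify $\alpha(f)(x)$ with $\lim_n x_{f(n)}$, a limit of fixed points, which lies in $\cl(\{\Fix(\alpha(t))\mid t\in\moM\})=K$. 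Your argument is self-contained (it needs only property (2), which the paper lists as defining the attractor, rather than an external convergence theorem), and it yields as a by-product that $\alpha(f)$ is \emph{constant} in $x$ --- a fact the paper never states explicitly, but which makes joint continuity of \eqref{eq:baMact} follow from continuity in the $[f]$-variable alone, whereas the paper only asserts continuity in the second argument is ``obvious''. For the $[f]$-variable your argument is essentially the paper's: the paper's estimate $d(\alpha(f)(x),\alpha(h)(x))\le 2\cdot\delta^{|\tau|}\cdot\diam(X)$ for $f,h\prec\tau$ is the same diameter bound you derive for $\cl(\alpha(f(n))(X))$, with your version spelling out which basic cone-open sets witness continuity.

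Two small corrections. First, membership $[g]\in\cC_{f(n)}(\bmoM)$ means $g\preceq f(n)$ in the paper's convention (the cone $\cC_\omega$ consists of the elements $\preceq\omega$), and the factorization $g(k)=f(n)\cdot\sigma_k$ is precisely the statement $g(k)\preceq f(n)$; you write $g\succeq f(n)$ for this, which is the reverse relation. Your computation uses the correct factorization, so nothing breaks, but the inequality as written contradicts the factorization standing next to it. Second, since $\moM$ need not be left cancellative and $X$ need not be compact, the points $\alpha(\sigma_k)(x)$ need not converge, so one can only conclude $\alpha(g)(x)\in\cl(\alpha(f(n))(X))$ rather than $\alpha(g)(x)\in\alpha(f(n))(K)$; the diameter bound you actually use is unaffected by taking closures.
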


\begin{proof}
Let $f\in\cuD(\N,X,\prec)$ be a 
strictly decreasing function.
For $A=\{x\}$, and $\euS(A)=\bigcup_{\sigma\in \moM_1} \sigma(A)$, the sequence
$\euS^k(A)$
converges to $K$ in the Hausdorff metric (cf. \cite[Statement (1)]{hutch:frac}). Thus 
for all $\eps>0$ there exists $N(\eps)\in\N$ such that for all $n>N(\eps)$
$\eud(\euS^n(A),K)<\eps$, where $\eud$ denotes the Hausdorff metric
(cf. \cite[(2.4)]{hutch:frac}). Hence $d(\alpha(f(n))(x),K)<\eps$ for all $n>N(\eps)$, and
$\alpha(f)(x)$ is a clusterpoint
of $K$. As $K$ is closed this implies $\alpha(f)(x)\in K$.

The map \eqref{eq:baMact} is obviously continuous in the second argument. Moreover, let $f,h\in\cuD(\N,X,\prec)$, $f, h\prec \tau$. Then
\begin{equation}
\label{eq:stetig1}
d(\alpha(f)(x)),\alpha(h)(x))\leq 2\cdot\delta^{|\tau|}\cdot\diam(X).
\end{equation}
Thus \eqref{eq:baMact} is continuous.
\end{proof}

\begin{prop}
\label{prop:leq}
Let $f,h\in\cuD(\N,\moM,\prec)$ satisfying $f\preceq h$. Then,
$\alpha(f)(x)=\alpha(h)(x)$.
\end{prop}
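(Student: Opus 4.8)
The plan is to compare the two limit points $\alpha(f)(x)=\lim_n\alpha(f(n))(x)$ and $\alpha(h)(x)=\lim_n\alpha(h(n))(x)$ term by term, exploiting the relation $f\preceq h$ together with the fact that $\alpha\colon\moM\to C(X,X)$ is a homomorphism of monoids. Concretely, $\alpha(\omega\cdot\sigma)=\alpha(\omega)\circ\alpha(\sigma)$ and $\alpha(\omega)$ contracts distances by the factor $\delta^{|\omega|}$; this is exactly the mechanism behind the estimate \eqref{eq:contt2}. Both limits exist as limits of Cauchy sequences, so the goal reduces to showing $d(\alpha(f)(x),\alpha(h)(x))=0$.

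First I would unfold the hypothesis $f\preceq h$ by means of \eqref{eq:rel}: for every $n\in\N$ there is an index $k_n\in\N$ with $f(k_n)\preceq h(n)$, i.e.\ there is $\sigma_n\in\moM$ such that $f(k_n)=h(n)\cdot\sigma_n$. The key estimate is then
\begin{equation*}
d\big(\alpha(f(k_n))(x),\alpha(h(n))(x)\big)
=d\big(\alpha(h(n))(\alpha(\sigma_n)(x)),\alpha(h(n))(x)\big)
\le\delta^{|h(n)|}\,d(\alpha(\sigma_n)(x),x)
\le\delta^{|h(n)|}\,\diam(X),
\end{equation*}
where the equality uses $\alpha(f(k_n))=\alpha(h(n))\circ\alpha(\sigma_n)$ and the first inequality uses that $\alpha(h(n))$ contracts by $\delta^{|h(n)|}$. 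Since $h$ is strictly decreasing we have $|h(n)|\ge n$, so the right-hand side tends to $0$ as $n\to\infty$.

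It then remains to pass to the limit via the triangle inequality
\begin{equation*}
d(\alpha(f)(x),\alpha(h)(x))\le d(\alpha(f)(x),\alpha(f(k_n))(x))+d(\alpha(f(k_n))(x),\alpha(h(n))(x))+d(\alpha(h(n))(x),\alpha(h)(x)).
\end{equation*}
The middle term vanishes in the limit by the estimate above, and the last term vanishes by the very definition of $\alpha(h)(x)$. The only delicate point — and the one real obstacle — is the first term: to get $d(\alpha(f)(x),\alpha(f(k_n))(x))\to 0$ I must know that $k_n\to\infty$, since a priori the indices furnished by \eqref{eq:rel} need be neither monotone nor unbounded. This follows because $|f(k_n)|=|h(n)|+|\sigma_n|\ge|h(n)|\to\infty$; were $k_n$ to stay bounded along some subsequence, the corresponding values $|f(k_n)|$ would be bounded, a contradiction. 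Hence $k_n\to\infty$, all three terms tend to $0$, and we conclude $\alpha(f)(x)=\alpha(h)(x)$. I note that this argument uses only the existence of the limits and the uniform contraction estimate, not the membership $\alpha(f)(x)\in K$ from Proposition~\ref{prop:frac}.
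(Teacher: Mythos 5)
Your proof is correct and follows essentially the same route as the paper: the core estimate $d\big(\alpha(f(k_n))(x),\alpha(h(n))(x)\big)\le\delta^{|h(n)|}\diam(X)$, obtained from the homomorphism property and the contraction constant, is exactly the mechanism the paper invokes via \eqref{eq:contt2}. The only difference is presentational: the paper opens with ``we may assume $f(n)\preceq h(n)$ for all $n$'' (a tacit passage to a subsequence of $f$, which leaves the limit unchanged), whereas you keep the indices $k_n$ explicit and verify $k_n\to\infty$ --- precisely the bookkeeping that the paper's reduction hides.
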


\begin{proof}
We may assume that $f(n)\preceq h(n)$  for all $n\in\N$, i.e., there exists $y_n\in \moM$
such that $f(n)=h(n)\cdot y_n$. Then, by the same argument which was used for \eqref{eq:contt2}, one concludes that
\begin{equation}
\label{eq:cont3}
d(\alpha(f_n)(x),\alpha(h(n)(x))\leq \delta^{|h(n)|}\diam(X)\leq\delta^n\diam(X).
\end{equation}
This yields the claim.
\end{proof}

From Proposition~\ref{prop:leq} one concludes that the map
\eqref{eq:baMact} induces an action
\begin{equation}
\label{eq:boact}
\argu\cdot\argu\colon\partial\moM\times X\longrightarrow X.
\end{equation}

\begin{prop}
\label{prop:uniprop}
Let $x\in X$, and let $K\subset X$ be the attractor of the $\moM$-fractal
$((X,d),\alpha)$. Then the induced map
\begin{equation}
\label{eq:contt4}
\kappa_x\colon\partial\moM\longrightarrow K
\end{equation}
given by $\kappa_x([f])=\alpha(f)(x)$ is surjective.
\end{prop}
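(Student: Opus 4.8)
The plan is to prove surjectivity directly, by coding each point of the attractor $K$ as an infinite word in the generating set $\moM_1$ and then checking that $\kappa_x$ sends the boundary class of that word to the prescribed point. No compactness or $\caT$-regularity will be needed; the whole argument rests on the self-similarity property~(1) of $K$ together with the contraction estimate.

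First I would fix $k\in K$ and build a preimage. Since $\moM$ is finitely $1$-generated, $\moM_1$ is finite, and property~(1), $K=\bigcup_{s\in\moM_1}\alpha(s)(K)$, says precisely that $\{\alpha(s)\mid s\in\moM_1\}$ is a finite iterated function system with attractor $K$. Hence every point of $K$ lies in $\alpha(s)(K)$ for some $s\in\moM_1$, and iterating this (dependent choice) produces a sequence $s_1,s_2,\dots\in\moM_1$ together with points $k=k_0,k_1,k_2,\dots\in K$ satisfying $k_{i-1}=\alpha(s_i)(k_i)$ for all $i$. Putting $f(n)=s_1\cdots s_n$ yields a strictly decreasing $f\in\cuD(\N,\moM,\prec)$ with $|f(n)|=n$, so its image is infinite and therefore $[f]\in\partial\moM$; moreover $k=\alpha(f(n))(k_n)$ for every $n$ by construction.

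Next I would verify $\kappa_x([f])=k$. By definition $\kappa_x([f])=\alpha(f)(x)=\lim_n\alpha(f(n))(x)$, the limit existing by the Cauchy estimate established before Proposition~\ref{prop:frac}. Using the contraction constant $\delta<1$ and the composition $\alpha(f(n))=\alpha(s_1)\circ\cdots\circ\alpha(s_n)$, together with $k=\alpha(f(n))(k_n)$, I would estimate
\[
d\big(\alpha(f(n))(x),k\big)=d\big(\alpha(f(n))(x),\alpha(f(n))(k_n)\big)\le\delta^{\,n}\,d(x,k_n).
\]
Because $K$ is compact, $\sup_n d(x,k_n)\le\sup_{y\in K}d(x,y)<\infty$, so the right-hand side tends to $0$; hence $\alpha(f(n))(x)\to k$, and by uniqueness of limits $\alpha(f)(x)=k$. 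Since $k\in K$ was arbitrary and $\kappa_x$ takes values in $K$ by Proposition~\ref{prop:frac}, this gives surjectivity.

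The main obstacle, I expect, is the subtle double role played by the word $f(n)=s_1\cdots s_n$: the value $\alpha(f)(x)$ is the limit of the forward iterates of the arbitrary basepoint $x$, while the target $k$ is recovered as $\alpha(f(n))(k_n)$ from the matching tails $k_n$. Reconciling these two descriptions is exactly what the contraction estimate does, and it is what shows the basepoint $x$ becomes irrelevant in the limit. As a cross-check, one could instead realize only the dense set of fixed points $x_t=\kappa_x([t^\infty])$, $t\in\moM\setminus\{1\}$, via Banach's fixed point theorem and property~(2), and then close up using the continuity of $\kappa_x$ together with the compactness of $(\partial\moM,\caT_f(\bmoM))$ from Theorem~\ref{thm:princ}; but the direct coding argument above is both cleaner and unconditional.
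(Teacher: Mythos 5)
Your proof is correct, and it takes a genuinely different route from the paper's. The paper argues by approximation plus compactness: given $z\in K$, it invokes Hutchinson's convergence of $\euS^n(\{x\})$ to $K$ in the Hausdorff metric to produce words $f_n$ of length $n$ with $\alpha(f_n)(x)$ close to $z$, then assumes $\moM$ is $\caT$-regular so that $(\bmoM,\caT_c(\bmoM))$ is compact (Proposition~\ref{prop:Treg}), extracts a cluster point $f\in\partial\moM$ of the sequence $(f_n)$, and leaves the identity $[f]\cdot x=z$ as ``straightforward to verify''. You instead code the target point directly: self-similarity $K=\bigcup_{s\in\moM_1}\alpha(s)(K)$ produces letters $s_i$ and points $k_n\in K$ with $k=\alpha(s_1\cdots s_n)(k_n)$, and the contraction estimate $d\big(\alpha(f(n))(x),k\big)\le\delta^n\sup_{y\in K}d(x,y)\to 0$ identifies the limit $\kappa_x([f])$ with $k$. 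This buys two real advantages. First, your argument is unconditional: it proves the proposition exactly as stated, with no $\caT$-regularity hypothesis, whereas the paper's proof only establishes the conditional statement of Theorem~C (the hypothesis appears in the proof but not in the proposition). Second, it is tighter on details: your prefixes $f(n)=s_1\cdots s_n$ grow by right multiplication, so they genuinely form a strictly decreasing element of $\cuD(\N,\moM,\preceq)$ whose limit $\alpha(f)(x)$ exists by the Cauchy estimate \eqref{eq:contt2}, while the paper's recursion $f_{n+1}=\sigma\cdot f_n$ prepends generators on the left and hence does not even yield a $\preceq$-decreasing sequence; likewise your bound $\sup_{y\in K}d(x,y)<\infty$, valid since $K$ is compact, replaces the paper's use of $\diam(X)$, which is finite only when $X$ itself is bounded. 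What the paper's route would buy, if completed, is a reusable mechanism (approximate inside a compact $\bmoM$, then pass to a cluster point) that does not require exhibiting explicit preimages; but for this statement your coding argument is cleaner, and it is the one that actually closes the proof.
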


\begin{proof}
Let $z\in K$, and $A=\{x\}$. By (cf. \cite[(2.4)]{hutch:frac}), for all $\eps>0$ there exists $N(\eps)\in\N$ such that for all $n>N(\eps)$
$\eud(\euS^n(A),z)<\eps$, i.e., there exists a sequence $(f_n)_{n\in\N}$, $f_n\in\moM_n$,
$f_{n+1}\in\bigcup_{\sigma\in\moM_1} \{\sigma\cdot f_n\}$, such that
$d(\alpha(f_n)(x),z)<\eps$. 

If $\moM$ is $\caT$-regular, then $(\bmoM,\caT_c(\bmoM))$ is compact (cf. Proposition~\ref{prop:Treg}). Hence $(f_n)_{n\in\N}$ has a cluster point $f\in\bmoM$.
As $|f_n|=n$, one has $f\not\in\moM$ and thus $f\in\partial\moM$. It is straightforward to verify that  $[f]\cdot x=z$, showing that $\kappa_x$ is surjective. 
\end{proof}

From Proposition~\ref{prop:leq} one concludes that the  $\partial \moM$-action on $X$ (cf. \eqref{eq:baMact}) induces a $\wtM$-action
\begin{equation}
\label{eq:wtMact}
\argu\cdot\argu\colon\wtM\times X\longrightarrow X
\end{equation}
given by $\pi([f])\cdot x=\alpha(f)(x)$ (cf. \eqref{eq:projtilde}).

\subsection{The $C^\ast$-algebra associated to an $\moM$-fractals for a finitely $1$-generated monoid $\moM$}
\label{ss:Mfrac}
Let $\moM$ be a finitely $1$-generated monoid, and let $((X,d),\alpha)$
be an $\moM$-fractal with atrractor $K$. For $x\in X$ there exists a continuous
mapping $\kappa_x\colon\der\moM\to K$ (cf. Theorem C). Let
$\mu_x\colon\Bor(K)\to\R^+_0$ be the probability measure given by
\eqref{eq:kont}. Then $\moM$ acts on $K$, and thus also on $L^2(K,\C,\mu_x)$.

For $t\in\moM$  let $\gamma_t\colon L^2(K,\C,\mu_x)\to L^2(K,\C,\mu_x)$ be given by
\begin{equation}
	\gamma_t(g)(x)=g(\alpha_t(x))
\end{equation}
where $g\in L^2(K,\C,\mu_x)$.
Hence the monoid $\moM$ acts on the Hilbert space 
$L^2(K,\C,\mu_x)$ by bounded linear operators.
\begin{equation*}
	\|\gamma_t(g)\|_2^2=\int_K |\gamma_t(g(z))|^2\,d\mu_x
	=\int_K|g(\alpha_t(z))|^2\,d\mu_x\le\|g\|_2^2
\end{equation*}
(cf. \S~\ref{ss:boundT}). One defines the $C^\ast$-algebra generated
by the $\moM$-fractal $((X,d),\alpha)$ by
\begin{equation}
\label{eq:defCstarfrak}
C^\ast(\moM,X,d,\mu_x)=\langle\,\gamma_t,\,\gamma_t^\ast\mid t\in\moM\,\rangle\subseteq
\caB(L^2(K,\C,\mu_x)).
\end{equation}



\bibliography{monoid}
\bibliographystyle{amsplain}

\end{document}